\newtheorem{theorem}{Theorem}[section]
\newtheorem{lemma}[theorem]{Lemma}
\newtheorem{corollary}[theorem]{Corollary}
\newtheorem{definition}[theorem]{Definition}
\newtheorem{proposition}[theorem]{Proposition}
\newtheorem{example}[theorem]{Example}
\newtheorem{remark}[theorem]{Remark}
\DeclareMathOperator{\GammaL}{\Gamma\mathrm{L}}
\newcommand{\fqn}{\mathbb{F}_{q^n}}
\newcommand{\fq}{\mathbb{F}_{q}}
\newcommand{\fqk}{\mathbb{F}_{q^k}}
\newcommand{\F}{{\mathbb F}}
\newcommand{\lmb}{\lambda}
\renewcommand{\mod}{\hbox{{\rm mod}\,}}
\newcommand{\N}{\mathrm{N}}
\begin{document}
\title{On generalized Sidon spaces}

\author[C. Castello]{Chiara Castello}
\address{Chiara Castello, \textnormal{Dipartimento di Matematica e Fisica, Universit\`a degli Studi della Campania ``Luigi Vanvitelli'', Viale Lincoln, 5, I--\,81100 Caserta, Italy}}
\email{chiara.castello@unicampania.it}

\maketitle   

\begin{abstract}
Sidon spaces have been introduced by Bachoc, Serra and Zémor as the $q$-analogue of Sidon sets, classical combinatorial objects introduced by Simon Szidon. In 2018 Roth, Raviv and Tamo introduced the notion of $r$-Sidon spaces, as an extension of Sidon spaces, which may be seen as the $q$-analogue of $B_r$-sets, a generalization of classical Sidon sets. Thanks to their work, the interest on Sidon spaces has increased quickly because of their connection with cyclic subspace codes they pointed out. This class of codes turned out to be of interest since they can be used in random linear network coding.
In this work we focus on a particular class of them, the one-orbit cyclic subspace codes, through the investigation of some properties of Sidon spaces and $r$-Sidon spaces, providing some upper and lower bounds on the possible dimension of their \textit{r-span} and showing explicit constructions in the case in which the upper bound is achieved. Moreover, we provide further constructions of $r$-Sidon spaces, arising from algebraic and combinatorial objects, and we show examples of $B_r$-sets constructed by means of them.
\end{abstract}

{\textbf{Keywords}: Sidon space; cyclic subspace code; linearized polynomial; generalized Sidon space }\\

{\textbf{MSC2020}: Primary 11T99; 11T06. Secondary   11T71; 94B05.}

\section{Introduction}
Sidon spaces have been originally introduced by Bachoc, Serra and Zémor in \cite{BSZ2015}, in relation with the linear analogue of Vosper's Theorem (see also \cite{BSZ2018,HouLeungXiang2002}) as the $q$-analogue of Sidon sets, classical combinatorial objects introduced by Simon Szidon.\\
An $\fq$-subspace $V$ of $\fqn$ is called a \textbf{Sidon space} if the products of any two nonzero elements of $V$ are uniquely determined, up to a multiplicative factor in $\fq$, i.e. for all nonzero $a,b,c,d \in V$ such that $ab=cd$ then 
\[\{a \F_q,b \F_q\}=\{c \F_q,d \F_q\},\]
where $e\fq =\{e\lambda \colon \lambda \in \fq\}$. This can be naturally seen as the $q$-analog of Sidon sets originally introduced by Erd\H{o}s in \cite{Erdos}. A subset $S$ of an abelian group $(G,+)$ is a Sidon set if the sums of any pairs of elements (possibly identical) are distinct, i.e. if for all $a,b,c,d\in S$ such that $a+b=c+d$ then
\[
\lbrace a, b\rbrace=\lbrace c, d\rbrace.
\]

The interest on Sidon spaces has increased quickly especially after the work of Roth, Raviv and Tamo because in \cite{Roth} they pointed out the connection of Sidon spaces with \textbf{cyclic subspace codes}.\\
Let $k$ be a non-negative integer with $k \leq n$, the set of all $k$-dimensional $\F_q$-subspaces of $\F_{q^n}$, viewed as $\F_{q}$-vector space, forms a \textbf{Grassmannian space} over $\F_q$, which is denoted by $\mathcal{G}_{q}(n,k)$. A \textbf{constant dimension subspace code} is a subset $C$ of $\mathcal{G}_{q}(n,k)$ endowed with the metric defined as follows \[d(U,V)=\dim_{\F_q}(U)+\dim_{\F_q}(V)-2\dim_{\F_q}(U \cap V),\]
where $U,V \in C$. This metric is also known as \textbf{subspace metric}.
As usual, we define the \textbf{minimum distance} of $C$ as
\[ d(C)=\min\{ d(U,V) \colon U,V \in C, U\ne V \}. \]
Subspace codes have been recently used for the error correction in random
network coding, see \cite{KoetterK}. 
The first class of subspace codes studied was the one introduced in \cite{Etzion}, which is known as \textbf{cyclic subspace codes}.
A subspace code $C \subseteq \mathcal{G}_q(n,k)$ is said to be \textbf{cyclic} if for every $\alpha \in \F_{q^n}^*$ and every $V \in C$ then $\alpha V \in C$. If we denote by
\[
\mathrm{Orb}(V)=\lbrace \alpha V\colon \alpha\in\fqn^*\rbrace.
\]
and $C$ coincides with $\mathrm{Orb}(V)$, for some subspace $V$ of $\fqn$, then we say that $C$ is a \textbf{one-orbit} cyclic subspace code and $V$ is said to be a \textbf{representative} of the orbit. In this case, we define the \textbf{dimension} of $C$ as the dimension of $V$, since all the subspaces in $C$ have the same dimension.\\
There are constructions of cyclic subspace codes for several sets of parameters (close to the optimality, in a sense that we will explain soon), see e.g. \cite{code2,code1,code3,SantZullo1,zhang2022constructions,
zhang2023constructions,zhang2023new,
zhang2023further,Zullo}.
Let $V$ be an $\fq$-subspace of $\fqn$ of dimension $k$. Then the maximum field of linearity of $V$ is the maximum  subfield $\mathbb{F}_{q^t}$ of $\fqn$ such that $V$ is closed under multiplication by a scalar in $\mathbb{F}_{q^t}$. In particular it can be proved that $|\mathrm{Orb}(V)|=\frac{q^n-1}{q^t-1}$ if and only if $\mathbb{F}_{q^t}$ is the maximum field of linearity of $V$; see \cite[Theorem 1]{Otal}.
Therefore, every orbit of a subspace $V \in \mathcal{G}_q(n,k)$ defines a cyclic subspace code of size $(q^n-1)/(q^t-1)$, for some $t \mid n $. Let $V$ be a strictly $\fq$-linear subspace of dimension $k$ of $\fqn$. Then the cyclic subspace code defined by $V$ has size $(q^n-1)/(q-1)$. Also, in such a case, the maximum value for the minimum distance is at most $2k$ and it is exactly $2k$ if and only if the orbit of $V$ is a $k$-spread of $\F_{q^n}$, which happens if and only if $k=1$, since $V$ is strictly $\fq$-linear. So, when $k>1$

\[ d(\mathrm{Orb}(V))\leq 2k-2. \]

In \cite{Trautmann} the authors conjectured the existence of a cyclic subspace code of size $\frac{q^n-1}{q-1}$ in $\mathcal{G}_q(n,k)$ and minimum distance $2k-2$ for every positive integers $n,k$ such that $1<k\leq n/2$.

In \cite{BEGR} the authors used subspace polynomials to construct cyclic subspace codes with size $\frac{q^n-1}{q-1}$ and minimum distance $2k-2$, proving that the conjecture is true for any given $k$ and infinitely many values of $n$ and $q$. Such result was then improved in \cite{Otal}. 
Finally, the conjecture was solved in \cite{Roth} for most of the cases, by making use of Sidon spaces.

Roth, Raviv and Tamo in \cite{Roth} pointed out that the study of cyclic subspace codes with size $\frac{q^n-1}{q-1}$ and minimum distance $2k-2$ is equivalent to the study of Sidon spaces. 

\begin{theorem}\cite[Lemma 34]{Roth}\label{lem:charSidon}
Let $U \in \mathcal{G}_q(n,k)$.
Then $U$ is a Sidon space if and only if  $|\mathrm{Orb}(U)|=\frac{q^n-1}{q-1}$ and $d(\mathrm{Orb}(U))=2k-2$, i.e. for all $\alpha \in \mathbb{F}_{q^n}\setminus \fq$
\[\dim_{\fq}(U\cap \alpha U)\leq 1.\]
\end{theorem}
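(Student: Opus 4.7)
The plan is to prove both implications by translating the Sidon relation $ab=cd$ into a statement about the intersection $U\cap \alpha U$ via the substitution $\alpha=a/c=d/b$. Once that dictionary is set up, each direction reduces to a short case analysis.

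For the forward implication, assume $U$ is a Sidon space. I would argue by contradiction: suppose there is some $\alpha\in\fqn\setminus\fq$ with $\dim_{\fq}(U\cap\alpha U)\geq 2$, and pick $\fq$-linearly independent $a,b\in U\cap\alpha U$. Write $a=\alpha c$ and $b=\alpha d$ with $c,d\in U$ (both nonzero since $a,b$ are), and note that $ad=\alpha cd=cb$. Applying the Sidon property to $a,d,c,b\in U\setminus\{0\}$ gives $\{a\fq,d\fq\}=\{c\fq,b\fq\}$, which splits into two cases. If $a\fq=c\fq$, then $a=\lambda c$ with $\lambda\in\fq^*$, and comparing with $a=\alpha c$ forces $\alpha=\lambda\in\fq$, a contradiction. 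If instead $a\fq=b\fq$, then $a$ and $b$ are $\fq$-proportional, contradicting the chosen independence. Hence $\dim_{\fq}(U\cap\alpha U)\leq 1$ for every $\alpha\notin\fq$. For $k\geq 2$ this already forces $\alpha U\neq U$ whenever $\alpha\notin\fq$, so $\fq$ is the maximal field of linearity and $|\mathrm{Orb}(U)|=(q^n-1)/(q-1)$. To upgrade the distance bound to equality, I would exhibit a witness: pick any $\fq$-linearly independent $a,b\in U$, set $\alpha=b/a\notin\fq$, and observe $b=\alpha a\in U\cap\alpha U$, so this intersection has dimension exactly $1$ and the corresponding subspace distance equals $2k-2$.

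For the backward implication, assume $\dim_{\fq}(U\cap\alpha U)\leq 1$ for every $\alpha\in\fqn\setminus\fq$, and take nonzero $a,b,c,d\in U$ with $ab=cd$. Define $\alpha=a/c$; since $ab=cd$, this coincides with $d/b$. If $\alpha\in\fq$, then $a\fq=c\fq$ and $d\fq=b\fq$, and the desired equality of unordered pairs is immediate. Otherwise, $a=\alpha c$ and $d=\alpha b$ both lie in $U\cap\alpha U$, so by hypothesis they are $\fq$-proportional; writing $a=\mu d$ with $\mu\in\fq^*$ and substituting back into $ab=cd$, the nonzero $d$ cancels to give $c=\mu b$. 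Therefore $a\fq=d\fq$ and $b\fq=c\fq$, so $\{a\fq,b\fq\}=\{c\fq,d\fq\}$.

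I do not anticipate a real obstacle: the argument is essentially symmetric, turning on the single observation that a two-dimensional excess in $U\cap\alpha U$ produces the same Sidon-type relation that $\alpha\notin\fq$ is supposed to preclude. The only point requiring care is the distance-\emph{equality} half of the forward direction, where one must produce a pair achieving $d=2k-2$ rather than merely bound $d$ from below; picking $\alpha=b/a$ for independent $a,b\in U$ handles this cleanly whenever $k\geq 2$. The case $k=1$ is either vacuous or can be mentioned separately, since every one-dimensional subspace is trivially Sidon.
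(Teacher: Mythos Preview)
Your argument is correct; the dictionary $\alpha=a/c=d/b$ between the Sidon relation $ab=cd$ and membership in $U\cap\alpha U$ is exactly the right translation, and both directions go through as you describe (including the witness $\alpha=b/a$ to pin down the minimum distance at exactly $2k-2$ when $k\geq 2$). Note, however, that the paper does not supply its own proof of this statement: it is quoted as \cite[Lemma~34]{Roth} without argument, so there is nothing in the present paper to compare your proof against. For what it is worth, your approach is the standard one and coincides with the proof given in Roth--Raviv--Tamo.
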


Moreover, the condition $k\leq n/2$ in the aforementioned conjecture may be obtained as a consequence of the following theorem. To this aim we need the following notation: if $V$ is an $\fq$-subspace of $\fqn$ then
\[ V^2=\langle ab \colon a,b \in V\rangle_{\fq} \]
and $V^2$ is said to be the \textbf{square-span} of V.

\begin{theorem}\cite[Theorem 18]{BSZ2015} \label{lowerboundSidon}
Let $V\in\mathcal{G}_q(n,k)$ be a Sidon space of dimension $k\geqslant 3$, then
\[ 
\dim_{\fq}(V^2)\geqslant 2k.
\]
\end{theorem}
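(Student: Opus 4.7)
The plan is to exhibit $2k$ linearly independent elements of $V^2$.

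\emph{Producing $2k-1$ products.} Fix an $\fq$-basis $v_1,\dots,v_k$ of $V$. Because multiplication by any nonzero element of $\fqn$ is an $\fq$-linear injection, each $v_i V \subseteq V^2$ has dimension $k$. The Sidon property gives $v_1 V \cap v_2 V = v_1 v_2 \fq$: from $v_1 x = v_2 y$ with $x, y \in V\setminus\{0\}$ one deduces $\{v_1\fq, x\fq\} = \{v_2\fq, y\fq\}$, and since $v_1, v_2$ are linearly independent this forces $x \in v_2\fq$ and $y \in v_1\fq$. Hence $\dim_{\fq}(v_1 V + v_2 V) = 2k - 1$.

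\emph{Upgrading to $2k$.} Suppose for contradiction that $\dim_{\fq}(V^2) = 2k - 1$, so that $V^2 = v_1 V + v_2 V$. Choose a third basis vector $v_3$ (using $k \geq 3$) and write $v_3^2 = v_1 a + v_2 b$ for some $a, b \in V$. A direct two-term Sidon argument excludes $a = 0$ and $b = 0$, so both are nonzero. Normalizing $a$ to have no $v_2$-component and $b$ no $v_1$-component (using the ambiguity $v_1 V \cap v_2 V = v_1 v_2 \fq$) and transporting the $v_1 v_3$- and $v_2 v_3$-terms to the left produces
\[
v_3 \, u \;=\; v_1 \, p \;+\; v_2 \, q,
\]
where $u = v_3 - \alpha_3 v_1 - \beta_3 v_2 \in V$ is nonzero and linearly independent from $v_1, v_2$, and $p = \alpha v_1 + \sum_{j\ge 4}\alpha_j v_j$, $q = \beta v_2 + \sum_{j\ge 4}\beta_j v_j$ are nonzero elements of $V$ (a further Sidon application rules out $p = 0$ or $q = 0$, each giving a linear dependence among $v_1, v_2, v_3$).

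\emph{The main obstacle.} It remains to contradict the identity $v_3 u = v_1 p + v_2 q$. For $k = 3$ it reduces to $v_3 u = \alpha v_1^2 + \beta v_2^2$ with $\alpha, \beta \in \fq^*$. If $-\beta/\alpha$ is a square $\delta^2$ in $\fq$, then the right-hand side factors as $\alpha(v_1 + \delta v_2)(v_1 - \delta v_2)$ inside $V \cdot V$, and applying Sidon to $v_3 u = \alpha(v_1 + \delta v_2)(v_1 - \delta v_2)$ forces $v_3 \fq$ to coincide with $(v_1 \pm \delta v_2) \fq$, contradicting the linear independence of $v_1, v_2, v_3$. The non-square subcase (and, for $k \geq 4$, the additional cross-terms $v_i v_j$ with $j \geq 4$) resists this direct attack: the binary form $\alpha X^2 + \beta Y^2$ is then $\fq$-anisotropic, so its value on $(v_1,v_2)$ does not factor as a product of two elements of $V$, and the two-term Sidon condition no longer applies. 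To dispatch these cases cleanly I appeal to the linear analogue of Vosper's theorem (Hou--Leung--Xiang \cite{HouLeungXiang2002}, see also \cite{BSZ2018}), which asserts that the equality $\dim_{\fq}(V^2) = 2\dim_{\fq}(V) - 1$ can hold only when $V$ is a ``geometric progression'' contained, up to a nonzero scalar, in a proper subfield of $\fqn$; such a $V$ contains (after scaling) a triple of the form $\{1, \beta, \beta^2\}$, which directly violates the Sidon property via $1 \cdot \beta^2 = \beta \cdot \beta$. This non-square/higher-$k$ subcase is the step I expect to require the most care, and invoking the Vosper/Kneser-type structural result is the most economical route around it.
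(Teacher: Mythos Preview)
The paper does not prove this statement; it is quoted verbatim as \cite[Theorem 18]{BSZ2015} and then used as the base case $s=2$ in Theorems~\ref{thm:lowerbound} and~\ref{thm:lowerboundprime}. So there is no ``paper's own proof'' to compare against, and your attempt must stand on its own.

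Your first step is fine: $v_1V+v_2V$ has dimension $2k-1$ by the Sidon property, so $\dim_{\fq}(V^2)\ge 2k-1$. The gap is in the final appeal to a Vosper-type theorem. Three problems:
\begin{enumerate}
\item The linear Vosper theorem (this is \cite[Theorem~3]{BSZ2015}, restated here as Theorem~\ref{vosper}) requires the extension degree $n$ to be \emph{prime}. The statement you are proving has no such hypothesis, so you cannot invoke it in general. Hou--Leung--Xiang \cite{HouLeungXiang2002} proved the Kneser analogue (Theorem~\ref{thm:analoguekneserthm} here), not Vosper; Kneser only gives $\dim_{\fq}(V^2)\ge 2k-h(V^2)$, which is weaker than what you need.
\item Even for prime $n$, Vosper requires $\dim_{\fq}(V^2)\le n-2$, i.e.\ $2k-1\le n-2$. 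Sidon spaces with $n=2k$ exist (e.g.\ the examples in \cite{Roth}), and there Vosper is silent.
\item The conclusion you attribute to Vosper (``contained, up to a nonzero scalar, in a proper subfield'') is not what it says; it gives a geometric-progression basis $g,ga,\dots,ga^{k-1}$, with no subfield containment. Your subsequent contradiction $1\cdot\beta^2=\beta\cdot\beta$ does follow from the geometric-progression form, so this is a misstatement rather than a fatal error---but it compounds the impression that the Vosper step is being invoked loosely.
\end{enumerate}
In short, the elementary part of your argument reaches $2k-1$ correctly, and the idea of ruling out equality via a structural theorem is exactly what the paper itself does in the prime case (Theorem~\ref{thm:lowerboundprime}). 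But for arbitrary $n$ you need the actual argument from \cite{BSZ2015}, which is not a straightforward corollary of either Kneser or Vosper as stated here.
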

Also, Sidon spaces with the smallest dimension of its square span were used in designing a multivariate public-key cryptosystem (see \cite{CryptoSidon}). This motivates the study of more general bounds on the dimension of the subspace generated by the $r$-products of $V$ (the $r$-span of $V$).\\
In \cite{Roth}, the notion of Sidon space was extended to the notion of $r$-Sidon space (see Section \ref{sec:r-sidon}), similarly to what has been done for the classical Sidon sets.
Indeed, this notion can be seen as the $q$-analog of the classical notion of $B_r$-sets defined in abelian groups; see e.g. \cite{GenSS}.
This property may be used to distinguish cyclic subspace codes, in the spirit of what happens for the square-span and more in general $r$-span of a subspace; see \cite{CPZ202x}, and also its investigation was strongly encouraged by Raviv, Langton and Tamo in \cite{CryptoSidon} in order to extend the hardness of the problem on which their cryptosystem is based.

In this paper we exploit the theory of $r$-Sidon spaces, first by giving some bounds on the dimension of an $r$-Sidon space obtained by evaluating the possible dimension of the $r$-span of an $r$-Sidon space (cf. Section \ref{sec:bounds}). In particular, we prove a lower bound on the dimension of $V^r$ for any $\fq$-subspace $V$ of $\fqn$ such that $1\in V$, which involves the stabilizer of $V^r$, i.e. the subfield of $\fqn$ containing the elements of $\fqn$ that fix $V^r$. Since in the prime degree extensions, the stabilizer of $V^r$ is forced to be $\fq$, except if the stabilizer coincided with $\fqn$, the lower bound on the dimension of $V^r$ can be improved. Moreover, if $k=\dim_{\fq}(V)$, we observe that the dimension of $V^r$ can be at most $\binom{k+r-1}{r}$ and we prove that any $\fq$-subspace $V$ of $\fqn$ such that $\dim_{\fq}(V^r)=\binom{k+r-1}{r}$ is an $r$-Sidon space. This allows us, in Section \ref{sec:maxrsidon}, to show some explicit constructions of $r$-Sidon spaces that reach this upper bound, that we define as \textbf{max-span r-Sidon spaces}, in analogy with the definition given by Roth, Raviv and Tamo in \cite{Roth} in the case $r=2$. Then we also explore constructions of $r$-Sidon spaces arising from scattered polynomials (cf. Section \ref{sec:constr}) and others with the aid of MAGMA computations \cite{MAGMA} (cf. Section \ref{sec:computat}).
We conclude the paper with Section \ref{sec:Brsets} where we show some examples of $B_r$-sets that can be constructed from our examples of $r$-Sidon spaces.

\section{Preliminaries}\label{sec:r-sidon}\label{sec:recall}

In the Introduction we mentioned that the definition of Sidon space was originally introduced in \cite{BSZ2015}.
A generalization of this concept has been given in \cite{Roth}, as the $q$-analogue of $B_r$-sets, where an $r$-Sidon space is defined as follows. In this section, we resume what is known about $r$-Sidon spaces, some results and examples of them and we will recall the definition of equivalence for Sidon spaces arising from the equivalence of subspace codes.

\begin{definition}
Let $k,n,r \in \mathbb{N}$ with $k<n$ and $r\geq 2$. A subspace $V \in \mathcal{G}_{q}(n,k)$ is said to be an \textbf{$r$-Sidon space} if for every nonzero $a_1,\ldots,a_r,b_1,\ldots,b_r \in V$, if
\[ \prod_{i=1}^r a_i = \prod_{i=1}^r b_i, \]
then the multisets $\{\{ a_1\fq,\ldots,a_r\fq \}\}$ and $\{\{ b_1\fq,\ldots,b_r\fq \}\}$ coincide.
\end{definition}

Notice that in an $r$-Sidon space the product of any $r$ nonzero elements is uniquely determined, up to scalars over $\fq$. 
Clearly, the following holds.

\begin{lemma}\label{lem:boxprop}
If $V \in \mathcal{G}_{q}(n,k)$ is an $r$-Sidon space, then it is an $r'$-Sidon space for any $r'$ such that $2\leq r'\leq r$.
\end{lemma}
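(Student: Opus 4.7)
The plan is to reduce the $r'$-Sidon condition to the $r$-Sidon condition by padding. Given nonzero $a_1,\dots,a_{r'},b_1,\dots,b_{r'}\in V$ with $\prod_{i=1}^{r'} a_i=\prod_{i=1}^{r'} b_i$, I want to construct an equation of length $r$ to which the hypothesis applies, and then recover the desired equality of multisets.

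First, I would pick any nonzero $c\in V$ (which exists since $k\geq 1$; in fact $k\geq 2$ is used implicitly in any meaningful Sidon-type setting). Then I would define $a_{r'+1}=\dots=a_r=c$ and $b_{r'+1}=\dots=b_r=c$. Multiplying the given identity by $c^{r-r'}$ on both sides yields
\[
\prod_{i=1}^{r} a_i=\prod_{i=1}^{r} b_i,
\]
with all factors nonzero elements of $V$. Applying the $r$-Sidon property of $V$ then gives the equality of multisets
\[
\{\!\{a_1\F_q,\dots,a_r\F_q\}\!\}=\{\!\{b_1\F_q,\dots,b_r\F_q\}\!\}.
\]

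The second step is a cancellation in the multiset: both sides contain the element $c\F_q$ with some multiplicity, and this multiplicity exceeds the multiplicity of $c\F_q$ in the original (unpadded) multisets by exactly $r-r'$. Removing $r-r'$ copies of $c\F_q$ from each side (which is valid because the two multisets are equal and therefore contain $c\F_q$ with the same total multiplicity) recovers
\[
\{\!\{a_1\F_q,\dots,a_{r'}\F_q\}\!\}=\{\!\{b_1\F_q,\dots,b_{r'}\F_q\}\!\},
\]
which is precisely the $r'$-Sidon condition for the starting tuples. The argument is thus essentially bookkeeping with multisets, and there is no real obstacle; the only subtlety worth mentioning is that the cancellation is legitimate because we are equating multisets, not ordered tuples, so we do not need to track whether the removed copies of $c\F_q$ came from the padding indices or not.
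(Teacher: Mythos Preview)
Your padding argument is correct and is the natural way to prove this statement; the paper itself states the lemma without proof, introducing it with ``Clearly, the following holds.'' Your multiset cancellation step is justified exactly as you say: since $\{\!\{a_1\F_q,\dots,a_r\F_q\}\!\}$ is the multiset union of $\{\!\{a_1\F_q,\dots,a_{r'}\F_q\}\!\}$ with $r-r'$ copies of $c\F_q$ (and similarly on the $b$-side), equality of the padded multisets implies equality of the original ones.
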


In particular, any $r$-Sidon space is a Sidon space as well.

In \cite[Lemma 41]{Roth}, the following bound on the dimension of an $r$-Sidon space $V$ of $\fqn$ was proved:
\[ \dim_{\fq}(V) < \frac{n}r+1 +\log_q(r). \]

Also in \cite[Constructions 42 and 45]{Roth}, the following two constructions of $r$-Sidon spaces were shown:

\begin{itemize}
    \item Let $q$ be a prime power and let $k$ and $r$ be two integers. Let $n=k(r+1)$ and let $\gamma\in \fqn$ be a root of an irreducible polynomial of degree $r+1$ over $\fqk$. Then $V=\{u+u^q\gamma \colon u \in \fqk\}$ is an $r$-Sidon space.
    \item Let $q$ be a prime power and let $k$ and $r$ be two integers. Let $n=kr$ and let $\gamma\in \fqn$ be a root of an irreducible polynomial $p(x)$ of degree $r$ over $\fqk$ such that $p(0)$ is not a $(q-1)$-th power. Then $V=\{u+u^q\gamma \colon u \in \fqk\}$ is an $r$-Sidon space.
\end{itemize}

As already explained in the Introduction, Sidon spaces and cyclic subspace codes with a certain minimum distance are equivalent objects. Therefore, it is quite natural to give a definition of equivalence for Sidon spaces arising from the equivalence of subspace codes; see \cite{CPSZSidon} and \cite{Zullo}. 
The study of the equivalence for subspace codes was initiated by Trautmann in \cite{Trautmann2} and the case of cyclic subspace codes has been investigated in \cite{Heideequiv} by Gluesing-Luerssen and Lehmann.
Thus, motivated by \cite[Definition 3.5 and Theorem 6.2(a)]{Heideequiv}, the authors of \cite{CPSZSidon} defined two cyclic subspace codes as \textbf{semilinearly equivalent} if there exists $\sigma \in \mathrm{Aut}(\fqn)$ such that 
\[ \mathrm{Orb}(U)=\mathrm{Orb}(V^{\sigma}), \]
where $V^{\sigma}=\{ v^\sigma \colon v \in V \}$ and this happens if and only if $U=\alpha V^{\sigma}$, for some $\alpha \in \fqn^*$.
As a consequence, they gave the following definition of equivalence among two Sidon spaces.

\begin{definition}
Let $U$ and $V$ be two $\fq$-subspaces of $\fqn$. Then we say that $U$ and $V$ are \textbf{semilinearly equivalent} if the associated codes $\mathrm{Orb}(U)$ and $\mathrm{Orb}(V)$ are semilinearly equivalent, that is there exist $\sigma \in \mathrm{Aut}(\fqn)$ and $\alpha \in \fqn^*$ such that $U=\alpha V^{\sigma}$.
In this case, we will also say that they are equivalent under the action of $(\alpha, \sigma)$.
\end{definition}

In the following theorem, we will show that the property of $V$ of being an $r$-Sidon space is invariant under semilinear equivalence.

\begin{proposition}
    \label{rsidoninvar}
Let $V$ be an $r$-Sidon space of $\fqn$ of dimension $k$. Then $\alpha V^{\sigma}$ is an $r$-Sidon space of $\fqn$ of dimension $k$, for any $\alpha\in\fqn^*$ and $\sigma\in \mathrm{Aut}(\fqn)$.
\end{proposition}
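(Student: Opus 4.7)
The plan is to reduce the claim for $\alpha V^{\sigma}$ to the $r$-Sidon property of $V$ by ``undoing'' both the scalar multiplication and the automorphism, and then checking that the correspondence between 1-dimensional $\fq$-subspaces is respected.

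First, for the dimension: the map $v\mapsto \alpha v^{\sigma}$ is an $\fq$-semilinear bijection of $\fqn$ (bijective because $\sigma\in\mathrm{Aut}(\fqn)$ and $\alpha\neq 0$), hence it preserves $\fq$-dimension, so $\dim_{\fq}(\alpha V^{\sigma})=\dim_{\fq}(V)=k$.

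Next, for the $r$-Sidon property: take nonzero $c_1,\ldots,c_r,d_1,\ldots,d_r\in\alpha V^{\sigma}$ with $\prod_i c_i=\prod_i d_i$, and write $c_i=\alpha a_i^{\sigma}$, $d_i=\alpha b_i^{\sigma}$ with nonzero $a_i,b_i\in V$. Cancelling $\alpha^r$ and using that $\sigma$ is a ring homomorphism gives $\bigl(\prod_i a_i\bigr)^{\sigma}=\bigl(\prod_i b_i\bigr)^{\sigma}$, and since $\sigma$ is a bijection, $\prod_i a_i=\prod_i b_i$. Because $V$ is an $r$-Sidon space, one has the multiset equality
\[
\{\!\{a_1\fq,\ldots,a_r\fq\}\!\}=\{\!\{b_1\fq,\ldots,b_r\fq\}\!\}.
\]
The step I would single out as the one needing care is transferring this equality of multisets of $\fq$-lines in $V$ to the analogous equality for the $c_i$'s and $d_i$'s. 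The key observation is that $\sigma$ stabilizes $\fq$ setwise, because $\fq$ is the unique subfield of $\fqn$ of order $q$; hence for any $\lambda\in\fq$, $\lambda^{\sigma}\in\fq$. Therefore the map
\[
x\fq\ \longmapsto\ (\alpha x^{\sigma})\fq
\]
is a well-defined bijection from the set of $\fq$-lines of $V$ to the set of $\fq$-lines of $\alpha V^{\sigma}$: if $a_j=\lambda a_i$ with $\lambda\in\fq$, then $\alpha a_j^{\sigma}=\lambda^{\sigma}\alpha a_i^{\sigma}\in(\alpha a_i^{\sigma})\fq$, and conversely. Applying this bijection term-by-term to the multiset identity above yields
\[
\{\!\{c_1\fq,\ldots,c_r\fq\}\!\}=\{\!\{d_1\fq,\ldots,d_r\fq\}\!\},
\]
which is precisely the $r$-Sidon condition for $\alpha V^{\sigma}$. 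This establishes the proposition, and as an immediate consequence the $r$-Sidon property is an invariant of the semilinear equivalence class, as desired.
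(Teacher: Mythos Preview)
Your proof is correct and follows essentially the same route as the paper's: reduce the product in $\alpha V^{\sigma}$ to a product in $V$ by cancelling $\alpha^r$ and applying $\sigma^{-1}$, invoke the $r$-Sidon property of $V$, and then push the resulting multiset equality back through $x\mapsto \alpha x^{\sigma}$. You are simply more explicit than the paper about why $\sigma$ preserves $\fq$-lines (namely that $\sigma$ fixes $\fq$ setwise), a detail the paper leaves implicit.
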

\begin{proof}
Clearly $\dim(\alpha V^{\sigma})=k$. Assume that 
\[ \prod_{i=1}^r (\alpha a_i^{\sigma}) = \prod_{i=1}^r (\alpha b_i^{\sigma}). \]
for some $a_1,\dots,a_r\in V$ and $b_1,\dots,b_r\in V$. Thus, since $\alpha\neq 0$ and $\sigma\in\mathrm{Aut}(\fqn)$ then
\begin{equation}
 \label{eq:prodr}
    \prod_{i=1}^r a_i = \prod_{i=1}^r b_i. 
 \end{equation}
Since $V$ is an $r$-Sidon space, Equation \eqref{eq:prodr} implies
\[
\{\{ a_1\fq,\ldots,a_r\fq \}\}=\{\{ b_1\fq,\ldots,b_r\fq \}\},
\]
and so
\[
\{\{ \alpha a_1^{\sigma}\fq,\ldots,\alpha a_r^{\sigma}\fq \}\}=\{\{\alpha  b_1^{\sigma}\fq,\ldots,\alpha b_r^{\sigma}\fq \}\}.
\]
\end{proof}

\section{Bounds on Sidon spaces and max-span $r$-Sidon spaces}\label{sec:bounds}

In this section we provide a bound on the dimension of a Sidon space $V$ in the case in which we have some information on the $s$-span of $V$ for some positive integer $s\leq 2$. Indeed, this bound follows from a lower bound on the dimension of the $s$-span of $V$ which involves the stabilizer of $V^s$. Then, we also provide an upper bound on the dimension of $V^s$, and we show how to use it in order to get constructions of $r$-Sidon spaces.\\
\\
To this aim, let start by recalling the following theorem, known as the linear analogue of Kneser's Theorem. Consider a finite field $\fqn$ and let $A,B$ be two non-zero $\fq$-subspaces of $\fqn$. Then we denote by
\[ AB=\langle ab \colon a\in A, b \in B\rangle_{\fq}. \]



For the finite field case, \cite[Theorem 2.4]{HouLeungXiang2002} reads as follows.

\begin{theorem}
\label{thm:analoguekneserthm}
Let $A,B$ be non-zero
$\fq$-subspaces of $\fqn$. Then
\[
\dim_{\fq}(AB)\geqslant \min \lbrace n, \dim_{\fq}(A)+\dim_{\fq}(B)-\dim_{\fq}(H(AB))\rbrace,
\]
where $H(AB)=\lbrace x\in \fqn^*\colon xAB= AB\rbrace \cup \{0\}$ is the stabilizer of $AB$ in $\fqn$.
\end{theorem}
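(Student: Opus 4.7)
The plan is to adapt the classical Kneser addition theorem to the multiplicative setting of finite fields, proceeding in three main stages. First, I would verify that $H := H(AB)$ is actually a subfield of $\fqn$. Closure under multiplication and inverses is immediate from the defining relation $xAB = AB$: if $x, y \in H(AB)$, then $xy \cdot AB = AB$, and applying $x^{-1}$ to $xAB = AB$ shows $x^{-1} \in H(AB)$. The less obvious property, closure under addition, follows from a dimension argument: for $x, y \in H \setminus \{0\}$ with $x + y \neq 0$, we have $(x+y)AB \subseteq xAB + yAB = AB$, and since multiplication by a nonzero element preserves $\fq$-dimension, equality must hold, giving $x + y \in H$.

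Next, I would reduce to the case where both $A$ and $B$ are $H$-subspaces by replacing them with $A' := HA$ and $B' := HB$. This substitution does not change the product, since $A'B' = H \cdot AB = AB$, and as $A' \supseteq A$ and $B' \supseteq B$, any bound proved for $(A', B')$ automatically implies the corresponding bound for $(A, B)$. Writing $h := \dim_{\fq} H$, one can then view $\fqn$ as a vector space over $H = \mathbb{F}_{q^h}$ of dimension $n/h$. Over this new base field, the stabilizer of $AB$ has $H$-dimension exactly one, so the general statement reduces to proving the inequality in the case of trivial stabilizer, namely $\dim_H(AB) \geq \min(n/h, \dim_H A + \dim_H B - 1)$.

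The main obstacle is this reduced statement. I would attack it by induction on $\dim_H A + \dim_H B$. The base case $\dim_H A = 1$ is easy: writing $A = H \cdot a$ one has $AB = aB$, so equality holds in the bound. The inductive step follows Kneser's original strategy: if one can exhibit a proper subspace $A_0 \subsetneq A$ with $A_0 B = AB$, then the inductive hypothesis applied to $(A_0, B)$ yields the desired inequality. Producing such an $A_0$ under the triviality assumption on the stabilizer requires a careful compression argument, swapping pairs of generators $(a, b) \in A \times B$ whose products collide until either $A_0 B = AB$ is forced or the stabilizer is shown to enlarge (contradicting triviality). This compression step is precisely the technical heart of the argument given in \cite[Theorem 2.4]{HouLeungXiang2002}, which I would invoke for the remaining details since reproducing it would merely duplicate the cited reference.
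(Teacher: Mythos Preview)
The paper does not prove this theorem at all: it is stated as a direct quotation of \cite[Theorem 2.4]{HouLeungXiang2002} (``For the finite field case, \cite[Theorem 2.4]{HouLeungXiang2002} reads as follows''), with no argument supplied. Your proposal therefore goes well beyond what the paper does, supplying an outline of the Hou--Leung--Xiang proof (subfield verification for $H(AB)$, reduction to $H$-linear subspaces, Kneser-style induction with a compression step) before ultimately deferring to the same reference for the technical core. Since both you and the paper rest on \cite{HouLeungXiang2002} for the substance, there is no genuine methodological divergence to compare; your sketch is simply an expanded form of the citation.
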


Since $H(AB)$ is a subfield of $\fqn$, if $\fqn$ is a prime extension of $\fq$ and $H(AB)\neq \fqn$, then $H(AB)=\fq$ and so Theorem \ref{thm:analoguekneserthm} represents the linear analogue of the well-known \textbf{Cauchy-Davenport inequality}. The following result characterizes the subspaces attaining the equality in the Cauchy-Davenport inequality, which can be seen as the linear analogue of Vosper's Theorem.

\begin{theorem}\cite[Theorem 3]{BSZ2015}
    \label{vosper}
Let $\fqn$ be a prime extension of $\fq$. Let $S,T$ be $\fq$-subspaces of $\fqn$ such that $2\leq \dim_{\fq}(S),\dim_{\fq}(T)$ and $\dim_{\fq}(ST)\leq n-2$. If
\[
\dim_{\fq}(ST)=\dim_{\fq}(S)+\dim_{\fq}(T)-1,
\]
then there are bases of $S$ and $T$ respectively of the form
\begin{center}
$\lbrace g, ga, \dots, ga^{\dim_{\fq}(S)-1}\rbrace$ and $\lbrace g', g'a, \dots, g'a^{\dim_{\fq}(T)-1}\rbrace$
\end{center}
for some $g,g',a\in \fqn$.
\end{theorem}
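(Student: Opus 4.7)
The plan is to derive the conclusion from the equality case of the linear Kneser bound (Theorem \ref{thm:analoguekneserthm}), together with an induction on $\dim_{\fq}(S)+\dim_{\fq}(T)$ that peels off one dimension of $T$ at a time. First I would normalize: pick $s\in S\setminus\{0\}$ and $t\in T\setminus\{0\}$ and replace $S$ by $s^{-1}S$ and $T$ by $t^{-1}T$. This preserves $\dim_{\fq}(S)$, $\dim_{\fq}(T)$ and $\dim_{\fq}(ST)$ (the product is rescaled by $(st)^{-1}$), all hypotheses survive, and now $1\in S\cap T$. The original statement with $g=s$, $g'=t$ follows by undoing the normalization at the end; the goal is thus to exhibit a single element $a\in\fqn$ such that $S=\langle 1,a,\ldots,a^{\dim_{\fq}(S)-1}\rangle_{\fq}$ and $T=\langle 1,a,\ldots,a^{\dim_{\fq}(T)-1}\rangle_{\fq}$.

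Next I would invoke Theorem \ref{thm:analoguekneserthm} for $(S,T)$. Because $\fqn/\fq$ is a prime extension, the stabilizer $H(ST)$ is a subfield of $\fqn$ and hence equals $\fq$ or $\fqn$; the latter would force $ST=\fqn$, contradicting $\dim_{\fq}(ST)\le n-2$. So $H(ST)=\fq$, and the assumed equality $\dim_{\fq}(ST)=\dim_{\fq}(S)+\dim_{\fq}(T)-1$ places us at the extremal value of the linear Cauchy--Davenport inequality. For the base case $\dim_{\fq}(S)=\dim_{\fq}(T)=2$, write $S=\langle 1,a\rangle_{\fq}$ and $T=\langle 1,b\rangle_{\fq}$ with $a,b\notin\fq$. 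The assumption $\dim_{\fq}(ST)=3$ forces a nontrivial $\fq$-relation among the generators $1,a,b,ab$ of $ST$, which can be rearranged as $b(a-\gamma)=\alpha+\beta a$ for some $\alpha,\beta,\gamma\in\fq$. Hence $(a-\gamma)T=\langle a-\gamma,\alpha+\beta a\rangle_{\fq}\subseteq S$, and a dimension count gives $(a-\gamma)T=S$; equivalently $T=(a-\gamma)^{-1}\langle 1,a\rangle_{\fq}$, a geometric progression with the same common ratio $a$ as $S$.

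For the inductive step I would select a codimension-one subspace $T_0\subsetneq T$ containing $1$ and apply the linear Kneser bound to $(S,T_0)$. Since $ST_0\subseteq ST$, one has $\dim_{\fq}(ST_0)\le \dim_{\fq}(ST)\le n-2$; provided $T_0$ is chosen so that $ST_0\ne\fqn$, primality of $n$ again forces $H(ST_0)=\fq$, and Kneser yields $\dim_{\fq}(ST_0)\ge \dim_{\fq}(S)+\dim_{\fq}(T_0)-1$. The upper bound $\dim_{\fq}(ST_0)\le \dim_{\fq}(S)+\dim_{\fq}(T_0)$ then confines $\dim_{\fq}(ST_0)$ to two values, and the strictly larger one can be ruled out by observing that, if it held, adjoining $St$ for $t\in T\setminus T_0$ to $ST_0$ would push $\dim_{\fq}(ST)$ above our assumed value. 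So the extremal Kneser equality is inherited by $(S,T_0)$, and the inductive hypothesis supplies $S=g\langle 1,a,\ldots,a^{\dim_{\fq}(S)-1}\rangle_{\fq}$ together with $T_0=g_0\langle 1,a,\ldots,a^{\dim_{\fq}(T)-2}\rangle_{\fq}$ sharing a common ratio $a$. Expanding $St\subseteq ST$ against the already-determined basis of $ST$ inherited from $(S,T_0)$, a dimension count forces $t\in g_0 a^{\dim_{\fq}(T)-1}\fq + T_0$, which extends $T_0$ to a geometric progression with the same ratio $a$.

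The main obstacle is forcing the \emph{same} common ratio $a$ on both $S$ and $T$: Kneser's equality only encodes a dimension coincidence, and the geometric progression structure has to propagate from $(S,T_0)$ to $t\in T\setminus T_0$ via the rigidity of the basis of $ST$. A secondary technical point is the correct choice of hyperplane $T_0$: one must avoid the degenerate situation $ST_0=\fqn$, and the hypothesis $\dim_{\fq}(ST)\le n-2$ is precisely what leaves enough room in $\fqn$ to make such a choice, so that the inductive hypothesis genuinely applies at each step.
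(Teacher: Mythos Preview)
The paper does not prove this theorem; it is quoted as background from \cite{BSZ2015} with no argument given, so there is no ``paper's own proof'' to compare your proposal against.

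On the proposal itself: the inductive step has a genuine gap. After picking a hyperplane $T_0\subset T$, Kneser gives $\dim_{\fq}(ST_0)\ge \dim_{\fq}(S)+\dim_{\fq}(T_0)-1$, and you wish to exclude the value $\dim_{\fq}(S)+\dim_{\fq}(T_0)$. But that value equals $\dim_{\fq}(ST)$, so the ``bad'' case is simply $ST_0=ST$; adjoining $St$ for $t\in T\setminus T_0$ cannot overshoot $\dim_{\fq}(ST)$, since $St\subseteq ST$ already. Nothing in your argument prevents $ST_0=ST$ from holding for \emph{every} hyperplane $T_0$, and when it does the induction stalls. The closing step---forcing $t\in g_0 a^{\dim_{\fq}(T)-1}\fq+T_0$ by ``a dimension count''---is likewise unjustified: the inclusion $St\subseteq ST$ with $\dim_{\fq}(ST)=\dim_{\fq}(ST_0)+1$ only constrains $t$ to a coset of a hyperplane, not to the specific $a$-power coset you need, and you have not shown why the ratio $a$ coming from $(S,T_0)$ governs the missing direction of $T$. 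The proof in \cite{BSZ2015} is substantially more delicate than this peeling-off scheme; it proceeds via an isoperimetric/atom argument in the style of Hamidoune to identify the structure of critical pairs, and the hypothesis $\dim_{\fq}(ST)\le n-2$ enters in a more essential way than merely ``leaving room'' for a hyperplane choice.
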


\begin{remark}
\label{rmk:h(v)=h(gamma-1v)}
    Given an $\fq$-subspace $V$ of $\fqn$, we will frequently use that \[ \dim_{\fq}(V)=[H(V)\colon \fq]\dim_{H(V)}(V).\] Indeed, let notice that the stabilizer $H(V)$ of $V$ is a subfield of $\fqn$ containing $\fq$ and $V$ is linear over $H(V)$. Hence, if $1\in V$ then $H(V)\subseteq V$ and \[\dim_{\fq}(V)=[H(V)\colon \fq]\dim_{H(V)}(V).\]
    If $1\notin V$ then $1\in \gamma^{-1}V$ for some $\gamma\in V$, $\dim_{\fq}(V)=\dim_{\fq}(\gamma^{-1}V)$ and $H(V)=H(\gamma^{-1}V)$. Therefore, also in this case \[\dim_{\fq}(V)=\dim_{\fq}(\gamma^{-1}V)=[H(\gamma^{-1}V)\colon \fq]\dim_{H(\gamma^{-1}V)}(\gamma^{-1}V)=[H(V)\colon \fq]\dim_{H(V)}(V).\]
    We will denote by $h(V)$ the degree of the field extension $H(V)$, i.e. $h(V)=[H(V)\colon \fq]$.
\end{remark}

As an immediate consequence of Remark \eqref{rmk:h(v)=h(gamma-1v)}, the following lemma holds.

\begin{lemma}
\label{lem:hv=fq}
Let $U$ be an $\fq$-subspace of $\fqn$ if dimension $k$ such that $\gcd(k,n)=1$. Then $H(V)=\fq$.
\end{lemma}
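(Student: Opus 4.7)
The plan is a short divisibility argument using exactly the formula recorded in Remark \ref{rmk:h(v)=h(gamma-1v)}. First I would note that $H(V)$ is by definition a subfield of $\fqn$ containing $\fq$, and so by the tower law $h(V)=[H(V):\fq]$ must divide $n$. Second, invoking the remark, I have
\[
k=\dim_{\fq}(V)=[H(V):\fq]\cdot\dim_{H(V)}(V)=h(V)\cdot\dim_{H(V)}(V),
\]
so $h(V)$ also divides $k$.

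Combining these two divisibilities gives $h(V)\mid \gcd(k,n)=1$, hence $h(V)=1$ and therefore $H(V)=\fq$. The only subtle point is that the formula from the remark is used in full generality (even when $1\notin V$), but this is precisely what is established there by passing from $V$ to $\gamma^{-1}V$ for some $\gamma\in V$, so no additional work is required. There is no real obstacle here; the lemma is essentially a bookkeeping consequence of the remark together with the fact that subfields of a field extension of degree $n$ have degree dividing $n$.
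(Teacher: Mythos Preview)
Your proposal is correct and matches the paper's approach exactly: the paper simply states that the lemma follows as an immediate consequence of Remark~\ref{rmk:h(v)=h(gamma-1v)}, and you have spelled out precisely that divisibility argument ($h(V)\mid k$ from the remark and $h(V)\mid n$ from the subfield structure, hence $h(V)\mid\gcd(k,n)=1$). There is nothing to add.
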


The following bounds on the dimension of a Sidon space $V$ can be useful to restrict the possible dimension of a Sidon space when we have some information, not only on $V^2$, but also on $V^s$ for some $s>2$.

\begin{definition}
Let $V$ be an $\fq$-subspace of $\fqn$ and $s \in \mathbb{N}$. Then
\[
V^s=\langle v_1v_2\cdots v_s : v_i\in V, i\in [s]\rangle_{\fq}
\]
is said to be the \textbf{$s$-span} of V over $\fq$.
\end{definition}

Moreover, we provide a natural upper bound on the dimension of the $r$-span of an $\fq$-subspace of $\fqn$ proving also that the subspaces attaining the equality in such a bound are $r$-Sidon spaces.

\begin{lemma}
\label{lem:h(t-1)<h(t)}
Let $V$ be an $\fq$-subspace of $\fqn$. Then for any positive integer $s$ it holds:
\[ H(V^{s-1})\subseteq H(V^s) \text{ and }  h(V^{s-1})\leq h(V^s). \]
\end{lemma}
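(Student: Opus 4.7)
The plan is to deduce both assertions from the single observation that the $s$-span factorises as a product: $V^s = V \cdot V^{s-1}$, where $W \cdot W' := \langle ab : a \in W, b \in W'\rangle_{\fq}$. This identity is essentially immediate from the definition: any generator $v_1 \cdots v_s$ of $V^s$ is a product of $v_1 \in V$ and $v_2 \cdots v_s \in V^{s-1}$, giving one inclusion, and conversely a product of an element of $V$ with a linear combination of $(s-1)$-fold products from $V$ expands to a linear combination of $s$-fold products, giving the other inclusion. This is the only computation I would actually write out in the proof.

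Once the factorisation is in hand, the inclusion $H(V^{s-1}) \subseteq H(V^s)$ is essentially formal. Pick $x \in H(V^{s-1})$, so that $x V^{s-1} = V^{s-1}$. Then
\[
x V^s \;=\; x \,(V \cdot V^{s-1}) \;=\; V \cdot (x V^{s-1}) \;=\; V \cdot V^{s-1} \;=\; V^s,
\]
where the middle equality uses that scalar multiplication by $x$ commutes with taking $\fq$-linear combinations of products $ab$. Hence $x \in H(V^s)$.

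For the degree inequality, I would simply note that $H(V^{s-1})$ and $H(V^s)$ are both subfields of $\fqn$ containing $\fq$ (this is the standard fact quoted in Remark \ref{rmk:h(v)=h(gamma-1v)}). Any inclusion of finite subfields gives an inequality of their degrees over the base field (indeed, divisibility), so $h(V^{s-1}) = [H(V^{s-1}):\fq] \leq [H(V^s):\fq] = h(V^s)$.

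I do not expect any real obstacle here: the whole content of the lemma is the elementary identity $V^s = V \cdot V^{s-1}$, after which stabiliser monotonicity is automatic. The only minor care point is the case $s=1$, which is handled by the convention $V^0 = \fq$, whose stabiliser is trivially $\fq \subseteq H(V)$.
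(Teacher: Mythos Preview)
Your proof is correct and follows essentially the same approach as the paper: both use the factorisation $V^s = V \cdot V^{s-1}$ and then observe that any $\lambda$ stabilising $V^{s-1}$ automatically stabilises the product. You add a few extra details (justifying the degree inequality via the subfield inclusion and handling the $s=1$ case), but the core argument is identical.
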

\begin{proof}
Let $\lmb\in H(V^{s-1})$ with $\lambda \ne 0$ then
\[
\lmb V^{s-1}=V^{s-1}.
\]
Since $V^s=V^{s-1}V$, then
\[
\lmb V^s=\lmb (V^{s-1}V)=(\lmb V^{s-1})V=V^{s-1}V=V^s,
\]
therefore $\lmb\in H(V^s)$.
\end{proof}

\begin{remark}
    \label{h(vs)leqdeg fq(V)}
Let notice that if $V$ is an $\fq$-subspace of $\fqn$ such that $1\in V$ then $H(V^s)\subseteq\fq(V)$. Indeed, let $s\in\mathbb{N}$ and $\alpha\in H(V^s)$ then 
\[
\alpha V^s\subseteq V^s.
\]
and, since $1\in V$ then $\alpha\in V^s$. Therefore $H(V^s)\subseteq V^s$ and, since $V^s\subseteq \fq(V)$ for any $s\in\mathbb{N}$, then $H(V^s)\subseteq \fq(V)$.  
\end{remark}

We can now prove a bound on the dimension of a Sidon space which involves the stabilizer of its $s$-span for some values of $s$.

\begin{theorem}
\label{thm:lowerbound}
Let $V \in \mathcal{G}_q(n,k)$ be a Sidon space containing $1$ of dimension $k\geq 3$. 
Then
\begin{enumerate}
    \item there exists $t \in \mathbb{N}$ such that $t=\min\lbrace l\in\mathbb{N}: V^l=\fq(V)\rbrace$;
    \item $\dim_{\fq}(V^s)\geqslant sk-(s-2)h(V^s)$ and $k\leq [\fq(V)\colon \fq] \left( 1-\frac{1}{s}\right)$ for any $2 \leq s \leq t$.
\end{enumerate}
In particular, 
\[
k\leq [\fq(V)\colon \fq] \left( 1-\frac{1}{t}\right).
\]
\end{theorem}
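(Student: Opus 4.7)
The plan is to establish (1) via a chain-stabilization argument, then prove the inequality (i) $\dim_{\fq}(V^s)\geq sk-(s-2)h(V^s)$ of (2) by induction on $s$, and finally derive (ii) $k\leq [\fq(V):\fq](1-1/s)$ as an immediate consequence of (i).

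For (1), the hypothesis $1\in V$ gives $V^{s-1}\subseteq V^s$ for every $s\geq 2$, so the $\fq$-subspaces $V^s$ form an ascending chain inside the finite-dimensional $\fq$-space $\fq(V)$, and therefore stabilize. Their union $W=\bigcup_s V^s$ contains $1$ and is closed under multiplication (from $V^a V^b\subseteq V^{a+b}$), so it is an $\fq$-subalgebra of $\fq(V)$; being a finite integral domain, $W$ is a subfield of $\fqn$ containing $V$, whence $W=\fq(V)$. Setting $t=\min\{l\in\mathbb{N}:V^l=\fq(V)\}$ provides the integer required. Write $m=[\fq(V):\fq]$; note that $V\neq\fq(V)$, for otherwise $V$ would be a field and then $V^2=V$, contradicting Theorem \ref{lowerboundSidon}. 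Hence $k\leq m-1$ and $t\geq 2$.

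For (2) I argue by induction on $s$ from $2$ to $t$. The case $s=2$ is exactly Theorem \ref{lowerboundSidon}. For the inductive step, I apply the linear analogue of Kneser's theorem (Theorem \ref{thm:analoguekneserthm}) to $V^{s-1}$ and $V$ inside $\fq(V)\cong\mathbb{F}_{q^m}$, obtaining
\[
\dim_{\fq}(V^s)\geq \min\{m,\ \dim_{\fq}(V^{s-1})+k-h(V^s)\}.
\]
If the minimum is attained at the second argument, the inductive hypothesis combined with $h(V^{s-1})\leq h(V^s)$ (Lemma \ref{lem:h(t-1)<h(t)}) and $s-3\geq 0$ yields
\[
\dim_{\fq}(V^s)\geq (s-1)k-(s-3)h(V^{s-1})+k-h(V^s)\geq sk-(s-2)h(V^s).
\]
The main obstacle is the alternative case $\dim_{\fq}(V^{s-1})+k-h(V^s)>m$: then $\dim_{\fq}(V^s)=m$, so $V^s=\fq(V)$ and hence $s=t$ by minimality, forcing $h(V^s)=m$ and $\dim_{\fq}(V^{t-1})>2m-k$. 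I rule this out by combining $V^{t-1}\subsetneq\fq(V)$, which gives $\dim_{\fq}(V^{t-1})\leq m-1$, with $k\leq m-1$, which gives $2m-k\geq m+1>m-1$, a contradiction. Thus (i) holds for every $2\leq s\leq t$.

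Finally, (ii) follows at once from (i) together with the trivial bounds $\dim_{\fq}(V^s)\leq m$ and $h(V^s)\leq m$: for $2\leq s\leq t$,
\[
sk\leq \dim_{\fq}(V^s)+(s-2)h(V^s)\leq m+(s-2)m=(s-1)m,
\]
so $k\leq m(1-1/s)$. The ``in particular'' assertion is the specialization at $s=t$.
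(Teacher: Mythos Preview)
Your proof is correct and follows essentially the same strategy as the paper: chain stabilization for (1), then induction on $s$ using Theorem~\ref{lowerboundSidon} as the base case and Theorem~\ref{thm:analoguekneserthm} together with Lemma~\ref{lem:h(t-1)<h(t)} for the step, and finally the trivial bounds $\dim_{\fq}(V^s)\le m$ and $h(V^s)\le m$ to obtain (ii). The only noteworthy difference is that you apply Kneser inside $\fq(V)\cong\mathbb{F}_{q^m}$ and explicitly dispose of the alternative $\dim_{\fq}(V^{s-1})+k-h(V^s)>m$ via the inequalities $k\le m-1$ and $\dim_{\fq}(V^{t-1})\le m-1$, whereas the paper applies Kneser in $\fqn$ and simply asserts that the second argument in the minimum is below $n$; your treatment makes this point more transparent but does not change the substance of the argument.
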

\begin{proof}
We first prove the existence of such a $t$. Let notice that since $1\in V$, by Remark \ref{h(vs)leqdeg fq(V)} we get
\[
V\subseteq V^2\subseteq V^3\subseteq \ldots\subseteq V^s\subseteq \ldots\subseteq \fq(V)
\]
and so this chain is finite and we can consider
\[ t= \min \{ s \in \mathbb{N} \colon V^s=V^{s+1} \}. \]
Let $\{a_1,\ldots,a_k\}$ be a basis of $V$, then by definition we have that
\[ V^{t+1}=a_1V^{t}+\ldots+a_kV^{t}, \]
and so \[a_1V^{t}=a_2V^{t}=\ldots=a_kV^{t}=V^{t},\]
implying that $V^{t}$ is $\fq(V)$-linear.
Since $V^t \subseteq \fq(V)$, this implies that $V^{t}=\fq(V)$, hence the first item is proved.\\
For the second item, let us proceed by induction on $s$. \\
For $s=2$ the statement corresponds to Theorem \ref{lowerboundSidon}.\\
Let $2< s\leq t$ and suppose the assertion true for $s'<s$. Thus, by induction we have that 
\[
\dim_{\fq}(V^{s-1})\geqslant (s-1)k-(s-3)h(V^{s-1}).
\]
Note that since $s\leq t$, then $\dim_{\fq}(V)+\dim_{\fq}(V^{s-1})-h(V^s)<n$. Therefore, by Theorem \ref{thm:analoguekneserthm} and Lemma \ref{lem:h(t-1)<h(t)} we have that
\begin{equation}
\begin{aligned}
\dim_{\fq}(V^s)&=\dim_{\fq}(V V^{s-1})\geqslant \dim_{\fq}(V)+\dim_{\fq}(V^{s-1})-h(V^s)\\
&\geqslant k+(s-1)k-(s-3)h(V^{s-1})-h(V^s)\\
&=sk-(s-2)h(V^s).
\end{aligned}
\end{equation}
Moreover, by Remark \ref{h(vs)leqdeg fq(V)} we have that $h(V^s)\leq [\fq(V):\fq]$ and since $1\in V$ we have $V^s\subseteq V^t=\fq(V)$ for any $s\leq t$, thus $\dim_{\fq}(V^s)\leq [\fq(V)\colon \fq]$. Then we get
\[
k\leq [\fq(V)\colon \fq]\left(1-\frac{1}{s}\right).
\]
\end{proof}

\begin{remark}
    Let notice that if $1\notin V$ it is not true in general that $H(V^s)\subseteq V^s$. Indeed, let $V=\gamma\fq$ where $\gamma$ is a generating element of $\fqn$ over $\fq$. Since $\fq\subseteq H(V)$ we can consider $\alpha\in\fq$ and clearly $\alpha\in H(V)$ but $\alpha\notin V^s$ since $1\notin V$. Therefore $H(V^s)\not\subseteq V^s$.\\ 
    However, the assumption that $1 \in V$ is not restrictive since $\alpha V$ is still a Sidon space for every $\alpha \in \fqn^*$ and the bound provided in the above theorem still holds by replacing $\fq(V)$ by $\fq(\alpha^{-1} V)$, where $\alpha \in V \setminus\{0\}$.
\end{remark}

Let notice that when $n$ is prime $H(V^s)=\fq$ whenever $V^s \ne \F_{q^n}$, then the above result can be improved by using the linear analogue of Vosper's Theorem, i.e. Theorem \ref{vosper} as shown in the following theorem

\begin{theorem}
\label{thm:lowerboundprime}
Let $n$ be a prime number and let $V \in \mathcal{G}_q(n,k)$ be a Sidon space containing $1$ of dimension $k\geq 3$ and let $t$ be the minimum integer such that $V^t=\fqn$. If $t>2$ then
\begin{equation}
\label{eq:lowboundprime}
   \dim_{\fq}(V^s)\geq sk 
\end{equation}
for any $2 \leq s \leq t-1$. In particular, 
\begin{equation}
\label{eq:lowbounddimprime}
k\leq \left\lfloor \frac{n}{t-1}\right\rfloor.  
\end{equation}
\end{theorem}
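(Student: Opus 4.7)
The plan is to establish the lower bound $\dim_{\fq}(V^s)\geq sk$ by induction on $s$ from $2$ up to $t-1$, and then read off the numerical consequence for $k$. The base case $s=2$ is precisely Theorem~\ref{lowerboundSidon}, so the content lies in the inductive step, where the improvement over Theorem~\ref{thm:lowerbound} will come from replacing the generic Kneser-type estimate by Vosper's theorem and deriving a contradiction with the Sidon property whenever equality in Cauchy--Davenport would occur.

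For the inductive step, fix $3\leq s\leq t-1$ and assume $\dim_{\fq}(V^{s-1})\geq (s-1)k$. Since $s\leq t-1$, the minimality of $t$ gives $V^s\subsetneq \fqn$; as $n$ is prime, the only subfields of $\fqn$ are $\fq$ and $\fqn$, and since $1\in V\subseteq V^s$ the choice $H(V^s)=\fqn$ would force $V^s=\fqn$. Hence $h(V^s)=1$, and Theorem~\ref{thm:analoguekneserthm} applied to $V^s=V\cdot V^{s-1}$ yields
\[
\dim_{\fq}(V^s)\geq \min\bigl\{n,\ k+\dim_{\fq}(V^{s-1})-1\bigr\}\geq \min\{n,\ sk-1\}.
\]
If this minimum were $n$, then $V^s=\fqn$, contradicting $s<t$; therefore $\dim_{\fq}(V^s)\geq sk-1$ and $sk\leq n$. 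Since $n$ is prime and $s\geq 2$, $k\geq 3$, the factorization $sk=n$ is impossible, so in fact $sk\leq n-1$.

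Now suppose, towards a contradiction, that $\dim_{\fq}(V^s)=sk-1$. Then equality holds in Cauchy--Davenport, and the hypotheses $\dim_{\fq}(V)=k\geq 2$, $\dim_{\fq}(V^{s-1})\geq (s-1)k\geq 2$ and $\dim_{\fq}(V^s)=sk-1\leq n-2$ are all satisfied, so Vosper's theorem (Theorem~\ref{vosper}) supplies a basis of $V$ of the form $\{g, ga, ga^2, \ldots, ga^{k-1}\}$ for some $a,g\in\fqn$. Because $\dim_{\fq}(V)=k\geq 3$, the elements $1,a,a^2$ are $\fq$-linearly independent, so in particular $a\notin \fq$. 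The three elements $g, ga, ga^2\in V$ satisfy $g\cdot(ga^2)=(ga)\cdot(ga)$, while the multisets $\{\{g\fq, ga^2\fq\}\}$ and $\{\{ga\fq, ga\fq\}\}$ cannot coincide (coincidence would force $a\in\fq$), violating the Sidon property of $V$. This contradiction yields $\dim_{\fq}(V^s)\geq sk$ and closes the induction. The ``in particular'' clause is then immediate: specializing at $s=t-1$ and using $V^{t-1}\subsetneq\fqn$ we get $(t-1)k\leq \dim_{\fq}(V^{t-1})\leq n-1$, whence $k\leq \lfloor n/(t-1)\rfloor$.

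The delicate step will be verifying that Vosper's theorem genuinely applies in the putative equality case: its hypothesis $\dim_{\fq}(V^s)\leq n-2$ fails exactly when $sk-1=n-1$, i.e.\ when $sk=n$, and it is precisely the primality of $n$ together with $s\geq 2$ and $k\geq 3$ that rules out this boundary configuration and thereby saves the argument.
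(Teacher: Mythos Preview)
Your proof is correct and follows essentially the same route as the paper's: induction on $s$ with the base case from Theorem~\ref{lowerboundSidon}, the Cauchy--Davenport inequality (Theorem~\ref{thm:analoguekneserthm} with $h(V^s)=1$) for the inductive step, and Theorem~\ref{vosper} to rule out the equality case $\dim_{\fq}(V^s)=sk-1$. The only cosmetic differences are that you exclude $sk=n$ up front via primality before invoking Vosper (the paper instead splits into the two subcases $\dim_{\fq}(V^s)=n-1$ and $\dim_{\fq}(V^s)\le n-2$), and you obtain the Sidon contradiction directly from $g\cdot(ga^2)=(ga)^2$ whereas the paper passes through Theorem~\ref{lem:charSidon} and the intersection $\dim_{\fq}(V\cap aV)\geq k-1$; both arguments are equivalent.
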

\begin{proof}
Let us proceed by induction on $s$. \\
For $s=2$ Equation \eqref{eq:lowboundprime} follows from Theorem \ref{lowerboundSidon}.\\
Let $s>2$ and $s\leq t-1$ and suppose Equation \eqref{eq:lowboundprime} holds for any $s'<s$. Then by induction we have that 
\[
\dim_{\fq}(V^{s-1})\geqslant (s-1)k.
\]
Since $n$ is prime $h(V^i)=1$ for any $i<t$. Thus, since $s\leq t-1$, $\dim_{\fq}(V^s)<n$ and by Theorem \ref{thm:analoguekneserthm} and Lemma \ref{lem:h(t-1)<h(t)} it follows that
\begin{equation}
\begin{aligned}
\dim_{\fq}(V^s)&=\dim_{\fq}(V V^{s-1})\geqslant \dim_{\fq}(V)+\dim_{\fq}(V^{s-1})-1\\
&\geqslant k+(s-1)k-1\\
&= sk-1
\end{aligned}
\end{equation}
Now we prove that it cannot happen that $\dim_{\fq}(V^s)=sk-1$. \\
If $\dim_{\fq}(V^s)=n-1$ then
\[
n-1=\dim_{\fq}(V^s)=sk-1
\]
therefore $s\mid n$ and this is not possible since $n$ is prime. \\
If $\dim_{\fq}(V^s)\leqslant n-2$ then Theorem \ref{vosper}, applied to $V$ and $V^{s-1}$, implies that $V$ has a basis in geometric progression, up to a multiplicative factor in $\fqn$, that is
\[
V=g\langle 1,a,\dots,a^{k-1}\rangle_{\fq},
\]
for some $a,g\in\fqn$.\\
By hypothesis $V$ is a Sidon space and by Theorem \ref{lem:charSidon} we have that $\dim_{\fq}(V\cap\alpha V)\leqslant 1$ for any $\alpha\in\fqn^*$.  Therefore, the only possibility for $V$ is that $\dim_{\fq}(V)=k=2$. Since $k\geqslant 3$, a contradiction arises. \\
Therefore
\[
\dim_{\fq}(V^s)\geqslant sk
\]
and so by choosing $s=t-1$, since $(t-1)k\leq \dim_{\fq}(V^{t-1})<n$, we get
\[
k\leq \left\lfloor \frac{n}{t-1}\right\rfloor.
\]
\end{proof}


Now we want to provide also an upper bound on the dimension of $V^r$. Let denote by $\fq[x_1,\dots,x_k]_r$ the $\fq$-vector space of multivariate homogeneous polynomials of degree $r$ over $\fq$ for any $r\in\mathbb{N}$ and note that this $\fq$-vector space has dimension $\binom{k+r-1}{r}$ over $\fq$. Then the following result holds.

\begin{proposition}
\label{prop:upperbound}
Let $V$ be an $\fq$-subspace of $\fqn$ and let $\dim_{\fq}(V)=k$. Then
\[
\dim_{\fq}(V^r)\leqslant \min\left\lbrace n,\binom{k+r-1}{r}\right\rbrace.
\]
\end{proposition}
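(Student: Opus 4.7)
The proof plan is a direct dimension count using the surjection from homogeneous polynomials hinted at in the paragraph preceding the statement.

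First I would dispatch the trivial bound: since $V^r$ is by definition an $\fq$-subspace of $\fqn$, one has $\dim_{\fq}(V^r)\leq n$. The substance of the statement is therefore the inequality $\dim_{\fq}(V^r)\leq \binom{k+r-1}{r}$.

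To establish the latter, fix an $\fq$-basis $\{e_1,\ldots,e_k\}$ of $V$ and consider the evaluation map
\[
\varphi\colon \fq[x_1,\ldots,x_k]_r \longrightarrow \fqn, \qquad p(x_1,\ldots,x_k)\longmapsto p(e_1,\ldots,e_k).
\]
This map is $\fq$-linear. I would then argue that its image is exactly $V^r$: on the one hand, every monomial $e_{j_1}e_{j_2}\cdots e_{j_r}$ is a product of $r$ elements of $V$ and thus lies in $V^r$, so the image is contained in $V^r$; on the other hand, any generator $v_1v_2\cdots v_r$ of $V^r$ with $v_i=\sum_{j}a_{ij}e_j$ expands, by commutativity of multiplication in $\fqn$, as the value at $(e_1,\ldots,e_k)$ of the homogeneous polynomial $\prod_{i=1}^{r}\bigl(\sum_j a_{ij}x_j\bigr)\in\fq[x_1,\ldots,x_k]_r$. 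Hence $\varphi$ is surjective onto $V^r$, and
\[
\dim_{\fq}(V^r) \;\leq\; \dim_{\fq}\bigl(\fq[x_1,\ldots,x_k]_r\bigr) \;=\; \binom{k+r-1}{r},
\]
the last equality being the standard count of monomials of degree $r$ in $k$ commuting variables (equivalently, multisets of size $r$ from a $k$-element set).

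Combining the two bounds yields the claim. There is no real obstacle here; the only point that deserves a line of justification is the passage from products of $r$ arbitrary elements of $V$ to polynomial evaluations, which relies exclusively on the bilinearity and commutativity of the product in $\fqn$. This is also what makes the bound tight in principle: equality holds precisely when $\varphi$ is injective, a condition that will motivate the subsequent definition of \emph{max-span $r$-Sidon spaces}.
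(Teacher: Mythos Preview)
Your proof is correct and follows essentially the same approach as the paper: the paper fixes a basis $\{a_1,\dots,a_k\}$ of $V$, observes that the products $a_{i_1}\cdots a_{i_r}$ with $i_1\leq\dots\leq i_r$ form a spanning set of $V^r$ of size $\binom{k+r-1}{r}$, and concludes. Your evaluation map $\varphi$ is a slightly more formal packaging of exactly this spanning-set argument, and your remark that equality holds precisely when $\varphi$ is injective is indeed the idea underlying the subsequent Theorem~\ref{thm:maxspanrsidonspace}.
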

\begin{proof}
Let observe that if $V=\langle a_1,a_2,\dots,a_k\rangle_{\fq}$ then a generator system for $V^r$ is given by 
\[
\Gamma=\lbrace a_{i_1}a_{i_2}\cdots a_{i_r}:i_1\leq i_2\leq\dots\leq i_r, i_j\in[k], j\in[r]\rbrace
\]
and so
\[
\dim_{\fq}(V^r)\leqslant |\Gamma |=\binom{k+r-1}{r}.
\]

\end{proof}

The previous lemma gives us an upper bound on the possible dimension of an $r$-Sidon space. The following theorem gives us a sufficient condition in order to have an $r$-Sidon space, obtained as a generalization of Lemma 20 in \cite{Roth}.\\

\begin{theorem}\label{thm:maxspanrsidonspace}
If $V\in\mathcal{G}_q(n,k)$ is such that $\dim_{\fq}(V^r)=\binom{k+r-1}{r}$, then V is an $r$-Sidon space.
\end{theorem}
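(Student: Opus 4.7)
The plan is to transfer the problem from $\fqn$ into the polynomial ring $R := \fq[x_1, \ldots, x_k]$, where unique factorization handles the uniqueness of the decomposition almost for free. Fix an $\fq$-basis $\{e_1, \ldots, e_k\}$ of $V$ and let $\phi_r \colon R_r \to V^r$ be the $\fq$-linear map sending a monomial $x_{i_1} \cdots x_{i_r}$ (with $i_1 \leq \cdots \leq i_r$) to $e_{i_1} \cdots e_{i_r}$. By the spanning argument used in Proposition \ref{prop:upperbound}, the map $\phi_r$ is surjective, so it is an isomorphism of $\fq$-vector spaces precisely when $\dim_{\fq}(V^r) = \binom{k+r-1}{r}$, which is our hypothesis.

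The crucial feature of $\phi_r$ is its compatibility with products of linear forms. For any $a \in V$, write $a = \sum_i \alpha_i e_i$ and associate to it the linear form $\tilde a = \sum_i \alpha_i x_i \in R_1$; this lift depends on the chosen basis but satisfies $\tilde a \ne 0$ whenever $a \ne 0$. A straightforward expansion shows that for any $a_1, \ldots, a_r \in V$, the product $\tilde a_1 \cdots \tilde a_r \in R_r$ is mapped by $\phi_r$ to $a_1 \cdots a_r \in V^r$, since both sides share the same multinomial coefficients in the bases $\{x_{i_1} \cdots x_{i_r}\}$ and $\{e_{i_1} \cdots e_{i_r}\}$ respectively.

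Now suppose $\prod_{i=1}^r a_i = \prod_{i=1}^r b_i$ for nonzero $a_i, b_i \in V$. Applying the injectivity of $\phi_r$ to the identity $\phi_r(\tilde a_1 \cdots \tilde a_r) = \phi_r(\tilde b_1 \cdots \tilde b_r)$, I obtain the polynomial identity
\[
\tilde a_1 \cdots \tilde a_r = \tilde b_1 \cdots \tilde b_r \qquad \text{in } R.
\]
Since $R$ is a UFD, the degree-$1$ forms $\tilde a_i$ and $\tilde b_j$ are irreducible, and the units of $R$ are exactly $\fq^*$, unique factorization yields a permutation $\sigma \in S_r$ and scalars $\lambda_i \in \fq^*$ such that $\tilde a_i = \lambda_i \tilde b_{\sigma(i)}$ for all $i$. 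Reading this back through the $\fq$-linear identification between $R_1$ and $V$ (sending $x_i$ to $e_i$) gives $a_i = \lambda_i b_{\sigma(i)}$, hence $a_i \fq = b_{\sigma(i)} \fq$, and the multisets $\{\{a_1 \fq, \ldots, a_r \fq\}\}$ and $\{\{b_1 \fq, \ldots, b_r \fq\}\}$ coincide.

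The only step requiring care is the compatibility of $\phi_r$ with products of linear forms, since $\phi_r$ is defined only on a single graded piece and is not a ring homomorphism. This will be verified by comparing coefficients of monomials on both sides, using that the multiplication in $V^r$ takes place inside $\fqn$ while the multiplication in $R_r$ takes place in the free commutative $\fq$-algebra; the dimension hypothesis is precisely what ensures no nontrivial relation collapses the left-hand side.
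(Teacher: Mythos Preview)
Your proposal is correct and follows essentially the same route as the paper: both fix a basis of $V$, set up the $\fq$-linear evaluation isomorphism between $\fq[x_1,\ldots,x_k]_r$ and $V^r$ (which exists precisely under the dimension hypothesis), lift the factorization question to the polynomial ring, and finish with unique factorization of degree-one forms in the UFD $\fq[x_1,\ldots,x_k]$. Your final paragraph is redundant, since you already justified the compatibility of $\phi_r$ with products of linear forms two paragraphs earlier via the multinomial-coefficient comparison.
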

\begin{proof}
 Let $\mathbf{a}=(a_1,\ldots,a_k) \in \mathbb{F}_{q^n}^k$ whose components form an $\mathbb{F}_q$-basis of $V$ and let $u_1,\dots,u_r,v_1,\dots,v_r\in V\setminus \lbrace 0\rbrace$.\\
Let denote by
\[ 
u_i=\displaystyle\sum_{s\in [k]}\alpha_{i,s}a_s=p_{u_i}(\mathbf{a})
\]
\[
v_j=\displaystyle\sum_{t\in [k]}\beta_{j,t}a_t=p_{v_j}(\mathbf{a})
\]
for $i,j\in [r]$, where $p_{u_i},p_{v_j}$, $i,j\in [r]$, are the following multivariate polynomials over $\mathbb{F}_q$ in the indeterminates $x_1,\ldots,x_k$:
\[
p_{u_i}(x_1,\ldots,x_k)=\displaystyle\sum_{s\in [k]}\alpha_{i,s}x_s
\]
\[
p_{v_j}(x_1,\ldots,x_k)=\displaystyle\sum_{t\in [k]}\beta_{j,t}x_t
\]
Let observe that $u_1u_2\cdots u_r=v_1v_2\cdots v_r$ implies
\begin{equation}
\label{papb=pcpd}
\begin{aligned}
p_{u_1}(\mathbf{a})p_{u_2}(\mathbf{a})\cdots p_{u_r}(\mathbf{a})&=\displaystyle\sum_{\substack {i_1\leqslant\dots\leqslant i_r,\\ 
i_1,\dots,i_r\in [k]}}\alpha_{i_1,i_2,\dots,i_r}a_{i_1}a_{i_2}\dots a_{i_r}\\
&=\displaystyle\sum_{\substack {j_1\leqslant\dots\leqslant j_r,\\ 
j_1,\dots,j_r\in [k]}}\beta_{j_1,j_2,\dots,j_r}a_{j_1}a_{j_2}\dots a_{j_r}\\
&=p_{v_1}(\mathbf{a})p_{v_2}(\mathbf{a})\dots p_{v_r}(\mathbf{a}).
\end{aligned}
\end{equation}
Let observe that since $\dim_{\fq}(V^r)=\binom{k+r-1}{r}$ then both $\fq[x_1,\dots, x_k]_r$ and $V^r$ are $\mathbb{F}_q$-vector spaces of the same dimension. Once fixed the ordered basis $(x_{i_1}x_{i_2}\dots x_{i_r}: i_1\leqslant \dots\leqslant i_r, i_j\in [k], j\in [r])$ of $\fq[x_1,\dots, x_k]_r$ and the ordered basis $(a_{i_1}a_{i_2}\dots a_{i_r}: i_1\leqslant \dots\leqslant i_r, i_j\in [k], j\in [r])$ of $V^r$, there exists a unique isomorphism \[
\begin{aligned}
\phi\colon \fq[x_1,\dots,x_k]_r &\to V^r
\end{aligned}
\]
such that if 
\[
L(x_1,\ldots,x_k)=\displaystyle\sum_{\substack {i_1\leqslant\dots\leqslant i_r,\\ 
i_1,\dots,i_r\in [k]}}\alpha_{i_1,i_2,\dots,i_r}x_{i_1}x_{i_2}\dots x_{i_r}\in\fq[x_1,\dots,x_k]_r,
\]
then 
\[
\phi(L(x_1,\ldots,x_k))=L(a_1,\ldots,a_k)=\displaystyle\sum_{\substack {i_1\leqslant\dots\leqslant i_r,\\ 
i_1,\dots,i_r\in [k]}}\alpha_{i_1,i_2,\dots,i_r}a_{i_1}a_{i_2}\dots a_{i_r}\in V^r.
\]
Therefore, Equation \eqref{papb=pcpd} implies a polynomial identity, that is
\[
p_{u_1}(x)p_{u_2}(x)\cdots p_{u_r}(x)=p_{v_1}(x)p_{v_2}(x)\cdots p_{v_r}(x).
\]
Also, the polynomials $p_{u_1},p_{u_2},\ldots, p_{u_r}, p_{v_1},p_{v_2},\ldots, p_{v_r}$ are irreducible over $\mathbb{F}_q$ since they have degree 1 and $\mathbb{F}_q[x_1,x_2,\dots,x_n]$ is a unique factorization domain, then for every $i\in[r]$ there exists $j\in[r]$ and $\rho_{i,j} \in \fq$ such that 
\[
p_{u_i}(x)=\rho_{i,j} p_{v_j}(x).
\]
Without loss of generality we may suppose $p_{u_i}(x)=\rho_{i,i}p_{v_i}(x)=\rho_i p_{v_i}(x)$ for every $i\in [r]$,  then 
\[
p_{u_i}(x)=\displaystyle\sum_{s\in [k]}\alpha_{i,s}x_s=\displaystyle\sum_{s\in [k]}\rho_i\beta_{i,s}x_s=\rho_i p_{v_i}(x)
\]
and thus
\[
\alpha_{i,s}=\rho_i\beta_{i,s}
\]
Therefore, $u_i=\rho_i v_i$ for every $i\in [k]$ then $\lbrace\lbrace u_1\mathbb{F}_q,\dots,u_k\mathbb{F}_q\rbrace\rbrace =\lbrace\lbrace v_1\mathbb{F}_q,\dots,v_k\mathbb{F}_q\rbrace\rbrace$.
\end{proof}

Similarly to the Sidon space case, we define \textbf{max-span} $r$-Sidon spaces those $r$-Sidon spaces that satisfy the equality in Proposition \ref{prop:upperbound}. In Theorem \ref{thm:maxspanrsidonspace}, we have proved that every subspace that satisfies the equality in Proposition \ref{prop:upperbound} is an $r$-Sidon space and hence is a max-span $r$-Sidon space.

\section{Constructions of max span r-Sidon spaces} 
\label{sec:maxrsidon}

In the last result of the above section, we have seen that when $V^r$ has dimension $\binom{k+r-1}{r}$ it turns out that $V$ is an $r$-Sidon space.
In this section, we provide some explicit constructions of such spaces obtained by generalizing Construction 21 and Construction 23 in \cite{Roth}, by making use of classical combinatorial objects and with a more algebraic approach.
We start with the former approach by introducing the notion of $B_r$-set. 
A $B_r$-set in an abelian group can be defined as follows.

\begin{definition}
A subset $S$ of an abelian group $(G,+)$ is called a \emph{$B_r$-set} if the sums of the elements of all multi-sets of size $r$ of $S$ are distinct.
\end{definition}
In particular, for $r=2$ we get the definition of Sidon set.

\begin{remark}
\label{rmk:countbinom}  
Let remember that $\binom{h+r-1}{r}$ counts all the possible multi-subsets of order $r$ of a multi-set of elements of order $h$, such that every element can be repeated up to $r$ times in the same multi-subset, without caring about the order of the elements in each multi-subsets. Therefore, in such a case, two multi-subsets are distinct if and only if they differ for at least one element (taking into account the multiplicity).
\end{remark}

\begin{theorem}
\label{thm:maxspanbrset}
Let $\mathcal{S}:= \lbrace n_1,n_2,\dots , n_k\rbrace\subseteq [h]$ be a $B_r$-set in $\mathbb{Z}$, where $r\geq 2$, and let consider an integer $n>rh$ and an element $\gamma$ of $\mathbb{F}_{q^n}$ which does not belong to any proper subfield of $\mathbb{F}_{q^n}$. Let $V:= \langle \lbrace \gamma ^{n_i}\rbrace_{i\in [k]} \rangle_{\fq}$, then $V$ is a max-span $r$-Sidon space.
\end{theorem}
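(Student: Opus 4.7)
The plan is to invoke Theorem \ref{thm:maxspanrsidonspace}: it will suffice to show that $\dim_{\fq}(V^r) = \binom{k+r-1}{r}$, since by the upper bound of Proposition \ref{prop:upperbound} this is already the maximum possible value. In other words, the $\binom{k+r-1}{r}$ natural generators of $V^r$ coming from multi-subsets of size $r$ of a basis must be $\fq$-linearly independent.

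The natural generators of $V^r$ are the products
\[
\gamma^{n_{i_1}} \gamma^{n_{i_2}} \cdots \gamma^{n_{i_r}} = \gamma^{n_{i_1}+n_{i_2}+\cdots+n_{i_r}},
\]
indexed by multi-subsets $\{\{i_1,\ldots,i_r\}\}$ of $[k]$. First I would observe that by the $B_r$-property of $\mathcal{S}$, distinct multi-subsets produce distinct integer sums $n_{i_1}+\cdots+n_{i_r}$; by Remark \ref{rmk:countbinom} there are exactly $\binom{k+r-1}{r}$ such multi-subsets, hence $\binom{k+r-1}{r}$ distinct exponents, all lying in the range $\{r,r+1,\ldots,rh\}$.

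Next I would use the hypothesis that $\gamma$ lies in no proper subfield of $\fqn$: this forces $\fq(\gamma) = \fqn$, so $\gamma$ has degree $n$ over $\fq$ and the powers $1,\gamma,\gamma^2,\ldots,\gamma^{n-1}$ are $\fq$-linearly independent. Because $n > rh$, all the exponents from the previous paragraph are strictly less than $n$, so the corresponding powers of $\gamma$ form a subset of $\{1,\gamma,\ldots,\gamma^{n-1}\}$ and are therefore $\fq$-linearly independent. This gives $\dim_{\fq}(V^r) \geq \binom{k+r-1}{r}$, and together with the reverse inequality from Proposition \ref{prop:upperbound} we conclude $\dim_{\fq}(V^r) = \binom{k+r-1}{r}$. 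Applying Theorem \ref{thm:maxspanrsidonspace} then yields that $V$ is a max-span $r$-Sidon space.

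No step looks like a real obstacle here: the $B_r$-hypothesis handles the distinctness of exponents, the subfield-avoidance hypothesis on $\gamma$ handles linear independence, and $n > rh$ is exactly what is needed to fit all the sums into the degree range. The only thing that needs mild care is making sure the dimension of $V$ itself is $k$ (that is, that the generators $\gamma^{n_1},\ldots,\gamma^{n_k}$ are already $\fq$-linearly independent), but this follows from the same reasoning applied in the case $r=1$: the exponents $n_1,\ldots,n_k \in [h] \subsetneq \{0,\ldots,n-1\}$ are distinct, so the corresponding powers of $\gamma$ are $\fq$-linearly independent.
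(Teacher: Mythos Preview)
Your proposal is correct and follows essentially the same approach as the paper: both arguments verify that $\dim_{\fq}(V^r)=\binom{k+r-1}{r}$ by using the $B_r$-property of $\mathcal{S}$ to obtain $\binom{k+r-1}{r}$ distinct exponent sums, then invoking $n>rh$ together with $\fq(\gamma)=\fqn$ to conclude that the corresponding powers of $\gamma$ are $\fq$-linearly independent, and finally applying Theorem~\ref{thm:maxspanrsidonspace}. Your remark that $\dim_{\fq}(V)=k$ follows from the same reasoning with $r=1$ also matches the paper's treatment.
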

\begin{proof}
Let observe that $V:= \langle \lbrace \gamma ^{n_i}\rbrace_{i\in [k]} \rangle_{\fq}$ is an $\fq$-subspace of dimension $k$ over $\mathbb{F}_q$. Indeed, by hypothesis $\fq(\gamma)=\fqn$, hence the elements $1,\gamma,\dots,\gamma^{n-1}$ are $\fq$-linearly independent. Since $n>2h$, then $\gamma^{n_i}\neq \gamma^{n_j}$ for all $i,j\in [k], i\neq j$. Therefore, $\dim_{\fq}(V)=k$.\\
In order to prove that $V$ is a max-span Sidon space, by Theorem \ref{thm:maxspanrsidonspace} it is sufficient to show that $\dim_{\fq}(V^r)=\binom{k+r-1}{r}$.\\
Clearly, 
\[V^r=\langle \lbrace \gamma^{n_{i_1}+\dots +n_{i_r}}: i_j\in [k], j\in [r]\rbrace \rangle.\]
Also, since $\mathcal{S}=\lbrace n_1,n_2,\dots,n_k\rbrace$ is a $B_r$-set then all the sums of any $r$-elements of $S$ are distinct. Therefore, by Remark \ref{rmk:countbinom} we have that the set
\[ 
\Bar{\Gamma}=\lbrace n_{i_1}+\dots+n_{i_r}:i_j\in [k], j\in [r], i_1\leq\dots\leq i_r\rbrace
\]
contains $\binom{k+r-1}{r}$ distinct elements. Thus the exponents of the elements in $\Gamma$ are distinct and, since $n_{i_1}+\dots +n_{i_r}\leq hr<n$ for any $i_j\in [k], j\in [r]$, then the generating elements of $V^r$, i.e.
\[\gamma^{n_{i_1}+\dots +n_{i_r}}: i_j\in [k], j\in [r]\]
are $\fq$-linearly independent over $\fq$.\\
Therefore, $\dim_{\fq}(V^r)=\binom{k+r-1}{r}$ and $V$ is a max-span Sidon space by Theorem \ref{thm:maxspanrsidonspace}.
\end{proof}

\begin{example}
For any prime power $q$, let consider $S=\lbrace 0,1,4,16\rbrace\subseteq [16]\subseteq \mathbb{Z}$, which is a $B_3$-set in $\mathbb{Z}$. By taking $n=49>3\cdot16=48$ and $\gamma\in\F_{q^{49}}\setminus\F_{q^7}$, then by Theorem \ref{thm:maxspanrsidonspace} we have that
\[
V=\langle 1,\gamma,\gamma^4,\gamma^{16}\rangle_{\F_{q}} 
\]
is a max-span $3$-Sidon space in $\F_{q^{49}}$ of dimension $4$.
\end{example}

We will now explore another construction of max-span $r$-Sidon spaces via a polynomial setting. We start with the following auxiliary lemma.

\begin{lemma}
\label{p(a==q(a) iff p(x)=q(x)}
 Let $f_1(x),f_2(x),\ldots,f_t(x)\in\mathbb{F}_q[x]\setminus\lbrace 0\rbrace$ and let $d=\max\lbrace \deg f_i(x)\colon i\in[t]\rbrace$. Then for any $n>d$ and for any $\gamma\in\fqn$ such that $\fqn=\fq(\gamma)$, the elements $f_1(\gamma),\ldots,f_t(\gamma)$ of $\fqn$ are $\fq$-linearly independent if and only if the polynomials $f_1(x),\ldots,f_t(x)$ in $\fq[x]$ are $\fq$-linearly independent.
\begin{proof}
Suppose that the elements $f_1(\gamma),\ldots,f_t(\gamma)$ of $\fqn$ are $\fq$-linearly independent and let
\begin{equation}
  \displaystyle\sum_{i\in [t]}\alpha_if_i(x)=0   
\end{equation}
where $\alpha_i\in\mathbb{F}_q$ for every $i\in[t]$. Then, by evaluating in $\gamma$ we get
\begin{equation}
  \displaystyle\sum_{i\in [t]}\alpha_if_i(\gamma)=0   
\end{equation}
and since  $f_1(\gamma),\ldots,f_t(\gamma)\in\fqn$ are $\fq$-linearly independent, then $\alpha_i=0$ for all $i\in[t]$. \\
Conversely, suppose that the polynomials $f_1(x),\ldots,f_t(x)$ in $\fq[x]$ are $\fq$-linearly independent and let  \begin{equation}
  p(\gamma)=\displaystyle\sum_{i\in [t]}\alpha_if_i(\gamma)=0. 
\end{equation}
where $p(x)=\displaystyle\sum_{i\in [t]}\alpha_if_i(x)$. If $p(x)$ is not the zero polynomial, $\gamma$ would be the root of a polynomial of degree $\deg p(x)\leq d<n$ and this is not possible since $\fqn=\fq(\gamma)$. Therefore, $p(x)$ is the zero polynomial and so $\alpha_i=0$ for all $i\in[t]$.
\end{proof}
\end{lemma}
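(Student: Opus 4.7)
The plan is to treat both directions via the evaluation homomorphism $\phi\colon \fq[x]\to \fqn$ defined by $\phi(f(x))=f(\gamma)$. Note that since $\fqn=\fq(\gamma)$, the minimal polynomial $\mu_\gamma(x)$ of $\gamma$ over $\fq$ has degree exactly $n$, and hence every nonzero polynomial in $\fq[x]$ of degree strictly less than $n$ evaluates at $\gamma$ to a nonzero element of $\fqn$.

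For the forward direction, suppose $f_1(\gamma),\ldots,f_t(\gamma)$ are $\fq$-linearly independent and take an arbitrary relation
\[
\sum_{i\in[t]}\alpha_i f_i(x)=0
\]
in $\fq[x]$, with $\alpha_i\in\fq$. Applying $\phi$ yields $\sum_i\alpha_i f_i(\gamma)=0$ in $\fqn$, and linear independence of the images forces $\alpha_i=0$ for all $i$. This direction does not actually need the hypothesis $n>d$.

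For the reverse direction, suppose $f_1(x),\ldots,f_t(x)$ are $\fq$-linearly independent in $\fq[x]$ and assume $\sum_i\alpha_i f_i(\gamma)=0$ for some $\alpha_i\in\fq$. Set $p(x):=\sum_i\alpha_i f_i(x)$, so that $\deg p(x)\leq d<n$ and $p(\gamma)=0$. If $p(x)$ were nonzero, then $\mu_\gamma(x)\mid p(x)$ would force $\deg p(x)\geq n$, contradicting $\deg p(x)\leq d<n$. Hence $p(x)=0$ in $\fq[x]$, and the $\fq$-linear independence of the polynomials $f_i(x)$ yields $\alpha_i=0$ for all $i$, as required.

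The key (and only nontrivial) ingredient is the observation that $\fq(\gamma)=\fqn$ forces $\deg \mu_\gamma(x)=n$; the hypothesis $n>d$ is exactly what is needed to guarantee that the evaluation map $\phi$ is injective on the $\fq$-subspace of polynomials of degree at most $d$. I do not expect any real obstacle: the argument is a routine application of the universal property of polynomial evaluation together with degree considerations.
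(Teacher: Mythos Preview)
Your proof is correct and follows essentially the same approach as the paper: both directions use the evaluation-at-$\gamma$ map, with the reverse direction hinging on the fact that a nonzero polynomial of degree at most $d<n$ cannot vanish at a generator $\gamma$ of $\fqn$. Your added remarks (that the forward direction does not require $n>d$, and the explicit appeal to $\mu_\gamma\mid p$) are minor clarifications rather than a different argument.
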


By using an enough large set of irreducible polynomials over the base field, we are able to construct max-span $r$-Sidon spaces.

\begin{theorem}
\label{thm:maxspanpolinomiali}
Let $k,r$ be integers such that $1<r<k$. Let $\mathcal{I}:=\lbrace p_{i_1,\dots,i_r}(x):i_j\in [k], j\in [r],i_1\leq\dots\leq i_r\rbrace$ be a set of $\binom{k+r-1}{r}$ distinct monic irreducible polynomials over $\fq$ and let $\Delta=\max\lbrace \deg(p_{i_1,\dots,i_r}(x)):p_{i_1,\dots,i_r}(x)\in\mathcal{I}\rbrace$. For any $j\in [k]$, let 
\begin{equation}
f_j(x):= \displaystyle\prod_{\substack{(i_1,\dots,i_r)\in [k]\times\dots\times [k]:\\
i_1\leq\dots\leq i_r,\\ i_h\neq j\,\,\,\forall h\in[r] }} p_{i_1,\dots,i_r}(x),
\end{equation}
and, for $n>r\Delta\cdot\binom{k+r-2}{r}$ and $\gamma\in\mathbb{F}_{q^n}$ such that $\gamma$ does not belong to any proper subfield of $\mathbb{F}_{q^n}$. Then $V:= \langle\lbrace f_j(\gamma)\rbrace_{j\in [k]}\rangle_{\fq}$ is a max-span $r$-Sidon space.
\end{theorem}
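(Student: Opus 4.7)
My plan is to reduce the statement via Theorem~\ref{thm:maxspanrsidonspace} and Lemma~\ref{p(a==q(a) iff p(x)=q(x)} to a polynomial linear-independence problem in $\fq[x]$, and then to prove it by induction on the support size of the multi-index. By Theorem~\ref{thm:maxspanrsidonspace} it suffices to show $\dim_{\fq}(V^r) = \binom{k+r-1}{r}$. Since $V^r$ is spanned by the $\binom{k+r-1}{r}$ products $f_J(\gamma) := f_{j_1}(\gamma) \cdots f_{j_r}(\gamma)$ indexed by multi-indices $J=(j_1\leq \cdots \leq j_r)$ in $[k]$, and since $\deg(f_{j_1}\cdots f_{j_r}) \leq r\Delta\binom{k+r-2}{r} < n$, Lemma~\ref{p(a==q(a) iff p(x)=q(x)} reduces everything to two polynomial linear-independence statements in $\fq[x]$: (a) $f_1(x),\ldots,f_k(x)$ are $\fq$-linearly independent, giving $\dim_{\fq}(V)=k$; and (b) the products $f_J(x)$ are $\fq$-linearly independent.

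The structural key is the explicit factorization $f_J(x)=\prod_{I\in\mathcal{I}}p_I(x)^{e_I(J)}$ with $e_I(J) = r - |\{s\in[r]: j_s\in\operatorname{supp}(I)\}|$, from which $p_I\nmid f_J$ if and only if $\operatorname{supp}(J)\subseteq\operatorname{supp}(I)$. Setting $\pi_i := p_{(i,i,\ldots,i)}\in\mathcal{I}$, one has $\pi_i\mid f_\ell$ for every $\ell\neq i$ and $\gcd(\pi_i,f_i)=1$, which immediately yields (a): if $\sum_j c_j f_j(x) = 0$, then reduction modulo $\pi_i$ leaves $c_i f_i(x)\equiv 0\pmod{\pi_i}$, and since $c_i\in\fq$ and $\deg\pi_i\geq 1$, this forces $c_i=0$ for every $i$.

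For (b) I would induct on $s=|\operatorname{supp}(J)|$. The base case $s=1$, i.e.\ $J^*=(i,\ldots,i)$, is handled exactly as in the previous paragraph: reduction modulo $\pi_i$ leaves only $c_{J^*} f_{J^*}(x)\equiv 0 \pmod{\pi_i}$, forcing $c_{J^*}=0$. For the inductive step, assuming $c_{J'}=0$ whenever $|\operatorname{supp}(J')|<s$ and fixing $J^*$ with $|\operatorname{supp}(J^*)|=s$ and $S:=\operatorname{supp}(J^*)$, reducing modulo $p_{J^*}$ and applying the inductive hypothesis yields
\[
\sum_{J'\colon\operatorname{supp}(J')=S} c_{J'} f_{J'}(x) \equiv 0 \pmod{p_{J^*}(x)}.
\]
When $s=r$ (so $J^*$ has pairwise distinct entries), $J^*$ is the unique multi-index of size $r$ with support $S$ and the relation immediately forces $c_{J^*}=0$.

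The main obstacle is the intermediate range $1<s<r$, where $\binom{r-1}{s-1}>1$ multi-indices share support $S$ and a single reduction does not isolate $c_{J^*}$. My plan is to combine the congruences modulo $p_{J^*}$ for \emph{every} $J^*$ with $\operatorname{supp}(J^*)=S$. Using the identity $f_{J'}=\prod_{i\in S} f_i^{m_i(J')}$ (valid when $\operatorname{supp}(J')=S$, with $m_i(J')$ the multiplicity of $i$ in $J'$), one factors out $\prod_{i\in S} f_i$ and writes $\sum_{J'} c_{J'} f_{J'} = \prod_{i\in S} f_i\cdot Q$ with $Q$ a homogeneous polynomial of degree $r-s$ in $\{f_i\}_{i\in S}$. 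Since $\gcd(p_{J^*},f_i)=1$ for $i\in S$, the Chinese Remainder Theorem forces $\prod_{J^*\colon\operatorname{supp}(J^*)=S} p_{J^*}$ to divide $Q$, which translates (by evaluating at a root $\beta_{J^*}$ of each $p_{J^*}$) into a homogeneous linear system in the $c_{J'}$'s whose coefficient matrix $\bigl(\prod_{i\in S} f_i(\beta_{J^*})^{m_i(J')-1}\bigr)_{J^*,J'}$ is a generalized Vandermonde matrix. Its non-singularity will follow from the fact that, as $J^*$ varies, the tuples $(f_i(\beta_{J^*}))_{i\in S}$ are in sufficiently general position: for $|S|=2$ this reduces to $f_i/f_j$ being transcendental over $\fq$ (an immediate consequence of (a)), while for general $|S|$ it is established by an iterated argument based on the explicit factorization of the $f_i$'s and the coprimality of the irreducibles $p_I$. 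For $r=2$, this intermediate range is empty, recovering the case originally treated by Roth, Raviv and Tamo.
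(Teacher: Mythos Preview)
Your overall strategy---reduce via Theorem~\ref{thm:maxspanrsidonspace} and Lemma~\ref{p(a==q(a) iff p(x)=q(x)} to linear independence of the products $f_{J}(x)=f_{j_1}(x)\cdots f_{j_r}(x)$ in $\fq[x]$, then peel off coefficients by reducing modulo suitable $p_I$'s and inducting on the support size of $J$---is exactly the paper's. You also correctly isolate the only non-trivial point: when $1<s<r$ there are $\binom{r-1}{s-1}>1$ multi-indices $J'$ sharing the same support $S$, so reducing modulo a single $p_{J^*}$ leaves a genuine linear combination rather than a single term. (The paper's proof sweeps this under ``by reiterating this argument''; as written that iteration is only justified for $r=2$, since for $r\ge 3$ the congruence modulo $p_{i_1,s,\ldots,s}$ retains \emph{all} terms $\alpha_{(i_1^m,s^{r-m})}f_{i_1}^mf_s^{r-m}$ with $1\le m\le r-1$, not just the one displayed.)

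The genuine gap is in your proposed Vandermonde fix. You reach $Q(x)\equiv 0\pmod{p_{J^*}}$ for every $J^*$ with support $S$, where $Q=\sum_{J'}c_{J'}\prod_{i\in S}f_i^{\,m_i(J')-1}$, and then evaluate at a root $\beta_{J^*}$ of each $p_{J^*}$ to obtain a square linear system in the $c_{J'}$. But the non-singularity of that matrix does not follow from what you cite. Already for $|S|=2$, say $S=\{a,b\}$, the determinant vanishes exactly when two of the ratios $f_a(\beta_{J^*})/f_b(\beta_{J^*})$ coincide, and ``$f_a/f_b$ transcendental over $\fq$'' merely says this rational function is non-constant---a non-constant rational function can certainly take the same value at two prescribed algebraic points. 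The structural fact you overlook is that the irreducibles $p_{J^*}$ with $\operatorname{supp}(J^*)=S$ are \emph{not} factors of $f_a$ or $f_b$ (since $a,b\in S$); hence, once the remaining $p_I$'s are fixed, $f_a$ and $f_b$ are completely determined while the $p_{J^*}$'s---and with them the evaluation points $\beta_{J^*}$---are still free to be any distinct monic irreducibles over $\fq$. Nothing in the hypotheses prevents the ratios from colliding, and the vaguer ``iterated argument based on the explicit factorization'' for larger $|S|$ inherits the same defect. To close the argument you need either an additional constraint on $\mathcal I$ (which the theorem does not assume) or a different mechanism---for instance, exploiting the full $\pi_i$-adic filtration of the identity $\sum_J c_J f_J=0$ simultaneously for all $i$, rather than passing to a single evaluation per $p_{J^*}$.
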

\begin{proof}
First, let notice that $V$ has dimension $k$ over $\mathbb{F}_q$. Indeed, let observe that since the maximum degree of any polynomial in $\mathcal{I}$ is $\Delta$, then by Remark \ref{rmk:countbinom} it follows that
\begin{equation}
    \begin{aligned}
\deg(f_s(x))&=\deg\left(\displaystyle\prod_{i_1,\dots,i_r\neq s}p_{i_1,\dots,i_r}(x)\right)= \displaystyle\sum_{\substack{i_1,\dots,i_r\in [k]\times\dots\times [k],\\ i_1\leq\dots\leq i_r,\\ i_h\neq s\,\,\,\forall h \in [r]}}\deg(p_{i_1,\dots,i_r}(x))\\
&\leq \binom{k-1+r-1}{r}\Delta<\frac{n}{r}<n  
    \end{aligned}
\end{equation}
for all $s\in[k]$.\\
Hence, since $\gamma$ does not belong to any proper subfield of $\fqn$, Lemma \ref{p(a==q(a) iff p(x)=q(x)} implies that the elements $f_1(\gamma),\ldots,f_k(\gamma)$ of $\fqn$ are $\fq$-linearly independent if and only if the polynomials $f_1(x),\ldots,f_k(x)$ in $\fq[x]$ are $\fq$-linearly independent. Then, suppose that
\[
\displaystyle\sum_{j\in [k]}\alpha_jf_j(x)=0.
\]
Notice that for any $s\in[k]$, $p_{s,\dots,s}(x)$ is an irreducible factor of the right hand of the equation but not of the left hand because by definition $\gcd(f_s(x),p_{s,\dots,s}(x))=1$ and $\gcd(f_j(x),p_{s,\dots,s}(x))=p_{s,\dots,s}(x)$ for all $j,s\in [k], j\neq s$. Then we get that $\alpha_s=0$ for any $s\in[k]$. Therefore the polynomials $\lbrace f_j(x): j\in[k]\rbrace$ are linearly independent and so by Lemma \ref{p(a==q(a) iff p(x)=q(x)} $\dim_{\fq}(V)=k$.\\
In order to prove the statement, by Theorem \ref{thm:maxspanrsidonspace}, it is sufficient to show that $\dim_{\fq}(V^r)=\binom{k+r-1}{r}$. Let consider the elements
\[ 
 V^r=\langle  f_{i_1}(\gamma)\cdots f_{i_r}(\gamma):i_1,\dots,i_r\in [k], i_1\leq\ldots\leq i_r \rangle.
 \]
 We only have to prove that these elements are linearly independent over $\mathbb{F}_q$.\\ 
Let notice that, since $\frac{n}{r}>\max\lbrace \deg(f_j(x)):j\in [k]\rbrace$, then by Remark \ref{rmk:countbinom} we have that
\begin{equation}
\begin{aligned}
\deg\left(\displaystyle\sum_{\substack{i_1,\dots,i_r\in [k],\\ i_1\leq\dots\leq i_r}} \alpha_{i_1,\dots,i_r}f_{i_1}(x)\cdots f_{i_r}(x)\right)
&\leq \max\left\lbrace \deg(f_{i_1}(x)\cdots f_{i_r}(x))\right\rbrace_{i_1,\dots,i_r\in [k], i_1\leq \dots \leq i_r} \\
&= \max\left\lbrace \displaystyle\sum_{j\in[r]}{\deg(f_{i_j}(x))}\right\rbrace_{i_j\in [k],j\in[r], i_1\leq\dots\leq i_r}\\
&\leq r \max\lbrace \deg(f_{i_1}(x)),\dots, \deg(f_{i_r}(x))\rbrace_{\substack{i_j\in [k],j\in[r],\\i_1\leq\dots\leq i_r},}\\
&\leq r \Delta\\
&<n.
\end{aligned}
\end{equation}
Therefore, since $\gamma$ does not belong to any proper subfield of $\fqn$, then by Lemma \ref{p(a==q(a) iff p(x)=q(x)}, it follows that the chosen generating elements of $V^r$ are linearly independent over $\mathbb{F}_q$ if and only if the related polynomials of $ f_{i_1}(x)\cdots f_{i_r}(x)$ are linearly independent over $\mathbb{F}_q$.\\
Thus, suppose that
\begin{equation}
\label{equation F_x lin indip}
\displaystyle\sum_{\substack{i_1,\dots,i_r\in [k],\\
i_1\leq\dots\leq i_r}} \alpha_{i_1,\dots,i_r}f_{i_1}(x)\cdots f_{i_r}(x)=0
\end{equation} 
for some $\alpha_{i_1,\dots,i_r}\in\mathbb{F}_q$. \\
By taking \eqref{equation F_x lin indip} modulo $p_{s,\dots,s}(x)$ for all $s\in [k]$, it results
\[ 
\alpha_{s,\dots,s}(f_s(x))^r\equiv 0\quad \mod(p_{s,\dots,s}(x))
\]
because $p_{s,\dots,s}(x)$ divides $f_j(x)$ for all $j\neq s$. Since by $\gcd(f_s(x),p_{s,\dots,s}(x))=1$ by construction, it follows that $\alpha_{s,\dots,s}=0$ for all $s\in [k]$. Then Equation \eqref{equation F_x lin indip} becomes 
\begin{equation}
\label{equation F_x lin indip 2}
\displaystyle\sum_{\substack{i_1,\dots,i_r\in [k], i_1\leq\dots\leq i_r,\\ (i_1,\dots,i_r)\neq (i,\dots,i),\\ i\in [k]}} \alpha_{i_1,\dots,i_r}f_{i_1}(x)\dots f_{i_r}(x) = 0
\end{equation}
Now, for any $i_1\in[k]$ let consider \eqref{equation F_x lin indip 2} modulo $p_{i_1,s,\dots,s}(x)$ for all $s\in[k]$ such that $s>i_1$ in $[k]$, then we obtain
\[
\alpha_{i_1,s,\dots,s}f_{i_1}(x)f_s(x)^{r-1}\equiv 0\quad \mod(p_{i_1,s,\dots,s}(x)).
\]
because $p_{i_1,s,\dots,s}(x)$ with $s>i_1$ divides $f_j(x)$ for all $j\neq i_1$ and $j\neq s$ in $[k]$. However, $\gcd(f_s(x),p_{i_1,s,\dots,s}(x))=\gcd(f_{i_1}(x),p_{i_1,s,\dots,s}(x))=1$, therefore $\alpha_{i_1,s,\dots,s}=0$ for all $s\in[k]$ such that $s>i_1$.\\
By reiterating this argument, we obtain 
\[
\alpha_{i_1,i_2,\dots,i_r}=0
\]
for any $i_j\in[k]$ and $j\in[r]$ with $i_1\leq\dots\leq i_r$. Therefore the polynomials are linearly independent and thus the correspondent evaluations in $\gamma$ are linearly independent over $\fq$. By Theorem \ref{thm:maxspanrsidonspace} it follows that $V$ is a max-span $r$-Sidon space.
\end{proof}

\begin{example}
Let $q=7, k=4,r=3$ and let consider 
\begin{equation}
    \begin{aligned}
    \mathcal{I}=\lbrace &p_{111}(x)=x^2 + 4x + 1,
    p_{112}(x)=x^2 + 2x + 2,
    p_{113}(x)=x^2 + 2x + 5,\\
    &p_{114}(x)=x^2 + 4x + 6,
    p_{122}(x)=x^2 + 5x + 3,
    p_{123}(x)=x^2 + 5x + 5,\\
    &p_{124}(x)=x^2 + 4x + 5,
    p_{133}(x)=x^2 + 6x + 6,
    p_{134}(x)=x^2 + 3x + 1,\\
    &p_{144}(x)=x^2 + 3x + 6,
    p_{222}(x)=x^2 + 2,
    p_{223}(x)=x^2 + x + 6,\\
    &p_{224}(x)=x^2 + 4,
    p_{233}(x)=x^2 + x + 3,
    p_{233}(x)=x^2 + 3x + 5,
    p_{234}(x)=x^2 + 5x + 2,\\
    &p_{333}(x)=x^2 + x + 4,
    p_{334}(x)=x^2 + 1,
    p_{344}(x)=x^2 + 6x + 3,
    p_{444}(x)=x^2 + 6x + 4 \rbrace.
    \end{aligned}
\end{equation}
By MAGMA computations it can be easily proved that $\mathcal{I}$ is a set of $\binom{k+r-1}{r}=\binom{6}{3}=20$ distinct monic irreducible polynomials over $\F_7$. \\
Let 
\begin{equation}
    \begin{aligned}
f_1(x):=&(x^2+2)(x^2+x+6)(x^2+4)(x^2+x+3)(x^2+3x+5)\\
&(x^2+5x+2)(x^2+x+4)(x^2+1)(x^2+6x+3)(x^2+6x+4)\\
f_2(x):=&(x^2+4x+1)(x^2+2x+5)(x^2+4x+6)(x^2+6x+6)(x^2+3x+1)\\     &(x^2+3x+6)(x^2+x+4)(x^2+1)(x^2+6x+3)(x^2+6x+4)\\
f_3(x):=&(x^2+4x+1)(x^2+2x+2)(x^2+4x+6)(x^2+5x+3)(x^2+4x+5)\\     &(x^2+3x+6)(x^2+2)(x^2+4)(x^2+5x+2)(x^2+6x+4)\\
f_4(x):=&(x^2+4x+1)(x^2+2x+2)(x^2+2x+5)(x^2+5x+5)\\
&(x^2+4x+5)(x^2+6x+6)(x^2+x+6)(x^2+4)(x^2+3x+5)(x^2+x+4).
    \end{aligned}
\end{equation}

Let $\Delta=\max\lbrace\deg(p(x))\colon p(x)\in\mathcal{I}\rbrace=2$ and let $n=61>r\Delta\binom{k+r-2}{r}=3\cdot 2\cdot \binom{5}{3}=60$ and let $\gamma\in\F_{7^{61}}$. Then by Theorem \ref{thm:maxspanpolinomiali} we have that
\[
V:=\langle f_1(\gamma),f_2(\gamma),f_3(\gamma),f_4(\gamma)\rangle_{\F_{7}}
\]
is a max-span 3-Sidon space of dimension $4$ in $\F_{7^{61}}$.
\end{example}

\section{New constructions from scattered polynomials}\label{sec:constr}

In this section, we want to provide further and explicit constructions of $r$-Sidon spaces, by using combinatorial and algebraic objects such as scattered polynomials and scattered subspaces. In particular, we prove that in sufficiently large extensions, every scattered polynomial defines an $r$-Sidon space. This will allow us to construct many non-equivalent examples of them. Then we exactly determine the dimension over $\fq$ of the $r$-span of a Sidon space defined by a monomial in $\fqn$ where $n=kt$, $t\geq r+1$ in order to compare this dimension with the bound of Theorem \ref{thm:lowerbound}. We can conclude the section by considering the subspace defined by the trace function. We prove that this subspace is not a Sidon space and we compute the dimension of its $r$-span.\\
\\
Let start with the following definition.\\
A \textbf{linearized polynomial} (or $q$-\textbf{polynomial}) is a polynomial of the form
$$ f(x)=\sum_{i=0}^{t}a_i x^{q^i}, \quad a_i \in \F_{q^k}.$$
If $f$ is nonzero, the $q$-\textbf{degree} of $f$ will be the maximum $i$ such that $a_i \neq 0$.

The set of linearized polynomials forms an $\F_q$-algebra with the usual addition, the scalar multiplication by elements in $\fq$ and the composition of polynomials. We denote this $\F_q$-algebra by $\mathcal{L}_{k,q}$.
The elements of the quotient algebra $\tilde{\mathcal{L}}_{k,q}$ obtained from $\mathcal{L}_{k,q}$  modulo the two-sided ideal $(x^{q^k}-x)$ are represented by linearized polynomials of $q$-degree less than $k$, i.e.
$$ \tilde{\mathcal{L}}_{k,q} = \left\{\sum_{i=0}^{k-1}f_i x^{q^i}, \quad f_i \in \F_{q^k} \right\}.$$
It is well-known that the above $\F_q$-algebra is isomorphic to the $\fq$-algebra of the $\fq$-linear endomorphisms of $\fqn$. 
This allows us to identify a linearized polynomial with the endomorphism it defines and so we will speak of \textbf{kernel} and \textbf{rank} of a $q$-polynomial $f$ meaning by this the kernel and rank of the corresponding endomorphism, denoted by $\ker(f)$ and $\mathrm{rk}(f)$, respectively. 

We refer to Chapter 3 in \cite{lidl1997finite} and \cite{wu2013linearized} for some references on linearized polynomials.

Sheekey in \cite{sheekey2016new} introduced the notion of scattered polynomials in order to find new examples of optimal codes in the rank metric.

\begin{definition}
A polynomial $f \in {\mathcal{L}}_{k,q}$ is said to be \textbf{scattered} if for any $a,b \in \mathbb{F}_{q^k}^*$, $f(a)/a=f(b)/b$ implies that $a$ and $b$ are $\fq$-proportional.
\end{definition}

Let $f\in\mathcal{L}_{k,q}$ and $U_f=\{(x,f(x))\colon x \in \F_{q^k}\}\subseteq \mathbb{F}_{q^k}^2$ be the $k$-dimensional $\fq$-subspace defined by the graph of $f$. The polynomial $f$ is scattered if $U$ is a \textbf{scattered} $\fq$-subspace of $\fqk^2$, namely if for every $a \in \mathbb{F}_{q^k}^*$ we have
\begin{equation}\label{eq:scattcondSa,f} 
\dim_{\fq}(S_{a,f})= 1, 
\end{equation}
where $S_{a,f}=\{ \rho\in \F_{q^k}\colon \rho(a,f(a)) \in U_f \}$  (see e.g. \cite{hscatt}).\\

Our aim now is to construct $r$-Sidon spaces from scattered polynomials.

\begin{theorem}
\label{thm:Vfgammarsidon}
Let $q$ be a prime power and let $k$ and $r$ be two positive integers. Let $n=kt$ for some $t\geq r+1$, and let $\gamma\in \fqn$ be a root of an irreducible polynomial of degree $t$ over $\fqk$. If $f(x) \in \mathcal{L}_{k,q}$ is a scattered polynomial then 
\[V_{f,\gamma}=\{u+f(u)\gamma \colon u \in \fqk\}\] 
is an $r$-Sidon space of dimension $k$.
\end{theorem}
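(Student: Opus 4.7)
The plan is to reduce the $r$-Sidon property of $V_{f,\gamma}$ to the scattered property of $f$ by expanding a product of $r$ elements of $V_{f,\gamma}$ as a polynomial of degree $r$ in $\gamma$ over $\fqk$, and then comparing coefficients. The hypothesis $t\geq r+1$ is precisely what makes the coefficient comparison legitimate.

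First I would verify that $\dim_{\fq}(V_{f,\gamma})=k$. The map $\phi:\fqk\to\fqn$, $u\mapsto u+f(u)\gamma$, is $\fq$-linear, and $\phi(u)=0$ forces $u=0$ because $1$ and $\gamma$ are $\fqk$-linearly independent (so both the $\fqk$-component $u$ and the coefficient $f(u)$ of $\gamma$ must vanish). In particular, if $a=u+f(u)\gamma$ is nonzero then so is $u$. For the Sidon property, take nonzero $a_i=u_i+f(u_i)\gamma$ and $b_i=v_i+f(v_i)\gamma$ with $\prod_{i=1}^r a_i=\prod_{i=1}^r b_i$, so automatically $u_i,v_i\in\fqk^*$. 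Setting $w_i=f(u_i)/u_i$ and $w'_i=f(v_i)/v_i$, the identity rewrites as
\[
\left(\prod_{i=1}^r u_i\right)\prod_{i=1}^r (1+w_i\gamma)=\left(\prod_{i=1}^r v_i\right)\prod_{i=1}^r (1+w'_i\gamma),
\]
with both sides being polynomials in $\gamma$ of degree at most $r$ and coefficients in $\fqk$. Since $\gamma$ has minimal polynomial of degree $t\geq r+1$ over $\fqk$, the powers $1,\gamma,\ldots,\gamma^r$ are $\fqk$-linearly independent, and so matching coefficients yields $\prod_i u_i=\prod_i v_i$ together with $e_j(w_1,\ldots,w_r)=e_j(w'_1,\ldots,w'_r)$ for every $j\in\{1,\ldots,r\}$, where $e_j$ denotes the $j$-th elementary symmetric polynomial.

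The last step converts this into the multiset equality required by the definition. Equality of all elementary symmetric polynomials means $\prod_i (x-w_i)=\prod_i (x-w'_i)$ in $\fqk[x]$, hence, by unique factorization of monic polynomials, $\{\{w_1,\ldots,w_r\}\}=\{\{w'_1,\ldots,w'_r\}\}$ as multisets; choose a permutation $\sigma$ with $w_i=w'_{\sigma(i)}$. Now $f(u_i)/u_i=f(v_{\sigma(i)})/v_{\sigma(i)}$, and the scattered hypothesis on $f$ upgrades this to $u_i=\lambda_i v_{\sigma(i)}$ for some $\lambda_i\in\fq^*$. The $\fq$-linearity of $f$ then gives $a_i=\lambda_i\,b_{\sigma(i)}$, so $\{\{a_1\fq,\ldots,a_r\fq\}\}=\{\{b_1\fq,\ldots,b_r\fq\}\}$, as required. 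The only delicate point is the coefficient comparison in the displayed identity; everything else is formal manipulation using the scattered hypothesis and factorization in $\fqk[x]$.
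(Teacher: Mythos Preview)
Your proof is correct and follows essentially the same strategy as the paper: expand $\prod a_i$ and $\prod b_i$ as polynomials of degree $\le r$ in $\gamma$ with coefficients in $\fqk$, use the $\fqk$-linear independence of $1,\gamma,\dots,\gamma^r$ (guaranteed by $t\ge r+1$) to identify these polynomials, conclude that the multisets $\{\{f(u_i)/u_i\}\}$ and $\{\{f(v_i)/v_i\}\}$ coincide, and then invoke the scattered hypothesis. The only cosmetic difference is that the paper phrases the middle step as ``same polynomial $\Rightarrow$ same roots with multiplicity'' while you phrase it as ``same elementary symmetric functions $\Rightarrow$ same monic polynomial''; these are of course equivalent.
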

\begin{proof}
Clearly, $\dim_{\fq}(V_{f,\gamma})=k$. Now, let
\[ a_1=u_1+f(u_1)\gamma, \ldots, a_r=u_r+f(u_r)\gamma, \]
and 
\[ b_1=v_1+f(v_1)\gamma, \ldots, b_r=v_r+f(v_r)\gamma, \]
for any nonzero $u_1,\ldots,u_r,v_1,\ldots,v_r \in \fqk$, and assume that
\[ \prod_{i=1}^r a_i = \prod_{i=1}^r b_i. \]
The above equation may be read as
\[ \prod_{i=1}^r (u_i+f(u_i)\gamma) = \prod_{i=1}^r (v_i+f(v_i)\gamma), \]
and Lemma \ref{p(a==q(a) iff p(x)=q(x)} implies that the following polynomials
\[ P_{u_1,\ldots,u_r}(x)= \prod_{i=1}^r (u_i+f(u_i)x) \]
and 
\[ P_{v_1,\ldots,v_r}(x)= \prod_{i=1}^r (v_i+f(v_i)x) \]
are equal since $1,\gamma,\ldots,\gamma^r$ are $\fq$-linearly independent. In particular, this implies that the two polynomials $P_{u_1,\ldots,u_r}(x)$ and $P_{v_1,\ldots,v_r}(x)$ have the same roots over $\fqn$ with the same multiplicities, i.e. the multisets
\[ \left\{\left\{ \frac{f(u_1)}{u_1},\ldots, \frac{f(u_r)}{u_r} \right\}\right\} \]
and 
\[ \left\{\left\{ \frac{f(v_1)}{v_1},\ldots, \frac{f(v_r)}{v_r} \right\}\right\} \]
coincide. Hence for any $i \in \{1,\ldots,r\}$ there exist $j \in \{1,\ldots,r\}$ such that
\[ \frac{f(u_i)}{u_i}=\frac{f(v_j)}{v_j}, \]
and since $f(x)$ is scattered then we have that $u_i$ and $v_j$ are $\fq$-proportional and so $u_i=\rho_{i,j} v_j$ for some $\rho_{i,j} \in \fq^*$. Then
\[ a_i=u_i+f(u_i)\gamma=\rho_{i,j}(v_j+f(v_j)\gamma)=\rho_{i,j} b_j \] 
and the assertion is then proved.
\end{proof}

We will see now that the above theorem allows us to construct many non-equivalent examples.\\
Let notice that if we have two $\fq$-subspaces of $\fqn$ of type 
\[
V_{U,\gamma}=\lbrace u+u'\gamma: (u,u')\in U\rbrace
\]
\[
V_{W,\xi}=\lbrace w+w'\xi: (w,w')\in W\rbrace
\]
where $\gamma$ and $\xi$ are generating elements of $\fqn$ over $\fq$ and $U,V$ are $\fq$-subspaces of $\fqk^2$, then Theorem 6.2 in \cite{CPSZSidon} characterizes the equivalence among the orbits defined by them. In particular, in the case in which $U,W$ are the graphs of two non-zero $q$-polynomials $f,g\in\mathcal{L}_{k,q}$, i.e.
\[
U=\lbrace (u,f(u)):u\in\fqk\rbrace\\
\]
and
\[
W=\lbrace (u,g(u)):u\in\fqk\rbrace
\]
then \cite[Theorem 6.2]{CPSZSidon} can be read as follows.

\begin{theorem}
\label{thm:equivalenzasidon}
Let $k$ and $n$ be two positive integers such that $k \mid n$.
Let $f,g\in\mathcal{L}_{k,q}$ be non-zero $q$-polynomials and let $U=\lbrace (u,f(u)):u\in\fqk\rbrace$ and $W=\lbrace (u,g(u)):u\in\fqk\rbrace$ be two $k$-dimensional $\fq$-subspaces of $\mathbb{F}_{q^k}^2$. Consider 
\begin{center}
    $V_{U,\gamma}=\lbrace u+f(u)\gamma: u \in \F_{q^k}\rbrace$\\
    $V_{W,\xi}=\lbrace w+g(w)\xi: w \in \F_{q^k}\rbrace$
\end{center}
where $\gamma,\xi\in\fqn$ are such that $\lbrace 1,\gamma\rbrace$ and $\lbrace 1,\xi\rbrace$ are $\fqk$-linearly independent. Then $V_{U,\gamma}$ and $V_{W,\xi}$ are semilinearly equivalent under the action of $(\lmb,\sigma)\in\fqn^*\times \mathrm{Aut}(\fqn)$ if and only if there exists $A=\left(\begin{aligned}
    \begin{matrix}
        c & d \\
        a & b 
    \end{matrix}
\end{aligned}\right) \in \mathrm{GL}(2,\fqk)$ such that $\xi=\frac{a+b\gamma^\sigma}{c+d\gamma^\sigma}$, $\lmb=\frac{1}{c+d\gamma^\sigma}$ and $U^{\sigma}=\{w A \colon w \in W\}=W \cdot A$.
\end{theorem}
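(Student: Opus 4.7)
The plan is to prove the biconditional in two steps, using the natural identifications of the $2$-dimensional $\fqk$-subspaces $\langle 1,\gamma\rangle_{\fqk}$, $\langle 1,\gamma^\sigma\rangle_{\fqk}$, $\langle 1,\xi\rangle_{\fqk}$ of $\fqn$ with $\fqk^2$. Under these, $V_{U,\gamma}$, $V_{U,\gamma}^\sigma$ and $V_{W,\xi}$ correspond to the graphs $U$ of $f$, $U^\sigma$ of the $q$-polynomial $f^\sigma$ (obtained by applying $\sigma$ to the coefficients of $f$), and $W$ of $g$ respectively. The key identity throughout is $f(u)^\sigma=f^\sigma(u^\sigma)$, which holds because $\sigma\in\mathrm{Aut}(\fqn)$ commutes with all powers of the $q$-Frobenius and restricts to an automorphism of $\fqk$ (since $k\mid n$).

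For $(\Leftarrow)$, I would fix $A,\lambda,\sigma$ satisfying the three conditions and verify directly that $\lambda V_{U,\gamma}^{\sigma}=V_{W,\xi}$. Given $u\in\fqk$, put $v=u^\sigma$; then $(v,f^\sigma(v))\in U^\sigma=W\cdot A$, so there exists $w\in\fqk$ with $(v,f^\sigma(v))=(wc+g(w)a,\,wd+g(w)b)$. A direct expansion yields
\[
(u+f(u)\gamma)^\sigma=v+f^\sigma(v)\gamma^\sigma=w(c+d\gamma^\sigma)+g(w)(a+b\gamma^\sigma)=\frac{1}{\lambda}\bigl(w+g(w)\xi\bigr),
\]
where I have used the stated formulas for $\lambda$ and $\xi$. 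Hence $\lambda V_{U,\gamma}^{\sigma}\subseteq V_{W,\xi}$, and equality follows by matching $\fq$-dimensions (both are $k$-dimensional, since $A$ is invertible and $g\neq 0$).

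For $(\Rightarrow)$, I would assume $\lambda V_{U,\gamma}^{\sigma}=V_{W,\xi}$. In the generic case (where $f$ is not $\fqk$-linear), $V_{U,\gamma}^\sigma$ generates $\langle 1,\gamma^\sigma\rangle_{\fqk}$ as an $\fqk$-vector space, so $\lambda V_{U,\gamma}^\sigma$ generates $\langle \lambda,\lambda\gamma^\sigma\rangle_{\fqk}$, which must then equal the $\fqk$-span $\langle 1,\xi\rangle_{\fqk}$ of $V_{W,\xi}$. Writing the expansions $\lambda=\alpha+\beta\xi$ and $\lambda\gamma^\sigma=\alpha'+\beta'\xi$ with $\alpha,\beta,\alpha',\beta'\in\fqk$, set $M=\left(\begin{smallmatrix}\alpha&\beta\\ \alpha'&\beta'\end{smallmatrix}\right)\in\mathrm{GL}(2,\fqk)$ and $A:=M^{-1}=\left(\begin{smallmatrix}c&d\\ a&b\end{smallmatrix}\right)$. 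Reading off the identity $AM=I$ columnwise immediately gives $\lambda(c+d\gamma^\sigma)=1$ and $\lambda(a+b\gamma^\sigma)=\xi$, which are the claimed formulas; and the equation $\lambda(v+f^\sigma(v)\gamma^\sigma)=w+g(w)\xi$, obtained by unwinding $\lambda V_{U,\gamma}^\sigma=V_{W,\xi}$, splits in the basis $\{1,\xi\}$ as $(w,g(w))=(v,f^\sigma(v))M$, i.e.\ $(v,f^\sigma(v))=(w,g(w))A$, giving $U^\sigma=W\cdot A$.

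The hardest step will be the backward direction, specifically justifying that $\lambda$ and $\lambda\gamma^\sigma$ lie in $\langle 1,\xi\rangle_{\fqk}$. This requires that both $V_{W,\xi}$ and $V_{U,\gamma}^\sigma$ actually span the relevant $2$-dimensional $\fqk$-planes, which fails exactly when $f$ or $g$ is $\fqk$-linear; in that edge case $V_{U,\gamma}$ collapses to a $1$-dimensional $\fqk$-subspace of $\fqn$ and the matrix $A$ has to be chosen less canonically. The forward direction, by contrast, is essentially bookkeeping, the only subtle point being the correct way $\sigma$ propagates through the coefficients of the $q$-polynomials $f$ and $g$.
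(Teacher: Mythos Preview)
The paper does not prove this theorem: it is stated as the specialization to graphs of $q$-polynomials of \cite[Theorem~6.2]{CPSZSidon}, and no argument is given in the present paper. So there is no proof here to compare against; I can only assess your argument on its own.

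Your approach is the natural one and is correct in the generic situation. The $(\Leftarrow)$ direction is a clean bookkeeping check, and the use of $f(u)^\sigma=f^\sigma(u^\sigma)$ is exactly right. In the $(\Rightarrow)$ direction, once you know that the $\fqk$-span of $V_{U,\gamma}^\sigma$ is the full plane $\langle 1,\gamma^\sigma\rangle_{\fqk}$ (which holds precisely when $f$ does not act as an $\fqk$-scalar), your change-of-basis argument producing $A=M^{-1}$ with $\lambda=(c+d\gamma^\sigma)^{-1}$, $\xi=\lambda(a+b\gamma^\sigma)$ and $U^\sigma=W\cdot A$ is correct and complete.

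The gap you flag in the degenerate case is real, and it is not merely a matter of choosing $A$ ``less canonically''. If $f(x)=\rho x$ and $g(x)=\rho' x$ with $\rho,\rho'\in\fqk^*$, then $V_{U,\gamma}=(1+\rho\gamma)\fqk$ and $V_{W,\xi}=(1+\rho'\xi)\fqk$ are one-dimensional $\fqk$-subspaces, and the equivalence $\lambda V_{U,\gamma}^\sigma=V_{W,\xi}$ only pins down $\lambda$ up to an $\fqk^*$-factor. For such a specific $\lambda$ the condition $\lambda^{-1}=c+d\gamma^\sigma$ with $c,d\in\fqk$ forces $\lambda^{-1}\in\langle 1,\gamma^\sigma\rangle_{\fqk}$, which need not hold when $[\fqn:\fqk]\geq 3$; for instance with $\sigma=\mathrm{id}$, $f=g=\mathrm{id}$, $\xi=\gamma^2$ and $\lambda=(1+\gamma^2)/(1+\gamma)$ one checks that $\lambda^{-1}\in\langle 1,\gamma\rangle_{\fqk}$ imposes a nontrivial relation on the coefficients of the minimal polynomial of $\gamma$ over $\fqk$. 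So in this degenerate case the statement, read literally for a \emph{given} pair $(\lambda,\sigma)$, can fail. This does not affect the paper's applications (all the polynomials used there are scattered, hence never $\fqk$-scalar), but it does mean your proof cannot be completed in that case without either adding a hypothesis excluding $\fqk$-linear $f,g$ or reformulating the statement so that $(\lambda,\sigma)$ is existentially quantified along with $A$.
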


As an immediate consequence, we have that if $U$ and $W$ are not $\GammaL(2,q^k)$-equivalent, then the $\fq$-subspaces $V_{f,\gamma}$ and $V_{g,\xi}$ are not semilinear equivalent.\\
We resume in Table \ref{scattpoly} the list of the known examples of scattered polynomials belonging to $\mathcal{L}_{k,q}$. Different entries of the table correspond to $\mathrm{\Gamma L}(2,q^k)$-inequivalent subspaces $U_f=\{(x,f(x)) \,:\, x\in \fqk\}$. If $f$ and $g$ are two linearized polynomials belonging to different entries of Table \ref{scattpoly}, then by Theorem 6.2 in \cite{CPSZSidon} we have that for every $\sigma \in \mathrm{Aut}(\fq)$ it results that
\[ \mathrm{Orb}(V_{f,\gamma})\ne \mathrm{Orb}(V_{g,\gamma}^\sigma), \]
that is, the corresponding codes are inequivalent.

\begin{table}[htp]
\centering
\tabcolsep=0.2 mm
\begin{tabular}{|c|c|c|c|c|c|}
\hline
\hspace{0.5cm} & \hspace{0.2cm}$k$\hspace{0.2cm} & $f(x)$ & \mbox{Conditions} & \mbox{References} \\ \hline
i) & & $x^{q^s}$ & $\gcd(s,k)=1$ & \cite{blokhuis2000scattered} \\ \hline
ii) & & $x^{q^s}+\delta x^{q^{s(k-1)}}$ & $\begin{array}{cc} \gcd(s,k)=1,\\ \mathrm{N}_{q^k/q}(\delta)\neq 1 \end{array}$ & \cite{lunardon2001blocking,lavrauw2015solution}\\ \hline
iii) & $2\ell$ & $\begin{array}{cc}x^{q^s}+x^{q^{s(\ell-1)}}+\\ \delta^{q^\ell+1}x^{q^{s(\ell+1)}}+\delta^{1-q^{2\ell-1}}x^{q^{s(2\ell-1)}}\end{array}$ & $\begin{array}{cc} q \hspace{0.1cm} \text{odd}, \\ \mathrm{N}_{q^{2\ell}/q^\ell}(\delta)=-1,\\ \gcd(s,\ell)=1 \end{array}$ & $\begin{array}{cc}\text{\cite{BZZ,longobardi2021large}}\\\text{\cite{longobardizanellascatt,neri2022extending,ZZ}}\end{array}$\\ \hline
iv) & $6$ & $x^q+\delta x^{q^{4}}$  &  $\begin{array}{cc} q>4, \\ \text{certain choices of} \, \delta \end{array}$ & \cite{csajbok2018anewfamily,bartoli2021conjecture,polverino2020number} \\ \hline
v) & $6$ & $x^{q}+x^{q^3}+\delta x^{q^5}$ & $\begin{array}{cccc}q \hspace{0.1cm} \text{odd}, \\ \delta^2+\delta =1, \\ q \hspace{0.1cm} \text{even}, \\ \text{certain choices of} \, \delta\,\, \&\,\, q  \end{array}$
 & \cite{csajbok2018linearset,MMZ,newnew} \\ \hline
vi) & $8$ & $x^{q}+\delta x^{q^5}$ & $\begin{array}{cc} q\,\text{odd},\\ \delta^2=-1\end{array}$ & \cite{csajbok2018anewfamily} \\ \hline
\end{tabular}
\caption{Known examples of scattered polynomials $f$}
\label{scattpoly}
\end{table}

\begin{remark}
Choosing $n=k(r+1)$ and $f(x)=x^q$, from Theorem \ref{thm:Vfgammarsidon} we get that
\[ V_{x^q,\gamma}=\{ u^q + u \gamma \colon u \in \fqk \} \]
is an $r$-Sidon space.
Note that 
\[ \gamma^{-1}V_{x^q,\gamma}=\{ u+u^q\gamma^{-1} \colon u \in \fqk \} \]
is the example of $r$-Sidon space found in \cite{Roth}.
\end{remark}

In the following theorem, we provide the dimension of the $r$-span of the $r$-Sidon space $V_{f,\gamma}\subseteq\fqk(\gamma)=\fqn$ with $n=kt, t\geq r+1$, obtained by taking $f(x)=x^{q^s}$ with $\gcd(s,k)=1$. We start from the following remark.

\begin{remark}
\label{rmk:sumsubspace}
We recall that for any $U_1,U_2,\dots,U_h,W\leq_q \fqn$ it holds
\[
(U_1+U_2+\dots+U_h)W=U_1W+U_2W+\dots+U_hW.
\]
Indeed, suppose that $\dim_{\fq}(U_i)=k_i$ for any $i\in[h]$, $\dim_{\fq}(W)=k$ and $U_i=\langle u_{i,j}\colon j\in[k_i]\rangle_{\fq}$ for any $i\in[h]$ and $W=\langle w_l\colon l\in [k] \rangle_{\fq}$. Then by definition we have that
\[
U_1+U_2+\dots+U_h=\langle u_{i,j}\colon j\in[k_i], i\in[h]\rangle_{\fq}
\]
and
\[
(U_1+U_2+\dots+U_h)W=\langle u_{i,j}w_l\colon j\in[k_i], i\in[h], l\in[k]\rangle=U_1W+U_2W+\dots+U_hW.
\]
\end{remark}

\begin{theorem}
\label{thm:monomialrspan}
Let $V=V_{x^{q^s},\gamma}=\lbrace u+\gamma u^{q^s}\colon u\in\fqk\rbrace\subseteq \fqk(\gamma)=\fqn$ where $n=kt$, $t\geq r+1$ and $\gcd(s,k)=1$. Then
\[
V^r=V_{x^{q^s},\gamma}^r=\gamma\fqk\oplus\dots\oplus\gamma^{r-1}\fqk\oplus V_{x^{q^s},\gamma^{r}}
\]
and
\[
\dim_{\fq}(V_{x^{q^s},\gamma}^r)=rk
\]
for any $2\leq r\leq t-1$.\\
Moreover, denoted by 
\[
\Bar{t}=\min\lbrace s\colon V^s=V^{s+1}\rbrace,
\]
if $\mathrm{N}_{q^n/q}(\gamma)\neq (-1)^{n}$ then $\Bar{t}=t$, whereas if $\mathrm{N}_{q^n/q}(\gamma)=(-1)^{n}$, then $\Bar{t}=t+1$.
\end{theorem}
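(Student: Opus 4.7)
Let $W_r := \gamma\fqk \oplus \cdots \oplus \gamma^{r-1}\fqk \oplus V_{x^{q^s},\gamma^r}$ denote the claimed decomposition. The plan is to show $V^r\subseteq W_r$ formally and $V^r\supseteq W_r$ by induction on $r$, using $V^r=V\cdot V^{r-1}$ and distributing via Remark \ref{rmk:sumsubspace}. The dimension statement then follows because the summands lie in distinct $\fqk$-components of the $\fqk$-basis $\{1,\gamma,\ldots,\gamma^{t-1}\}$ of $\fqn$ (valid when $t\geq r+1$), each of $\fq$-dimension $k$. For the upper bound I expand
\[
\prod_{i=1}^{r}\left(u_i+\gamma u_i^{q^s}\right)=\sum_{S\subseteq[r]}\gamma^{|S|}\prod_{i\in S}u_i^{q^s}\prod_{i\notin S}u_i,
\]
and observe that the $\gamma^0$-coefficient is $u_1\cdots u_r$ while the $\gamma^r$-coefficient is $(u_1\cdots u_r)^{q^s}$. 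Since $x\mapsto x^{q^s}$ is $\fq$-linear, this relation persists under $\fq$-linear combinations of products, so every element of $V^r$ lies in $W_r$.

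For the reverse inclusion I induct on $r$. The base case $r=2$ follows from Theorem \ref{lowerboundSidon}: since $V$ is a Sidon space by Theorem \ref{thm:Vfgammarsidon}, $\dim_{\fq}(V^2)\geq 2k=\dim_{\fq}(W_2)$, forcing $V^2=W_2$. For the inductive step at $r\geq 3$ assuming $V^{r-1}=W_{r-1}$, I distribute $V\cdot W_{r-1}$ over its summands. The key calculation $V\fqk=\fqk+\gamma\fqk$ holds because $1+\gamma$ and $u+\gamma u^{q^s}$ with $u\in\fqk\setminus\fq$ are $\fqk$-linearly independent (using $u^{q^s}\neq u$ for $u\notin\fq$, which holds because $\gcd(s,k)=1$). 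Therefore $V\cdot\gamma^j\fqk=\gamma^j\fqk+\gamma^{j+1}\fqk$, and summing for $j=1,\ldots,r-2$ produces $\gamma\fqk+\cdots+\gamma^{r-1}\fqk$. The remaining summand $V\cdot V_{x^{q^s},\gamma^{r-1}}$ contains $(a+\gamma a^{q^s})(u+\gamma^{r-1}u^{q^s})$, which modulo $\gamma\fqk+\gamma^{r-1}\fqk$ equals $au+\gamma^r(au)^{q^s}\in V_{x^{q^s},\gamma^r}$; letting $au$ range over $\fqk$ fills in the last summand.

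For the threshold $\bar t$, I rerun the same decomposition at $r=t$, where $\gamma^t$ must be reduced via the minimal polynomial $p(x)=x^t+p_{t-1}x^{t-1}+\cdots+p_0\in\fqk[x]$ of $\gamma$ over $\fqk$. Modulo $\gamma\fqk+\cdots+\gamma^{t-1}\fqk$, the $\gamma^0$-component of $V^t$ equals the image of the $\fq$-linear map $L\colon\fqk\to\fqk$, $L(w)=w-p_0w^{q^s}$; its kernel is $\{0\}\cup\{w:w^{q^s-1}=p_0^{-1}\}$, which is trivial or one-dimensional over $\fq$ depending on whether $p_0$ is a $(q^s-1)$-th power in $\fqk^\ast$, that is, whether $\N_{q^k/q}(p_0)=1$ (using $\gcd(q^s-1,q^k-1)=q-1$ from $\gcd(s,k)=1$). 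Since $p_0=(-1)^t\N_{q^n/q^k}(\gamma)$, the tower law gives $\N_{q^n/q}(\gamma)=(-1)^n\N_{q^k/q}(p_0)$, so the condition $\N_{q^k/q}(p_0)=1$ is equivalent to $\N_{q^n/q}(\gamma)=(-1)^n$. If this fails, $L$ is bijective and $V^t=\fqn$, yielding $\bar t=t$. If it holds, $\dim V^t=n-1$, and I verify $V^{t+1}=\fqn$ by noting that $V\gamma^{t-1}\fqk$ contributes $\gamma^t\fqk$, whose reduction via the minimal polynomial has $\gamma^0$-component $-p_0\fqk=\fqk$ (as $p_0\neq 0$); this fills the missing direction, giving $\bar t=t+1$. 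The main obstacle is the inductive base case, which genuinely requires the Sidon bound rather than Kneser alone, since Kneser would deliver only $\dim V^r\geq rk-1$.
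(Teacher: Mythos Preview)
Your proof is correct and follows essentially the same route as the paper: both argue $V^r\subseteq W_r$ by inspecting the $\gamma^0$- and $\gamma^r$-coefficients of a product, both use Theorem~\ref{lowerboundSidon} for the base case $r=2$, both establish $V\fqk=\fqk\oplus\gamma\fqk$ (you via an explicit linear-independence check, the paper by noting $V$ is not $\fqk$-linear) to drive the induction, and both reduce the threshold question to the invertibility of $w\mapsto w-p_0w^{q^s}$ together with the identity $\N_{q^k/q}(p_0)=(-1)^n\N_{q^n/q}(\gamma)$.

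The one genuine difference is in the final step, the case $\N_{q^n/q}(\gamma)=(-1)^n$. You argue \emph{directly} that $V^{t+1}=\fqn$: since $V^{t+1}\supseteq \sum_{j=1}^{t-1}V\gamma^{j}\fqk=\gamma\fqk+\cdots+\gamma^{t}\fqk$, and the $\gamma^{0}$-component of $\gamma^{t}\fqk$ is $-p_0\fqk=\fqk$, the sum is all of $\fqn$. The paper instead argues \emph{indirectly}: if $V^{t+1}=V^t$ then $V^t$ is $\fq(V)$-linear, but $\dim_{\fq}(V^t)=n-1$ is coprime to $n$, so Lemma~\ref{lem:hv=fq} forces $H(V^t)=\fq$, a contradiction. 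Your direct argument is slightly more elementary and self-contained; the paper's route illustrates how the stabilizer machinery from Section~\ref{sec:bounds} can be reused.
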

\begin{proof}
Let $r=2$ and let notice that 
\[
V^2=\langle uv+(uv^{q^s}+u^{q^s}v)\gamma+(uv)^{q^s}\gamma^2\colon u,v\in\fqk\rangle_{\fq}\subseteq \lbrace w+w^{q^s}\gamma^2+z\gamma:w,z\in\fqk\rbrace=\gamma\fqk+V_{x^{q^s},\gamma^2}.
\]
Moreover, since $t\geq 3$, the elements $1,\gamma,\gamma^2$ are $\fqk$-linearly independent, thus $\dim_{\fq}(\gamma\fqk+V_{x^{q^s},\gamma^2})=\dim_{\fq}(\gamma\fqk)+\dim_{\fq}(V_{x^{q^s},\gamma^2})=2k$. Then $\dim_{\fq}(V^2)\leq 2k$. On the other hand, by Theorem \ref{lowerboundSidon}, we have $\dim_{\fq}(V^2)\geq 2k$. Then it follows that \[
\dim_{\fq}(V^2)=2k
\]
and hence
\[
V^2=\gamma\fqk\oplus V_{x^{q^s},\gamma^2}.
\]
Now, let $r\geq 3$ and $t\geq r+1$ and let proceed by induction on $r$. Suppose that 
\begin{equation}
    \label{eq:inductiononr}
    V^{r-1}=\gamma\fqk+\dots+\gamma^{r-2}\fqk+V_{x^{q^s},\gamma^{r-1}}
\end{equation}
and hence
\begin{equation}
    \label{eq:inductiondimension}
    \dim_{\fq}(V^{r-1})=(r-1)k.
\end{equation}
Let consider $\fqk V=\langle a u+a u^{q^s}\gamma\colon u\in\fqk, a\in \fqk\rangle_{\fq}$. Note that $\fqk V$ is an $\fqk$-subspace such that
\[
V\subseteq \fqk V\subseteq \fqk\oplus\gamma\fqk 
\]
hence either $\fqk V=V$ or $\fqk V=\fqk\oplus\gamma\fqk$. If $\fqk V=V$ then $V$ is $\fqk$-linear and this is not true since $\gcd(s,k)=1$. Therefore 
\begin{equation}
\label{eq:fqkV}
  \fqk V=\fqk\oplus\gamma\fqk  
\end{equation}
and in particular 
\begin{equation}
\label{eq:gammafqkVs}
  \gamma^l \fqk V=\gamma^l\fqk\oplus\gamma^{l+1}\fqk  
\end{equation}
for any $1\leq l\leq r-2$. \\
Moreover, by definition we have that
\begin{equation}
\label{eq:Vr-1V}
    \begin{aligned}
    (V_{x^{q^s},\gamma^{r-1}})V+\gamma\fqk+\gamma^{r-1}\fqk&=\langle (u+u^{q^s}\gamma^{r-1})(v+v^{q^s}\gamma)\colon u,v\in\fqk \rangle_{\fq}+\gamma\fqk+\gamma^{r-1}\\
    &=\langle uv+uv^{q^s}\gamma+vu^{q^s}\gamma^{r-1}+(uv)^{q^s}\gamma^r\colon u,v\in\fqk\rangle_{\fq}+\gamma\fqk+\gamma^{r-1}\fqk \\
    &=\langle a + a^{q^s}\gamma^r\colon a\in\fqk\rangle_{\fq}+\gamma\fqk+\gamma^{r-1}\fqk.
    \end{aligned}
\end{equation}
Therefore, by Remark \eqref{rmk:sumsubspace} and by Equations \eqref{eq:inductiononr}, \eqref{eq:gammafqkVs} and \eqref{eq:Vr-1V} we get that
\begin{equation}
\label{eq:v3}
\begin{aligned}
    V^r=(V^{r-1})V&=(V_{x^{q^s},\gamma^{r-1}}+\gamma\fqk+\dots+\gamma^{r-2}\fqk)V\\
            &= V_{x^{q^s},\gamma^{r-1}}V+\gamma\fqk V+\dots+\gamma^{r-2}\fqk V\\
            &= \langle a + a^{q^s}\gamma^r\colon a\in\fqk\rangle_{\fq}+\gamma\fqk+\gamma^2\fqk+\dots+\gamma^{r-2}\fqk+\gamma^{r-1}\fqk\\
            &=V_{x^{q^s},\gamma^{r}}+ \gamma\fqk+\dots+\gamma^{r-1}\fqk
\end{aligned}
\end{equation}
Since $1,\gamma,\dots,\gamma^{r}$ are $\fqk$-linearly independent, we have that
\[
\dim_{\fq}(V^r)=rk.
\]
Now, if we consider $r=t-1$, by Equation \eqref{eq:fqkV} and Remark \ref{rmk:sumsubspace} we get
\begin{equation}
\label{eq:vt}
    \begin{aligned}
     V^{t}=(V^{t-1})V&=(\gamma\fqk+\dots+\gamma^{t-2}\fqk+V_{x^{q^s},\gamma^{t-1}})V\\
     &=\gamma\fqk V+\dots+\gamma^{t-2}\fqk V+V_{x^{q^s},\gamma^{t-1}}V\\
     &=\gamma(\fqk+\gamma\fqk)+\dots+\gamma^{t-2}(\fqk+\gamma\fqk)+\langle (u+u^{q^s}\gamma)(v+v^{q^s}\gamma^{t-1})\colon u,v\in\fqk\rangle_{\fq}\\
     &=\gamma\fqk+\gamma^2\fqk+\dots+\gamma^{t-1}\fqk+\lbrace w+w^{q^s}\gamma^t\colon w\in\fqk\rbrace.\\
    \end{aligned}
\end{equation}
Let $p(x)=a_0+a_1x+\dots+a_{t-1}x^{t-1}+x^{t}$ be the minimal polynomial of $\gamma$ over $\fqk$, then Equation \eqref{eq:vt} becomes
\begin{equation}
\label{eq:vta0}
    \begin{aligned}
     V^{t}&=\gamma\fqk+\dots+\gamma^{t-1}\fqk+\lbrace w+w^{q^s}\gamma^t\colon w\in\fqk\rbrace\\
     &=\gamma\fqk+\dots+\gamma^{t-2}\fqk+\lbrace w-w^{q^s}a_0\colon w\in\fqk\rbrace
    \end{aligned}
\end{equation}
Note that $\lbrace w-wa_0\colon w\in\fqk\rbrace=\mathrm{Im}(\phi_{a_0})$ where
\[
\begin{aligned}
    \phi_{a_0}\colon \fqk &\to \fqk \\
                    w&\mapsto w-a_0w^{q^s}
\end{aligned}       
\]
and this map is invertible if and only if $\mathrm{N}_{q^k/q}(a_0)\neq 1$. Moreover, since $a_0$ is the constant term of $p(x)$, we also have that
\[
a_0=(-1)^t\gamma^{q^{(t-1)k}+\dots+q^k+1}=(-1)^t\gamma^{\frac{q^{kt}-1}{q^k-1}}=(-1)^t\gamma^{\frac{q^n-1}{q^k-1}}
\]
and so 
\[ \mathrm{N}_{q^{k}/q}(a_0)=(-1)^{kt}\mathrm{N}_{q^{n}/q}(\gamma). \]
In particular,
\[  \mathrm{N}_{q^{k}/q}(a_0)=1 \text{ if and only if } \mathrm{N}_{q^{n}/q}(\gamma)=(-1)^{kt} \]
and so $\phi_{a_0}$ is invertible if and only if $\mathrm{N}_{q^{n}/q}(\gamma)\neq (-1)^{kt}$. \\
Hence, if $\mathrm{N}_{q^{n}/q}(\gamma)\neq (-1)^{kt}$ then $\mathrm{Im}(\phi_{a_0})=\fqk$ and so from Equation \eqref{eq:vta0} we get
\[
V^t=\fqk+\gamma\fqk+\dots+\gamma^{t-1}\fqk=\fqn.
\]
and clearly $\Bar{t}=t$.\\
Whereas, if $\mathrm{N}_{q^{n}/q}(\gamma)=(-1)^{kt}$, since $\phi_{a_0}(x)=x+x^{q^s}a_0$ is a $q^s$-polynomial of $q^s$-degree equal to $1$ (see e.g. \cite{gow}), we have that $\dim_{\fq}(\mathrm{Im}(\phi_{a_0}))=k-1$ and so by Equation \eqref{eq:vta0} it follows that $\dim_{\fq}(V^t)=(t-1)k+k-1=tk-1=n-1$. Now, if $V^{t+1}=V^t$ then, once we fix $\lbrace v_1,\dots,v_k\rbrace$ an $\fq$-basis of $V$, we have that
\[
V^{t+1}=v_1V^t+\dots+v_kV^t
\]
thus $V^t$ is $\fq(V)$-linear. Since $\gcd(n,n-1)=1$ by Lemma \ref{lem:hv=fq} we have a contradiction. Therefore, $V^{t+1}=\fqn$ and so $\Bar{t}=t+1$.
\end{proof}

In order to compare the dimension of the $r$-span of $V_{x^{q^s},\gamma}$ with the bound of Theorem \ref{thm:lowerbound} we need the following results.

\begin{lemma}
    \label{lem:stabpowergamma}
Let $\gamma\in\fqn$ be a generating element of $\fqn$ over $\fqk$ and let $n=kt$. Denote by
\[
U:=\fqk +\gamma\fqk+\dots+\gamma^{l-1}\fqk+\gamma^l\fqk
\]
for some $l<t-1$, then
\[
H(U)=\fqk.
\]
\end{lemma}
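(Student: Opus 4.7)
The plan is to verify the two inclusions $\fqk\subseteq H(U)$ and $H(U)\subseteq \fqk$ separately. The first one is immediate: each summand $\gamma^i\fqk$ of $U$ is already closed under multiplication by $\fqk$, so $aU=U$ for every $a\in\fqk^*$.

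For the nontrivial inclusion, my approach is to exploit that $\{1,\gamma,\ldots,\gamma^{t-1}\}$ is an $\fqk$-basis of $\fqn$ (since $\gamma$ generates $\fqn$ over $\fqk$ and $n=kt$), so that $U$ is precisely the subspace of $\fqn$ consisting of elements whose coordinates on $\gamma^{l+1},\ldots,\gamma^{t-1}$ vanish in this basis. Given $x\in H(U)$, since $1\in U$ I first obtain $x=x\cdot 1\in xU=U$, hence I can write
\[ x=a_0+a_1\gamma+\cdots+a_l\gamma^l,\qquad a_i\in\fqk. \]
I would then show that all $a_i$ with $i\geq 1$ vanish by contradiction: assume there exists a largest $j_0\in\{1,\ldots,l\}$ with $a_{j_0}\neq 0$, and consider the product $x\gamma^{l-j_0+1}$. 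The exponent $l-j_0+1$ lies in $\{1,\ldots,l\}$, so $\gamma^{l-j_0+1}\in U$ and therefore $x\gamma^{l-j_0+1}\in xU=U$. Expanding yields
\[ x\gamma^{l-j_0+1}=\sum_{i=0}^{j_0}a_i\,\gamma^{i+l-j_0+1}, \]
a sum in which the largest exponent is $l+1$, with coefficient $a_{j_0}$.

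The delicate step (and the only real obstacle) is making sure that the coefficient on $\gamma^{l+1}$ is not altered by a reduction modulo the minimal polynomial of $\gamma$ over $\fqk$, which has degree $t$. This is exactly what the hypothesis $l<t-1$ guarantees: it forces $l+1\leq t-1$, so every $\gamma^{i+l-j_0+1}$ appearing in the expansion above is a genuine basis element of $\fqn$ over $\fqk$ and no reduction occurs. Comparing the $\gamma^{l+1}$ coefficient of the two equal expressions for $x\gamma^{l-j_0+1}$ (on one side it is $a_{j_0}$, on the other it is $0$ because the product lies in $U$) then yields $a_{j_0}=0$, contradicting the choice of $j_0$. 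Hence $x\in\fqk$, which completes the argument.
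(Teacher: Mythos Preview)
Your proof is correct and follows essentially the same approach as the paper: both write the stabilizing element in the $\fqk$-basis $\{1,\gamma,\ldots,\gamma^{t-1}\}$ and use that multiplication by a suitable power of $\gamma$ forces a nonzero coefficient onto $\gamma^{l+1}\notin U$, with the hypothesis $l<t-1$ ensuring no reduction occurs. The only cosmetic difference is that the paper eliminates $a_l,a_{l-1},\ldots,a_1$ iteratively (multiplying by $\gamma,\gamma^2,\ldots$ in turn), whereas you go directly to the largest nonzero index $j_0$ and derive a contradiction in one step.
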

\begin{proof}
Clearly $\fqk\subseteq H(U)$. Let $\alpha\in H(U)\setminus\lbrace 0\rbrace$, then $\alpha U=U$, i.e.
\begin{equation}
\label{eq:alfau=u}
\fqk +\gamma\fqk+\dots+\gamma^{l-1}\fqk+\gamma^{l}\fqk=\alpha \fqk + \alpha\gamma\fqk+\dots+\alpha\gamma^{l-1}\fqk+\alpha\gamma^{l}\fqk.
\end{equation}
In particular, $\alpha\in U$ then there exist $a_0,a_1,\dots,a_{l}\in\fqk$ such that
\[
    \alpha=a_0+a_1\gamma+\dots+a_{l}\gamma^{l}
\]
then
\begin{equation}
\label{eq:system1}
    \begin{cases}
    \alpha=a_0+a_1\gamma+\dots+a_{l}\gamma^{l}\\
    \alpha\gamma=a_0\gamma+a_1\gamma^2+\dots+a_{l}\gamma^{l+1}\\
    \alpha\gamma^2=a_0\gamma^2+a_1\gamma^3+\dots+a_{l-1}\gamma^{l+1}+a_l\gamma^{l+2}\\
    \vdots\\
    \alpha\gamma^l=a_0\gamma^{l}+a_1\gamma^{l+1}+\dots+a_{l-1}\gamma^{2l-1}+a_l\gamma^{2l}
    \end{cases}
\end{equation}
By Equation \eqref{eq:alfau=u} we have that also $\alpha\gamma,\alpha\gamma^2,\dots,\alpha\gamma^l\in U$. Since $\alpha\gamma\in U$ and $1,\gamma,\dots,\gamma^l,\gamma^{l+1}$ are $\fqk$-linearly independent by the second equation of System \eqref{eq:system1} we get that $a_l=0$. Then, since $a_l=0$ and $1,\gamma,\dots,\gamma^{l+1}$ are $\fqk$-linearly independent and $\alpha\gamma^2\in U$, by the third equation of System \eqref{eq:system1}, $a_{l-1}=0$. By reiterating this argument we get that also $a_{l-2}=\dots=a_1=0$. Thus $\alpha=a_0\in\fqk$.
\end{proof}

\begin{theorem}
    \label{thm:stabilizzmonomio}
Let $V=V_{x^{q^s},\gamma}=\lbrace u+\gamma u^{q^s}\colon u\in\fqk\rbrace\subseteq \fqk(\gamma)=\fqn$ where $n=kt$, $t\geq r+1$ and $\gcd(s,k)=1$. Then
\[
H(V_{x^{q^s},\gamma}^r)=\fq.
\]
\end{theorem}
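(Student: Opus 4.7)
The plan is to exploit the explicit decomposition
\[
V^r=\gamma\fqk\oplus\gamma^2\fqk\oplus\cdots\oplus\gamma^{r-1}\fqk\oplus V_{x^{q^s},\gamma^r}
\]
provided by Theorem \ref{thm:monomialrspan}. Set $L:=\gamma\fqk\oplus\cdots\oplus\gamma^{r-1}\fqk$, so that every element of $V^r$ admits a unique representation
\[
v=b_0+b_1\gamma+\cdots+b_{r-1}\gamma^{r-1}+b_0^{q^s}\gamma^r
\]
in the $\fqk$-basis $\{1,\gamma,\ldots,\gamma^{t-1}\}$ of $\fqn$ (recall $r\leq t-1$). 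The strategy is to isolate the largest $\fqk$-linear subspace of $V^r$, argue that it must be preserved by every $\alpha\in H(V^r)$, and then reduce to Lemma \ref{lem:stabpowergamma}.

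First, I would verify that the maximal $\fqk$-subspace contained in $V^r$ is precisely $L$. If $v\in V^r$ satisfies $\lambda v\in V^r$ for every $\lambda\in\fqk$, then equating $\gamma^r$-coefficients forces $\lambda b_0^{q^s}=\lambda^{q^s}b_0^{q^s}$ for all $\lambda\in\fqk$; since $\gcd(s,k)=1$ the fixed field of $x\mapsto x^{q^s}$ on $\fqk$ is $\fq\neq\fqk$ (assuming $k\geq 2$; the case $k=1$ is trivial), so this forces $b_0=0$, i.e.\ $v\in L$. The reverse inclusion is immediate.

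Now let $\alpha\in H(V^r)\setminus\{0\}$. Since $L$ is $\fqk$-linear, so is $\alpha L$; and $\alpha L\subseteq\alpha V^r=V^r$, so by maximality $\alpha L\subseteq L$. Writing $L=\gamma L'$ with $L':=\fqk+\gamma\fqk+\cdots+\gamma^{r-2}\fqk$ yields $\alpha L'\subseteq L'$, upgraded to equality by finite-dimensionality. Thus $\alpha\in H(L')$, and Lemma \ref{lem:stabpowergamma}, whose hypothesis $r-2<t-1$ is implied by $t\geq r+1$ (with the trivial identity $H(\fqk)=\fqk$ covering the case $r=2$), gives $H(L')=\fqk$, so $\alpha\in\fqk$.

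Finally, I would test this $\alpha$ on the element $1+\gamma^r\in V^r$ (corresponding to $b_0=1$, $b_1=\cdots=b_{r-1}=0$): the condition $\alpha(1+\gamma^r)=\alpha+\alpha\gamma^r\in V^r$ forces the $\gamma^r$-coefficient to be the $q^s$-th power of the constant term, i.e.\ $\alpha=\alpha^{q^s}$. Combined with $\alpha\in\fqk$ and $\gcd(s,k)=1$, this yields $\alpha\in\fq$. The main obstacle I anticipate is the identification of the maximal $\fqk$-subspace of $V^r$ with $L$ — once that is in place, the rest is a clean reduction to Lemma \ref{lem:stabpowergamma} together with an elementary Frobenius fixed-field computation on $\fqk$.
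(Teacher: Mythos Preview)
Your proof is correct and takes a genuinely cleaner route than the paper's. Both arguments start from the decomposition of Theorem~\ref{thm:monomialrspan}, both invoke Lemma~\ref{lem:stabpowergamma} to land $\alpha$ in $\fqk$, and both finish with the same Frobenius fixed-field computation $\alpha^{q^s}=\alpha$. The difference is in how $\alpha\in\fqk$ is obtained. The paper argues by contradiction: it assumes $\alpha\notin\fqk$, observes that then $\alpha L\neq L$ (else Lemma~\ref{lem:stabpowergamma} already gives $\alpha\in\fqk$), concludes that $V^r=L+\alpha L$, and then derives a contradiction by squeezing $V_{x^{q^s},\gamma^r}$ into a one-dimensional $\fqk$-subspace via an intersection-and-dimension count. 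Your argument bypasses this case analysis entirely by first characterising $L$ intrinsically as the unique maximal $\fqk$-subspace of $V^r$; since $\alpha L$ is automatically $\fqk$-linear and sits inside $V^r$, maximality forces $\alpha L=L$ in one stroke. What your approach buys is brevity and a transparent structural reason for why the stabiliser respects $L$; what the paper's approach buys is that it never needs to identify the maximal $\fqk$-subspace explicitly, at the cost of a longer contradiction argument.
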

\begin{proof}
Let $\alpha\in H(V_{x^{q^s},\gamma}^r)\setminus\lbrace 0\rbrace$, then
\begin{equation}
\label{alfa vr =vr}
    \alpha V_{x^{q^s},\gamma}^r=V_{x^{q^s},\gamma}^r.
\end{equation}
First, let notice that $\alpha\in\fqk$. Indeed, suppose by contradiction that $\alpha\in\fqn\setminus\fqk$. By Theorem \ref{thm:monomialrspan}, Equation \eqref{alfa vr =vr} implies that
\begin{equation}
    \label{alfa vr=vr 2}
\alpha V_{x^{q^s},\gamma}^r=\alpha\gamma\fqk\oplus\dots\oplus\alpha\gamma^{r-1}\fqk\oplus\alpha V_{x^{q^s},\gamma^r}=\gamma\fqk\oplus\dots\oplus\gamma^{r-1}\fqk\oplus V_{x^{q^s},\gamma^r}.
\end{equation}
Therefore, denoted by $W:=\gamma\fqk+\dots+\gamma^{r-1}\fqk+V_{x^{q^s},\gamma^r}$, by Theorem \ref{thm:monomialrspan} we have that
$\dim_{\fq}(W)=rk$.\\
On one hand, Equation \eqref{alfa vr=vr 2} implies that
\[
\alpha\gamma\fqk+\dots+\alpha\gamma^{r-1}\fqk\subseteq W.
\]
On the other hand, we also have that
\[
\gamma\fqk+\dots+\gamma^{r-1}\fqk\subseteq W.
\]
Note that if $\alpha\gamma\fqk+\dots+\alpha\gamma^{r-1}\fqk=\gamma\fqk+\dots+\gamma^{r-1}\fqk$ then $\alpha\in H(\gamma\fqk+\dots+\gamma^{r-1}\fqk)$. By Lemma \ref{lem:stabpowergamma} and Remark \ref{rmk:h(v)=h(gamma-1v)} it follows that $\alpha\in\fqk$, a contradiction. Thus, 
\begin{equation}
    \label{eq:w}
   W=\alpha\gamma\fqk+\dots+\alpha\gamma^{r-1}\fqk+\gamma\fqk+\dots+\gamma^{r-1}\fqk.
\end{equation}
By definition $V_{x^{q^s},\gamma^r}\subseteq \fqk+\gamma^r\fqk$ and by Equation \eqref{alfa vr=vr 2}, $V_{x^{q^s},\gamma^r}\subseteq W$. Therefore, by Equation \eqref{eq:w} it follows that
\[
V_{x^{q^s},\gamma^r}\subseteq (\gamma\fqk+\dots+\gamma^{r-1}\fqk+\alpha\gamma\fqk+\dots+\alpha\gamma^{r-1}\fqk)\cap(\fqk+\gamma^r\fqk).
\]
Let notice that
\[
1\leq\dim_{\fqk}((\gamma\fqk+\dots+\gamma^{r-1}\fqk+\alpha\gamma\fqk+\dots+\alpha\gamma^{r-1}\fqk)\cap(\fqk+\gamma^r\fqk))\leq 2
\]
Moreover, if $\dim_{\fqk}((\gamma\fqk+\dots+\gamma^{r-1}\fqk+\alpha\gamma\fqk+\dots+\alpha\gamma^{r-1}\fqk)\cap(\fqk+\gamma^r\fqk))=2$, then
\[
\fqk+\gamma^r\fqk\subseteq \gamma\fqk+\dots+\gamma^{r-1}\fqk+\alpha\gamma\fqk+\dots+\alpha\gamma^{r-1}\fqk
\]
i.e. $\fqk+\gamma\fqk+\dots+\gamma^r\fqk\subseteq W$, a contradiction since $\dim_{\fq}(W)=rk$.
Then $\dim_{\fqk}((\gamma\fqk+\alpha\gamma\fqk+\dots+\alpha\gamma^{r-1}\fqk)\cap(\fqk+\gamma^r\fqk))=1=\dim_{\fqk}(V_{x^{q^s},\gamma^r})$ and so
\[
V_{x^{q^s},\gamma^r}=\xi\fqk
\]
for some $\xi\in\fqn$. This implies that $V_{x^{q^s},\gamma^r}$ is $\fqk$-linear and this is not true since $\gcd(k,s)=1$. Therefore $\alpha\in\fqk$.\\
Now, let prove that $\alpha\in\fq$. Since $\alpha\in\fqk$ and $ 1,\gamma,\dots,\gamma^r$ are $\fqk$-linearly independent, Equation \eqref{alfa vr =vr} implies that
\[
\alpha V_{x^{q^s},\gamma^r}=V_{x^{q^s},\gamma^r}.
\]
This means that for any $u\in\fqk$ there exists $v\in\fqk$ such that
\[
\alpha u + \gamma^r\alpha u^{q^s}=v+\gamma^rv^{q^s}.
\]
This implies that
$\alpha u=v$ and $\alpha u^{q^s}=v^{q^s}=\alpha^{q^s}u^{q^s}$. Thus, $\alpha^{q^s}=\alpha$ and since $\gcd(k,s)=1$ then $\alpha\in\fq$.
\end{proof}

By MAGMA computations, discussed in the next section, it seems that the lower bound provided in Theorem \ref{thm:lowerbound} is not tight. However, by applying Theorem \ref{thm:lowerbound} to the case of $r$-Sidon spaces obtained by the monomial, as in Theorem \ref{thm:monomialrspan}, by Theorem \ref{thm:stabilizzmonomio} we get that $\dim_{\fq}(V_{x^{q^s},\gamma}^r)\geq rk-(r-2)h(V_{x^{q^s},\gamma}^r)=rk-(r-2)$ . Therefore, Theorem \ref{thm:monomialrspan} shows that $V_{x^{q^s},\gamma}^r$ has dimension very close to the lower bound given in Theorem \ref{thm:lowerbound}.\\
We have shown in Theorem \ref{thm:lowerboundprime} that in the case of prime degree extensions for any $r$-Sidon space $V$ we have that $\dim_{\fq}(V^r)\geq r\dim_{\fq}(V)$. We believe that the same lower bound holds also in the case of composite degree extensions, but we weren't able to prove it up to now. If this is true, Theorem \ref{thm:monomialrspan} would provide examples of $r$-Sidon spaces with $r$-span of minimum dimension, as it happens for the case $r=2$, see \cite{Roth}. \\

Now, we want to provide other examples of the type $V_{f,\gamma}$ such that $\dim_{\fq}(V_{f,\gamma}^r)=rk$, as happens for the case of the monomial $x^{q^s}$ and not necessarily Sidon spaces. We will consider the subspace obtained by taking $f(x)=\mathrm{Tr}_{q^k/q}(x)$. In order to do so, let start with the following remark and lemma.

\begin{remark}
\label{rmk:nucleotraccia}
Recall that 
\[
\begin{aligned}
\mathrm{Tr}_{q^k/q} \colon \mathbb{F}_{q^k}&\to \mathbb{F}_{q}\\
x&\mapsto  x+ x^q+\ldots+x^{q^{k-1}}
\end{aligned}
\]
is an $\fq$-linear map of $\fqk$ in $\fq$ and $\dim_{\fq}(\ker(\mathrm{Tr}_{q^k/q}))=k-1$.
\end{remark}

\begin{lemma}
    \label{thm:traccianosidon}
Let $V=\lbrace u+\mathrm{Tr}_{q^k/q}(u)\gamma\colon u\in\fqk\rbrace\subseteq \fqk(\gamma)=\fqn$. Then
\[
\dim_{\fq}(V\cap\alpha V)= k-2.
\]
for any $\alpha\in\fqk\setminus\fq$.
In particular, if $k>3$ then $V$ is not a Sidon space.
\end{lemma}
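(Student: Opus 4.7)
The plan is to compute $V\cap \alpha V$ directly by exploiting that $\{1,\gamma\}$ is $\fqk$-linearly independent in $\fqn=\fqk(\gamma)$, and then reinterpret the resulting intersection in terms of the kernel of the trace map.

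First I would parametrize: an element $w\in V\cap \alpha V$ can be written both as $w=v+\mathrm{Tr}_{q^k/q}(v)\gamma$ (with $v\in\fqk$) and as $w=\alpha(u+\mathrm{Tr}_{q^k/q}(u)\gamma)=\alpha u+\alpha\mathrm{Tr}_{q^k/q}(u)\gamma$ (with $u\in\fqk$). Since $\alpha \in \fqk$ and $\{1,\gamma\}$ is $\fqk$-linearly independent, comparing coefficients forces $v=\alpha u$ and $\mathrm{Tr}_{q^k/q}(\alpha u)=\alpha\mathrm{Tr}_{q^k/q}(u)$. The left-hand side lies in $\fq$; hence $\alpha\mathrm{Tr}_{q^k/q}(u)\in \fq$, and since $\alpha\in\fqk\setminus\fq$, this forces $\mathrm{Tr}_{q^k/q}(u)=0$ and consequently also $\mathrm{Tr}_{q^k/q}(\alpha u)=0$. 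Noting that the map $u\mapsto u+\mathrm{Tr}_{q^k/q}(u)\gamma$ is $\fq$-injective by $\fqk$-linear independence of $\{1,\gamma\}$, this yields the identification
\[
\dim_{\fq}(V\cap \alpha V)=\dim_{\fq}\!\left(\ker(\mathrm{Tr}_{q^k/q})\cap \alpha^{-1}\ker(\mathrm{Tr}_{q^k/q})\right).
\]

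Next I would analyze this intersection of hyperplanes. By Remark \ref{rmk:nucleotraccia}, $K:=\ker(\mathrm{Tr}_{q^k/q})$ has $\fq$-dimension $k-1$, and the same holds for $\alpha^{-1}K$. Their intersection therefore has dimension either $k-1$ (if the two hyperplanes coincide) or $k-2$. They coincide precisely when $\alpha\in H(K)$. The key step is then to show $H(K)=\fq$: since $H(K)$ is a subfield of $\fqk$, its degree $h(K)=[H(K):\fq]$ must divide $k$ (as $H(K)\subseteq \fqk$) and, by Remark \ref{rmk:h(v)=h(gamma-1v)}, also $\dim_{\fq}(K)=k-1$. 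Thus $h(K)\mid \gcd(k,k-1)=1$, giving $H(K)=\fq$. For any $\alpha\in \fqk\setminus\fq$ the two hyperplanes $K$ and $\alpha^{-1}K$ are therefore distinct, so their intersection has dimension exactly $k-2$, proving the main claim.

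For the ``in particular'' part, if $k>3$ then $k-2\geq 2$, so by Theorem \ref{lem:charSidon} the subspace $V$ fails the Sidon condition $\dim_{\fq}(V\cap\alpha V)\leq 1$ already for every $\alpha\in\fqk\setminus\fq\subseteq \fqn\setminus\fq$, and hence $V$ is not a Sidon space.

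The main potential obstacle is the determination of $H(K)$; once the divisibility argument $h(K)\mid \gcd(k,k-1)=1$ is in place, everything else is a routine bookkeeping computation using linear independence over $\fqk$.
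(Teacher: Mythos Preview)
Your proof is correct and follows essentially the same route as the paper: parametrize, compare $\fqk$-coefficients of $1$ and $\gamma$, reduce $V\cap\alpha V$ to an intersection of two translates of $K=\ker(\mathrm{Tr}_{q^k/q})$, and finish by showing these two $(k-1)$-dimensional subspaces of $\fqk$ are distinct. The only noteworthy difference is that you justify $\alpha K\neq K$ explicitly via $H(K)=\fq$ (using $h(K)\mid\gcd(k,k-1)=1$, i.e.\ the content of Lemma~\ref{lem:hv=fq}), whereas the paper simply asserts this in one line; your version is a touch more self-contained.
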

\begin{proof}
Let $a\in V\cap\alpha V$ and let $\alpha\in\fqk\setminus\fq$ then 
\[
\alpha V=\lbrace \alpha u+\alpha \mathrm{Tr}_{q^k/q}(u)\gamma\colon u\in\fqk\rbrace_{\fq}.
\]
Since $a\in V\cap \alpha V$ then there exist $u,v\in\fqk$ such that 
\[
a=u+\mathrm{Tr}_{q^k/q}(u)\gamma=\alpha v +\gamma \alpha \mathrm{Tr}_{q^k/q}(v).
\]
Since $1,\gamma$ are $\fqk$-linearly independent, this implies that
\[
u=\alpha v 
\]
and
\[
\mathrm{Tr}_{q^k/q}(u)=\alpha \mathrm{Tr}_{q^k/q}(v).
\]
Let notice that $\mathrm{Tr}_{q^k/q}(u),\mathrm{Tr}_{q^k/q}(v)\in\fq$, and since $\alpha\notin\fq$, this implies that $\mathrm{Tr}_{q^k/q}(u)=\mathrm{Tr}_{q^k/q}(v)=0$. Therefore $u,v\in \ker(\mathrm{Tr}_{q^k/q})$. \\
Conversely, let consider $u\in \ker(\mathrm{Tr}_{q^k/q})\cap\alpha \ker(\mathrm{Tr}_{q^k/q})$. Then 
\[
u=\alpha v
\]
for some $v\in\ker(\mathrm{Tr}_{q^k/q})$. Then, since $\alpha\in\fqk$,  
\[
u+\gamma\mathrm{Tr}_{q^k/q}(u)=\alpha v+\mathrm{Tr}_{q^k/q}(\alpha v))=\alpha(v+\gamma\mathrm{Tr}_{q^k/q}(v))\]
and so $u=\alpha v \in V\cap \alpha V$.\\
This implies that 
\[
V\cap \alpha V= \ker(\mathrm{Tr}_{q^k/q})\cap \alpha \ker(\mathrm{Tr}_{q^k/q}).\]
Moreover, by Remark \ref{rmk:nucleotraccia}, $\dim_{\fq}(\ker(\mathrm{Tr}_{q^k/q}))=k-1$ and $\ker(\mathrm{Tr}_{q^k/q})\subseteq \fqk$, therefore
\[
k\geq \dim_{\fq}(\ker(\mathrm{Tr}_{q^k/q})+\alpha \ker(\mathrm{Tr}_{q^k/q})=2(k-1)-\dim_{\fq}(\ker(\mathrm{Tr}_{q^k/q})\cap \alpha \ker(\mathrm{Tr}_{q^k/q}))
\]
and so
\[
\dim_{\fq}(\ker(\mathrm{Tr}_{q^k/q})\cap \alpha \ker(\mathrm{Tr}_{q^k/q}))\geq k-2.
\]
Also, since $\alpha\notin\fq$, $\alpha\ker(\mathrm{Tr}_{q^k/q})\neq\ker(\mathrm{Tr}_{q^k/q})$ and hence 
\[
\dim(V\cap\alpha V)=\dim_{\fq}(\ker(\mathrm{Tr}_{q^k/q})\cap \alpha \ker(\mathrm{Tr}_{q^k/q}))= k-2.
\]
In particular, this means that there exists $\alpha\notin\fq$ such that $\dim_{\fq}(V\cap\alpha V)\geq k-2$, and if $k>3$, by Theorem \ref{lem:charSidon}, this implies that $V$ is not a Sidon space.

\end{proof}


In Theorem \ref{thm:monomialrspan} we have proved that $\dim_{\fq}(V_{x^{q^s},\gamma})=rk$ and by Theorem \ref{thm:Vfgammarsidon} we also know that $V_{x^{q^s},\gamma}$ is an $r$-Sidon space in $\F_{q^{kt}}$ for any $t\geq r+1$. Now, let consider $V_{Tr_{q^k/q},\gamma}$. By Theorem \ref{thm:traccianosidon} we have seen that if $k>3$ then $V_{\mathrm{Tr}_{q^k/q},\gamma}$ is not a Sidon space in $\fqn$, and in particular, by Lemma \ref{lem:boxprop}, $V_{\mathrm{Tr}_{q^k/q},\gamma}$ is not an $r$-Sidon space for any $r\geq 2$. However, in the following theorem we will show that, also for subspaces of the type $V_{\mathrm{Tr}_{q^k/q},\gamma}$, the $r$-span of $V_{\mathrm{Tr}_{q^k/q},\gamma}$ has dimension equal to $rk$, as happens for $V_{x^{q^s},\gamma}$.

\begin{theorem}
\label{thm:tracciadimensionerspan}
Let $V=V_{\mathrm{Tr}_{q^k/q},\gamma}=\lbrace u+\mathrm{Tr}_{q^k/q}(u)\gamma\colon u\in\fqk\rbrace\subseteq \fqk(\gamma)=\fqn$ and $\frac{n}{k}=t\geq r+1$. Then
\[
V^r=\fqk+\fqk U+\dots+\fqk U^{r-2}+WU^{r-1}+U^r
\]
where $W:=\ker(\mathrm{Tr}_{q^k/q})$ and $U:=\lbrace a+\gamma \mathrm{Tr}_{q^k/q}(a)\rangle_{\fq}$ where $a\in\fqk$ and $\mathrm{Tr}_{q^k/q}(a)\neq 0$. In particular,
\[
\dim_{\fq}(V^r)=rk.
\]
\end{theorem}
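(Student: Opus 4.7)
The plan is to decompose $V$ as an internal $\fq$-direct sum $V=W\oplus U$ and expand $V^r$ by bilinearity. Since $\mathrm{Tr}_{q^k/q}(a)\neq 0$, the generator $\beta:=a+\mathrm{Tr}_{q^k/q}(a)\gamma$ of $U$ has a nonzero $\gamma$-component, whereas $W\subseteq \fqk$; hence $W\cap U=\{0\}$, and comparing dimensions forces $V=W\oplus U$. Distributing $(w_1+u_1)\cdots(w_r+u_r)$ then yields
\[V^r=\sum_{i=0}^{r}W^{r-i}U^i,\]
with the convention $W^0=U^0=\fq$.

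The crux of the argument is to show $W^j=\fqk$ for every $j\ge 2$ (assuming $k\ge 3$, consistently with the setting of Lemma \ref{thm:traccianosidon}). For $j=2$, Theorem \ref{thm:analoguekneserthm} gives $\dim_{\fq}(W^2)\ge 2(k-1)-h(W^2)$, and since $W^2\subseteq\fqk$ the subfield $H(W^2)$ lies in $\fqk$, so $h(W^2)$ divides $k$. If $\dim_{\fq}(W^2)<k$, then $h(W^2)\ge k-1$; for $k\ge 3$ the only divisor of $k$ that is at least $k-1$ is $k$ itself, hence $H(W^2)=\fqk$, so $W^2$ is a nonzero $\fqk$-subspace of $\fqk$ and therefore equals $\fqk$, contradicting $\dim_{\fq}(W^2)<k$. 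For $j\ge 3$, an induction gives $W^{j}=W^{j-1}\cdot W=\fqk\cdot W=\fqk$, since $\fqk W$ is a nonzero $\fqk$-subspace of $\fqk$. Substituting into the expansion of $V^r$ produces the claimed identity
\[V^r=\fqk+\fqk U+\dots+\fqk U^{r-2}+WU^{r-1}+U^r.\]

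For the dimension, one works in the $\fqk$-basis $\{1,\gamma,\ldots,\gamma^{r}\}$ of $\fqk+\fqk\gamma+\dots+\fqk\gamma^{r}$, available since $t\ge r+1$. The expansion of $\beta^i$ in powers of $\gamma$ has leading term $\mathrm{Tr}_{q^k/q}(a)^i\gamma^i$ with nonzero coefficient, so by triangularity
\[\fqk+\fqk U+\dots+\fqk U^{r-2}=\fqk\oplus\fqk\gamma\oplus\dots\oplus\fqk\gamma^{r-2},\]
of $\fq$-dimension $(r-1)k$. A nonzero $w\beta^{r-1}\in WU^{r-1}$ has $\gamma^{r-1}$-coefficient $w\,\mathrm{Tr}_{q^k/q}(a)^{r-1}\neq 0$, so $WU^{r-1}$ meets the previous sum trivially and contributes $k-1$ further dimensions; finally $\beta^r$ has a nonzero $\gamma^r$-coefficient and $U^r$ adds one more independent dimension, for a total $\dim_{\fq}(V^r)=(r-1)k+(k-1)+1=rk$. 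The main technical step is the Kneser-type argument for $W^j=\fqk$; once that is in hand, the remainder is a routine triangularity dimension count in the $\gamma$-basis.
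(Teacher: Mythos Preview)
Your proof is correct and follows the same overall plan as the paper: write $V=W\oplus U$, expand $(W\oplus U)^r$ by bilinearity, reduce all $W^j$ with $j\ge 2$ to $\fqk$, and finish with a direct-sum/triangularity dimension count in the $\fqk$-basis $1,\gamma,\ldots,\gamma^r$. The one substantive difference is how you establish $W^2=\fqk$. The paper argues more elementarily: choose $\alpha,\beta\in W$ with $\alpha/\beta\notin\fq$ (possible since $\dim_{\fq}W=k-1\ge 2$), observe that $\alpha W=\beta W$ would make $W$ an $\fq(\alpha/\beta)$-space, contradicting $H(W)=\fq$ via $\gcd(k,k-1)=1$ and Lemma~\ref{lem:hv=fq}; hence $\alpha W\ne\beta W$ are two distinct $(k-1)$-dimensional subspaces of $\fqk$, so $\alpha W+\beta W=\fqk\subseteq W^2$. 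Your route through Theorem~\ref{thm:analoguekneserthm} and the divisibility of $h(W^2)$ in $k$ reaches the same conclusion but leans on the Kneser machinery, whereas the paper's version needs only a hyperplane-in-$\fqk$ count. Either way, the implicit hypothesis $k\ge 3$ is required for this step, as you correctly note.
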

\begin{proof}
Let $V=V_{\mathrm{Tr}_{q^k/q},\gamma}=\lbrace u+\mathrm{Tr}_{q^k/q}(u)\gamma\colon u\in\fqk\rbrace$.  By Remark \ref{rmk:nucleotraccia}, we know that $\dim_{\fq}(\ker(\mathrm{Tr}_{q^k/q}))=k-1$. Then let 
\[
W:=\ker(\mathrm{Tr}_{q^k/q})=\langle u_1,\dots,u_{k-1}\rangle_{\fq}\subseteq \fqk.
\]
Therefore
\[
W=\langle u_1,\dots,u_{k-1}\rangle_{\fq}\subseteq V.
\]
Moreover, there exists $a\in\fqk$ such that $\mathrm{Tr}_{q^k/q}(a)\neq 0$. Thus denote by
\[
U:=\langle a+\gamma \mathrm{Tr}_{q^k/q}(a)\rangle_{\fq},
\]
since $\dim_{\fq}(V)=k$, then
\[
V=\langle u_1,\dots,u_{k-1}\rangle_{\fq}\oplus\langle a+\gamma \mathrm{Tr}_{q^k/q}(a)\rangle_{\fq}=W\oplus U.
\]
Now, note that
\[
W^s=\fqk
\]
for any $s\geq 2$.
Indeed, let consider $\alpha,\beta \in W$ such that $\frac{\alpha}{\beta}\notin\fq$ and assume that $\alpha W=\beta W$. Then $W$ is an $\fq(\alpha/\beta)$-subspace of $\fqk$ and since $\gcd(k,k-1)=1$ by Lemma \ref{lem:hv=fq} we have a contradiction. Therefore it follows that $\dim_{\fq}(\alpha W+\beta W)=k$. Moreover, $\alpha W+\beta W\subseteq W^2\subseteq \fqk$, so we have that $W^2=\fqk$ and 
\[
W^s=\fqk 
\]
for any $s\geq 2$.    
Then we have
\begin{equation}
\begin{aligned}
V^r&=(W\oplus U)^r\\
&=\langle u_{i_1}\cdots u_{i_{r}}\colon i_j\in[k-1],j\in[r]\rangle_{\fq}+\\
&+\langle (u_{i_1}\cdots u_{i_{r-1}})(a+\gamma \mathrm{Tr}_{q^k/q}(a))\colon i_j\in[k-1],j\in[r-1]\rangle_{\fq}+ \\
& \vdots\\
&+ \langle (u_{i_1}u_{i_{2}})(a+\gamma \mathrm{Tr}_{q^k/q}(a))^{r-2}\colon i_j\in[k-1],j\in[2]\rangle_{\fq}+ \\
&+\langle u_{i_1}(a+\gamma \mathrm{Tr}_{q^k/q}(a))^r\colon i_1\in[k-1]\rangle_{\fq}+ \\
&+ \langle (a+\mathrm{Tr}_{q^k/q}\gamma)^r\rangle_{\fq}\\
&=W^r+ W^{r-1}U+\dots+ W^2U^{r-2}+ WU^{r-1}+ U^r\\
&=\fqk+\fqk U+\dots+\fqk U^{r-2}+WU^{r-1}+U^r
\end{aligned}
\end{equation}
and this is a direct sum since $1,\gamma, \dots, \gamma^{r} $ are $\fqk$-linearly independent.\\
It is clear that $\dim_{\fq}(U^j)=1$ for any $j\in[r]$ and so $\dim_{\fq}(\fqk U^j)=k$ for any $1\leq j\leq r-2$ and $\dim_{\fq}(WU^{r-1})=\dim_{\fq}(W)=k-1$.\\
Thus we get that
\begin{equation}
    \begin{aligned}
        \dim_{\fq}(V^{r})&=\dim_{\fq}(\fqk)+\dim_{\fq}(\fqk U)+\dots+\dim_{\fq}(\fqk U^{r-2})+\\
        &+\dim_{\fq}(WU^{r-1})+\dim_{\fq}(U^{r})= \\
        &=\underbrace{k+k+\dots+k}_{r-1}+(k-1)+1=rk.
    \end{aligned}
\end{equation}
\end{proof}

\begin{remark}
This polynomial approach seems very powerful to get constructions and examples. We may wonder whether multivariate linearized polynomials may give further constructions as done for rank-metric codes, see e.g. in \cite{divisible}.
\end{remark}

\section{Some computational results on $r$-Sidon spaces}\label{sec:computat}

By Lemma \ref{lem:boxprop}, we know that every $r$-Sidon space is also an $r'$-Sidon space for every $r'\leq r$. So, it is quite natural to ask whether or not a Sidon space is also an $r$-Sidon space for $r \geq 2$.
In \cite[Construction 43]{Roth}, Roth, Raviv and Tamo provided examples of $r$-Sidon spaces of dimension $k$ in $\F_{q^{k(r+1)}}$ for any prime power $q$, $k>0$ and $r\geq 2$ by considering a generating element $\gamma$ of $\fqn$ over $\fqk$. Whereas, in \cite[Construction 45]{Roth} they provided examples of $r$-Sidon spaces of dimension $k$ in $\mathbb{F}_{q^{kr}}$, for any prime power $q\geq 3$, for any integers $k>0$ and $r\geq 2$ by considering a generating element $\gamma$ of $\fqn$ over $\fqk$ such that $\mathrm{N}_{q^k/q}(\gamma)\neq 1$. \\
In particular, let notice that, by following their arguments and by Lemma \ref{lem:boxprop}, it can be easily shown the following proposition.

\begin{proposition}
\label{prop:generalizedrothraviv}
For any prime power $q$ and integers $k>0$ and $r\geq 2$, let $n=kt$ with $t\geq r+1$ and $\gcd(k,s)=1$. Let $\gamma\in\fqn$ such that $\fqk (\gamma)=\fqn$, then $V=\lbrace u+u^{q^s}\gamma\colon u\fqk\rbrace$ is an $r'$-Sidon space of $\fqn$ of dimension $k$, for any $r'\leq r$.\\
Moreover, for any prime power $q\geq 3$ and integers $k>0$ and $r\geq 2$, let $n=kt$ with $t\geq r$ and $\gcd(k,s)=1$. Let $\gamma\in\fqn$ such that $\fqk(\gamma)=\fqn$ and $\mathrm{N}_{q^k/q}(\gamma)\neq 1$. Then $V=\lbrace u+u^{q^s}\gamma\colon u\fqk\rbrace$ is an $r'$-Sidon space of $\fqn$ of dimension $k$, for any $r'\leq r$.\\
\end{proposition}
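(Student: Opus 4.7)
The plan is to derive both parts of Proposition \ref{prop:generalizedrothraviv} by combining Theorem \ref{thm:Vfgammarsidon}, a direct adaptation of Construction 45 in \cite{Roth}, and Lemma \ref{lem:boxprop}. Throughout, the decisive input is that the monomial $x^{q^s}$ with $\gcd(s,k)=1$ is a scattered $q$-polynomial in $\mathcal{L}_{k,q}$ (entry i) of Table \ref{scattpoly}).

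For the first part, I would simply apply Theorem \ref{thm:Vfgammarsidon} with $f(x)=x^{q^s}$: the hypotheses $n=kt$ with $t\geq r+1$ and $\fqk(\gamma)=\fqn$ match those of that theorem verbatim, so $V=V_{x^{q^s},\gamma}$ is an $r$-Sidon space of dimension $k$, and Lemma \ref{lem:boxprop} then gives the $r'$-Sidon property for every $r'\leq r$.

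For the second part, the range $t\geq r+1$ is already covered by the first part, so only the case $t=r$ requires genuine work. Imitating Construction 45 of \cite{Roth} with $x^q$ replaced by $x^{q^s}$, I would start from an equality $\prod_{i=1}^r(u_i+u_i^{q^s}\gamma)=\prod_{i=1}^r(v_i+v_i^{q^s}\gamma)$ with $u_i,v_i\in\fqk^*$, view each side as the value at $\gamma$ of a degree-$r$ polynomial $P_u(x),P_v(x)\in\fqk[x]$, and note that $P_u-P_v$ must be an $\fqk$-scalar multiple of the minimal polynomial $p(x)=x^r+a_{r-1}x^{r-1}+\cdots+a_0$ of $\gamma$ over $\fqk$. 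Comparing constant and leading coefficients produces $A-B=a_0(A-B)^{q^s}$, with $A:=\prod_i u_i$ and $B:=\prod_i v_i$. Assuming $A\neq B$, I would rewrite this as $(A-B)^{q^s-1}=1/a_0$ and take the $\fqk/\fq$-norm: since $q-1$ divides $q^s-1$, the norm of $(A-B)^{q^s-1}$ is $1$, forcing $\mathrm{N}_{q^k/q}(a_0)=1$, which via $a_0=(-1)^r\mathrm{N}_{q^n/q^k}(\gamma)$ and the tower law contradicts the hypothesis on $\gamma$. Hence $A=B$ and $P_u=P_v$ in $\fqk[x]$; unique factorization in $\fqk[x]$ then matches their linear factors up to scalars $c_i\in\fqk^*$, and the constraint $c_i^{q^s-1}=1$ together with $\gcd(s,k)=1$ forces $c_i\in\fq^*$, giving the multiset equality $\{\{u_i\fq\}\}=\{\{v_j\fq\}\}$. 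A final application of Lemma \ref{lem:boxprop} promotes the $r$-Sidon conclusion to $r'$-Sidon for every $r'\leq r$.

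The main obstacle is the careful minimal-polynomial reduction in the second part, together with translating the stated hypothesis on $\mathrm{N}_{q^k/q}(\gamma)$ (which, since $\gamma\notin\fqk$, must be read through the constant term $a_0$) into the equivalent inequality $\mathrm{N}_{q^k/q}(a_0)\neq 1$ that actually drives the contradiction; once that translation is in place, everything else reduces to bookkeeping on symmetric polynomials in the $u_i$ and $v_j$.
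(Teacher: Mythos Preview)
Your proposal is correct and follows exactly the route the paper intends: the paper offers no detailed argument for Proposition \ref{prop:generalizedrothraviv}, merely asserting that it ``can be easily shown'' by ``following their arguments and by Lemma \ref{lem:boxprop}'', and you have faithfully supplied those details---invoking Theorem \ref{thm:Vfgammarsidon} with $f(x)=x^{q^s}$ for the first part, and adapting the minimal-polynomial comparison of \cite[Construction 45]{Roth} for the critical case $t=r$ in the second part. Your observation that the stated norm hypothesis must be read as $\mathrm{N}_{q^k/q}(a_0)\neq 1$ (equivalently $\mathrm{N}_{q^n/q}(\gamma)\neq(-1)^n$, as in Theorem \ref{thm:monomialrspan}) is the right way to resolve the notational ambiguity in the statement.
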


For instance, when $k=3$ and $r=2$, \cite[Construction 45]{Roth} reads as follows
\[ V=\{ u+u^q\gamma \colon u \in \F_{q^3} \}\subseteq \F_{q^6}, \]
for some $\gamma \in \F_{q^6}\setminus\F_{q^3}$.
MAGMA computations show the following.

\begin{proposition}
For any $\gamma \in \F_{2^6}\setminus \F_{2^3}$ the subspace 
\[ V=\{ u+u^2\gamma \colon u \in \F_{2^3} \}\subseteq \F_{2^6} \]
is a $2$-Sidon space but it is not a $3$-Sidon space.
\end{proposition}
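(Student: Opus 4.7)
The plan is to dispatch both claims by a finite verification, as the paper's reference to MAGMA suggests. For the first claim, by Theorem \ref{lem:charSidon} it suffices to verify that $\dim_{\F_2}(V\cap \alpha V)\le 1$ for every $\alpha\in \F_{2^6}\setminus\F_2$, a check over the $62$ nontrivial scalars. Equivalently, one can argue directly: write $\F_{2^6}=\F_{2^3}(\gamma)$ with $\gamma^2=c_0+c_1\gamma$ and expand $a_1a_2=b_1b_2$, where $a_i=u_i+u_i^2\gamma$ and $b_i=v_i+v_i^2\gamma$, in the $\F_{2^3}$-basis $\{1,\gamma\}$. This yields two $\F_{2^3}$-polynomial equations in $u_1,u_2,v_1,v_2$ which, after using characteristic-$2$ Frobenius identities, depend only on the symmetric functions $u_1u_2$ and $u_1+u_2$ (and similarly for $v_1,v_2$). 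Solving them should force the multisets $\{\{u_1,u_2\}\}$ and $\{\{v_1,v_2\}\}$ to coincide, and since $\F_2^*=\{1\}$ this is already the required Sidon conclusion.

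For the second claim, the strategy is to exhibit a concrete triple collision. With $|V\setminus\{0\}|=7$ there are $\binom{9}{3}=84$ unordered triples of nonzero elements of $V$, while $|\F_{2^6}^*|=63$, so collisions $a_1a_2a_3=b_1b_2b_3$ abound by pigeonhole and the real task is to produce one for which the two triples differ as multisets. A direct MAGMA search over $(u_1,u_2,u_3)$ and $(v_1,v_2,v_3)$ in $(\F_{2^3}^*)^3$, enumerating all products $\prod_{i=1}^{3}(u_i+u_i^2\gamma)$, locates such a collision. Alternatively, one can expand $\prod_i(u_i+u_i^2\gamma)=\prod_i(v_i+v_i^2\gamma)$ using $\gamma^2=c_0+c_1\gamma$ and $\gamma^3=c_0c_1+(c_0+c_1^2)\gamma$ to obtain a pair of polynomial equations over $\F_{2^3}$, and read off a parametric family of solutions.

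The main obstacle is the universal quantifier over $\gamma$. By Proposition \ref{rsidoninvar} the $(r$-$)$Sidon property is preserved under semilinear equivalence, and Theorem \ref{thm:equivalenzasidon} together with the elementary identity $\lambda V_\gamma = V_{\gamma/\lambda}$ for $\lambda\in\F_{2^3}^*$ and the Frobenius action $V_\gamma^\sigma=V_{\gamma^\sigma}$ should reduce the problem to a short list of representative $\gamma$'s, one per orbit of the natural group action on $\F_{2^6}\setminus\F_{2^3}$. The delicate step will be to enumerate these orbits explicitly, so that the finite MAGMA verification on the representatives genuinely covers every $\gamma$ in the statement.
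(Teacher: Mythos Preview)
Your plan matches the paper's approach exactly: the paper offers no argument beyond the sentence ``MAGMA computations show the following,'' so a finite machine check is all that is expected.

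That said, you are underselling your own pigeonhole remark. Since $q=2$, the map $a\mapsto a\F_2$ is injective on $V\setminus\{0\}$, so the $3$-Sidon condition is literally that the product map from size-$3$ multisets of $V\setminus\{0\}$ to $\F_{2^6}^*$ is injective. You correctly count $\binom{9}{3}=84$ such multisets and only $63$ possible products; by pigeonhole two \emph{distinct multisets} already collide, and that is precisely a violation of the $3$-Sidon property. There is no ``real task'' left, no MAGMA search needed, and the argument is uniform in $\gamma$ --- so your worry about the universal quantifier evaporates for this half of the statement. This is in fact cleaner than what the paper does.

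For the $2$-Sidon half, your orbit-reduction idea is correct in spirit but overkill: there are only $56$ elements $\gamma\in\F_{2^6}\setminus\F_{2^3}$, so a brute-force MAGMA loop over all of them is entirely feasible and removes the need to enumerate orbits carefully. Your sketched direct argument (expanding $a_1a_2=b_1b_2$ in the basis $\{1,\gamma\}$) sets up the right equations, but the claim that ``solving them should force the multisets to coincide'' hides a nontrivial case: writing $p=u_1u_2$, $p'=v_1v_2$, the first equation gives $(p+p')(1+(p+p')c_0)=0$, and the branch $p+p'=c_0^{-1}$ does not obviously lead to a contradiction without further work. If you want a computation-free proof here you will need to close that case; otherwise just let MAGMA handle all $56$ choices of $\gamma$.
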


Proposition \ref{prop:generalizedrothraviv} shows that when $k=3$ and $r=3$,
\[ V=\{ u+u^q\gamma \colon u \in \F_{q^3} \}\subseteq \F_{q^9} \]
is a $3$-Sidon space, for some $\gamma \in \F_{q^9}\setminus\F_{q^3}$. The next example shows that not all of the three dimensional Sidon spaces are also $3$-Sidon spaces.

\begin{proposition}\label{prop:trace}
For $q \in\{2,3\}$ there does not exist no $\gamma \in \F_{q^9}\setminus \F_{q^3}$ such that the subspace 
\[ V=\{ u+\mathrm{Tr}_{q^3/q}(u)\gamma \colon u \in \F_{q^3} \}\subseteq \F_{q^9} \]
is a $3$-Sidon space, and for any $\gamma\in\mathbb{F}_{q^9}\setminus \mathbb{F}_{q^3}$ $V$ is a Sidon space. 
\end{proposition}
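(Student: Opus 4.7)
My plan is to handle the two assertions separately. First observe $\dim_{\fq}(V) = 3$: the map $u \mapsto u + \mathrm{Tr}_{q^3/q}(u)\gamma$ is $\fq$-linear with trivial kernel, since $\gamma \notin \F_{q^3}$ implies that $\{1, \gamma\}$ is $\F_{q^3}$-linearly independent, forcing the two components to vanish separately.

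For the ``not $3$-Sidon'' assertion I would use a pigeonhole argument. Set $W = \ker(\mathrm{Tr}_{q^3/q})$; by Remark \ref{rmk:nucleotraccia} we have $W \subseteq V$ with $\dim_{\fq}(W) = 2$ and $W \setminus\{0\} \subseteq \F_{q^3}^*$. The number of multisets of size $3$ of $\fq$-classes in $W$ is $\binom{(q^2-1)/(q-1)+2}{3} = \binom{q+3}{3}$, while any product of three nonzero elements of $W$ lies in $\F_{q^3}^*$, whose $\fq$-classes number $(q^3-1)/(q-1) = q^2+q+1$. For $q=2$ this gives $10 > 7$ and for $q=3$ it gives $20 > 13$, so by pigeonhole two distinct multisets $\{\{a_1\fq, a_2\fq, a_3\fq\}\}$ and $\{\{b_1\fq, b_2\fq, b_3\fq\}\}$ of classes in $W$ satisfy $a_1 a_2 a_3 = \lambda b_1 b_2 b_3$ for some $\lambda \in \fq^*$. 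Absorbing $\lambda$ into $b_1$ (which preserves the class $b_1 \fq$) yields equal products with still-distinct multisets, witnessing the failure of the $3$-Sidon condition on triples of $V$.

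For the ``always Sidon'' assertion, by Theorem \ref{lem:charSidon} it suffices to show $\dim_{\fq}(V \cap \alpha V) \le 1$ for every $\alpha \in \fqn \setminus \fq$ (with $n=9$). When $\alpha \in \F_{q^3}\setminus \fq$, Lemma \ref{thm:traccianosidon} applied with $k=3$ gives $\dim_{\fq}(V \cap \alpha V) = k-2 = 1$. The remaining case $\alpha \in \fqn \setminus \F_{q^3}$ is the essential one: I would verify it in MAGMA, looping $\gamma$ over a transversal of $\fqn \setminus \F_{q^3}$ modulo the equivalence of Theorem \ref{thm:equivalenzasidon} (which preserves the Sidon property by Proposition \ref{rsidoninvar}), and for each such $\gamma$ looping $\alpha$ over representatives of $(\fqn^* \setminus \F_{q^3}^*)/\fq^*$. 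The main obstacle is precisely this step: no structural argument is apparent for $\alpha \notin \F_{q^3}$, as the $\F_{q^3}$-module reasoning of Lemma \ref{thm:traccianosidon} breaks down, and the proof relies on the cases being small enough for exhaustive computation. In passing, the pigeonhole bound $\binom{q+3}{3} > q^2+q+1$ holds for every $q \ge 2$, so the restriction to $q \in \{2,3\}$ in the statement stems entirely from the scope of the Sidon verification.
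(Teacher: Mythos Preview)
Your proof is correct, but the division of labour between structure and computation is essentially the mirror image of the paper's. For the failure of the $3$-Sidon property, the paper simply invokes a MAGMA check, whereas your pigeonhole count on $W=\ker(\mathrm{Tr}_{q^3/q})$ is a clean structural argument and, as you note, works for every prime power $q$. For the Sidon property, by contrast, the paper avoids computation altogether: via Theorem~\ref{thm:equivalenzasidon} (with $\sigma=\mathrm{id}$ and $A=\left(\begin{smallmatrix}1&1\\0&1\end{smallmatrix}\right)$) the graph of $\mathrm{Tr}_{q^3/q}(x)=x+x^q+x^{q^2}$ is $\mathrm{GL}(2,\F_{q^3})$-equivalent to the graph of the binomial $x^q+x^{q^2}$, and \cite[Proposition~4.8]{CPSZSidon} then yields the Sidon property for every admissible $\gamma$ and every $q$. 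Thus your closing remark that the restriction $q\in\{2,3\}$ stems from the Sidon verification is true of your proof but not of the paper's, where the restriction comes from the MAGMA step on the $3$-Sidon side; combining your pigeonhole with the paper's binomial reduction would in fact remove the restriction on $q$ entirely.
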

\begin{proof}
The first part can be checked with MAGMA. For the second part, note that $V$ is a Sidon space, since up to equivalence (see Theorem \ref{thm:equivalenzasidon}) it can be defined by a binomial and \cite[Proposition 4.8]{CPSZSidon} guarantees the Sidon space property.
\end{proof}

\medskip

Moreover, MAGMA computations also show that, by a random check, between 14,5\% and 16,6\% of three dimensional subspaces in $\F_{2^9}$ are $3$-Sidon spaces and between 93,6\% and 95,1\% of three dimensional subspaces in $\F_{2^9}$ are $2$-Sidon spaces, showing that the $3$-Sidon space property is much stronger than the $2$-Sidon space property.\\

In Section \ref{sec:constr} we have seen that, since $x^{q^s}$ is scattered whenever $\gcd(k,s)=1$, by Theorem \ref{thm:Vfgammarsidon} we have that $V_{x^{q^s},\gamma}$ is an $r$-Sidon space in $\F_{q^{kt}}$ for any $t\geq r+1$ and by Theorem \ref{thm:monomialrspan} we also know that $\dim_{\fq}(V_{x^{q^s},\gamma}^r)=rk$. Moreover, we have also considered $V_{\mathrm{Tr}_{q^k/q},\gamma}$, showing that, except when $k\leq 3$, this is not a Sidon space. However, in Theorem \ref{thm:tracciadimensionerspan}, we have observed that $V_{\mathrm{Tr}_{q^k/q},\gamma}$ give us other examples of subspaces such that $\dim_{\fq}(V_{\mathrm{Tr}_{q^k/q},\gamma}^r)=rk$.\\
Now, we want to analyze the dimension of the $r$-span of $V_{f,\gamma}$ in the case in which $f_1(x)=x^{q^s}+\delta x^{q^{2s}}$ and $f_2(x)=x^{q^s}+\delta x^{q^{s(k-1)}}$.\\
Let start with $f_1(x)=x^{q^s}+\delta x^{q^{2s}}$.
We recall that $f_1(x)=x^{q^s}+\delta x^{q^{2s}}$ is scattered for $k=3,4$, whereas for $s=1, k\geq 5$ and $\delta\neq 0$ it is not scattered (see \cite[Remark 3.11]{zanella2019condition} and also \cite{montanucci2022class}). 
Moreover, if $n/k>2$ for any $\gamma\in\fqn$, $f_1(x)=x^{q^s}+\delta x^{q^{2s}}$ defines a Sidon space $V_{f_1,\gamma}\in\fqn$ (see Corollary 3.13 and Corollary 4.11 in \cite{CPSZSidon}). Let consider
\[
V_{f_1,\gamma}=\lbrace u+(u^{q^s}+\delta u^{q^{2s}})\gamma\colon u\in\fqk\rbrace.
\]
By MAGMA computations, we have computed $\dim_{\fq}(V_{f_1,\gamma}^r)$ by taking any $\delta\in\fqk,\delta\neq 0$, a random $\gamma$ primitive element of $\fqn$, and for the values of $(n,q,k,r,s)$ resumed in Table \ref{comptest1}. 
\begin{table}[htp]
\centering
\tabcolsep=3.0 mm
\begin{tabular}{|c|c|c|c|c|c|}
\hline
$r$ & $n$ & $q$ & $k$ &  $s$ & $\dim_{\fq}(V_{f_1,\gamma}^r)$ \\ \hline
3 & 25 & 2 & 5 &  1 & 20 \\ \hline 
3 & 30 & 2 & 6 &  1 & 24 \\ \hline 
3 & 35 & 2 & 7 &  1 & 28 \\ \hline 
3 & 40 & 2 & 8 &  1 & 32 \\ \hline 
4 & 24 & 2 & 4 &  1 & 20 \\ \hline 
4 & 35 & 2 & 5 &  1 & 25 \\ \hline 
4 & 36 & 2 & 6 &  1 & 30 \\ \hline 
4 & 42 & 2 & 7 &  1 & 35 \\ \hline 
4 & 48 & 2 & 8 &  1 & 40 \\ \hline 
5 & 28 & 2 & 4 &  1 & 24 \\ \hline 
5 & 35 & 2 & 5 &  1 & 30 \\ \hline
5 & 42 & 2 & 6 &  1 & 36 \\ \hline 
5 & 49 & 2 & 7 &  1 & 42 \\ \hline 
\end{tabular}
\caption{Dimensions of the $r$-span of $V_{f_1,\gamma}^r$ with $f_1(x)=x^{q^s}+\delta x^{q^{2s}}$}
\label{comptest1}
\end{table}
\\
In all the considered cases of Table \ref{comptest1} it results that $\dim_{\fq}(V_{f_1,\gamma}^r)=(r+1)k$. Hence, this seems to suggest that, once fixed $n,q,k,r,s$,  $\dim_{\fq}(V_{f_1,\gamma}^r)$ is $k$ times greater than $\dim_{\fq}(V_{x^{q^s},\gamma}^r)$.\\
\begin{table}[htp]
\centering
\tabcolsep=3.0 mm
\begin{tabular}{|c|c|c|c|c|c|}
\hline
$r$ & $n$ & $q$ & $k$ &  $s$ & $\dim_{\fq}(V_{f_1,\gamma}^r)$ \\ \hline
3 & 36 & 3 & 4 & 1 & 16 \\ \hline 
3 & 25 & 3 & 5 & 1 & 20 \\ \hline 
3 & 30 & 3 & 6 & 1 & 24 \\ \hline 
3 & 35 & 3 & 7 & 1 & 28 \\ \hline 
3 & 40 & 3 & 8 & 1 & 32 \\ \hline 
4 & 24 & 3 & 4 & 1 & 20 \\ \hline 
4 & 30 & 3 & 5 & 1 & 25 \\ \hline 
4 & 36 & 3 & 6 & 1 & 30 \\ \hline 
4 & 42 & 3 & 7 & 1 & 35 \\ \hline 
4 & 48 & 3 & 8 & 1 & 40 \\ \hline 
5 & 28 & 3 & 4 & 1 & 24 \\ \hline 
5 & 35 & 3 & 5 & 1 & 30 \\ \hline 
5 & 42 & 3 & 6 & 1 & 36 \\ \hline 
\end{tabular}
\caption{Dimensions of the $r$-span of $V_{f_2,\gamma}^r$ with $f_2(x)=x^{q^s}+\delta x^{q^{s(k-1)}}$}
\label{comptest2}
\end{table}
Now, let consider $f_2(x)=x^{q^s}+\delta x^{q^{s(k-1)}}$ and recall that this is scattered if and only if $\gcd(s,k)=1$ and $\mathrm{N}_{q^k/q}(\delta)\neq 1$ (see e.g. \cite{zanella2019condition}). Also, if $n/k>2$, for any $\gamma\in\fqn$, $f_2(x)=x^{q^s}+\delta x^{q^{s(k-1)}}$ defines a Sidon space $V_{f_2,\gamma}$ in $\fqn$, if and only if $\mathrm{N}_{q^k/q}(\delta)\neq 1$ or $k$ is odd (see Corollary 3.13 and Corollary 4.14 in \cite{CPSZSidon}).\\
Let consider
\[
V_{f_2,\gamma}=\lbrace u+(u^{q^s}+\delta u^{q^{s(k-1)}})\gamma\colon u\in\fqk\rbrace.
\]
By MAGMA computations, we have evaluated $\dim_{\fq}(V_{f_2,\gamma}^r)$ by taking any $\delta\in\fqk, \delta \neq 0$ and a random primitive element $\gamma$ of $\fqn$, and for the values of $(n,q,k,r,s)$ resumed in Table \ref{comptest2}. 
Note that when $k$ is even, to ensure the Sidon property on the subspace considered, we considered only those $\delta$ such that $\N_{q^k/q}(\delta)\ne 1$. As happens for $f_1(x)$, also for $f_2(x)$ it results that $\dim_{\fq}(V_{f_2,\gamma}^r)=(r+1)k$, for all the considered cases resumed in Table \ref{comptest2}.
\\
\\

\section{$B_r$-sets from $r$-Sidon spaces}\label{sec:Brsets}

As already mentioned in the Introduction, $B_r$-sets are of particular interest in number theory and have been studied since their introduction, see e.g. \cite{GenSS,newSidonsets,Sidonsurvey}.

In \cite{Roth}, the authors gave a way of constructing $B_r$-sets from $r$-Sidon spaces in $\fqn$ by looking at the elements of an $r$-Sidon space as the power of the same primitive element of $\fqn$.

We recall their result that generalizes the constructions given by Singer in \cite{singer} (see also \cite{Sidonsurvey}).

\begin{theorem}\cite[Theorem 48]{Roth}
\label{thm:Bose}
For an $r$-Sidon space $V \in \mathcal{G}(n, k)$, and denote by 
$\Tilde{V}$ the set of nonzero representatives of all one-dimensional subspaces of $V$. Let $\gamma \in \fqn$ be a primitive element of $\fqn$, then the set 
\[S= \{n_i : \gamma^{_i}\in\Tilde{V} \}\] 
is a $B_r$-set of size $\frac{q^k-1}{q-1}$ in $\mathbb{Z}_{(q^n-1)/(q-1)}$.
\end{theorem}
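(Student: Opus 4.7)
The plan is to translate the additive equation among exponents defining the $B_r$ property into a multiplicative equation of products of $r$ elements of $V$, and then invoke the $r$-Sidon property of $V$ to recover the required equality of multisets.

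First, I would record the key correspondence. Since $\gamma$ is primitive, the map $i \mapsto \gamma^i$ is a bijection between $\{0,1,\dots,q^n-2\}$ and $\fqn^*$. Because $\fq^* = \langle \gamma^{(q^n-1)/(q-1)} \rangle$, two powers $\gamma^i$ and $\gamma^j$ lie in the same $\fq$-coset (equivalently, span the same one-dimensional $\fq$-subspace) if and only if $i \equiv j \pmod{(q^n-1)/(q-1)}$. Thus the assignment $\gamma^{n_i} \mapsto n_i \pmod{(q^n-1)/(q-1)}$ gives a bijection between one-dimensional $\fq$-subspaces of $V$ and the elements of $S$, so $|S| = (q^k-1)/(q-1)$ (the number of points of $\mathrm{PG}(V)$).

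Next, I would verify the $B_r$ property. Suppose
\[ n_{i_1} + \cdots + n_{i_r} \equiv n_{j_1} + \cdots + n_{j_r} \pmod{(q^n-1)/(q-1)}. \]
Lifting this back to $\fqn^*$, there exists $\lambda \in \fq^*$ with
\[ \gamma^{n_{i_1}} \cdots \gamma^{n_{i_r}} = \lambda\, \gamma^{n_{j_1}} \cdots \gamma^{n_{j_r}}. \]
Setting $a_k = \gamma^{n_{i_k}}$ and $b_k = \gamma^{n_{j_k}}$, all in $V\setminus\{0\}$, we get $\prod_{k=1}^r a_k = \prod_{k=1}^r (b_k')$ where $b_1' = \lambda b_1$ and $b_k' = b_k$ for $k\ge 2$. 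Since $V$ is $\fq$-linear, each $b_k' \in V \setminus \{0\}$, and $b_1'\fq = b_1\fq$, so the multiset of cosets is unchanged: $\{\{b_k'\fq\}\} = \{\{b_k\fq\}\}$.

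Now I would apply the $r$-Sidon property of $V$ to $\prod a_k = \prod b_k'$: it gives
\[ \{\{ a_1 \fq, \ldots, a_r \fq \}\} = \{\{ b_1' \fq, \ldots, b_r' \fq \}\} = \{\{ b_1 \fq, \ldots, b_r \fq \}\}. \]
Translating back via the bijection above, this is exactly the equality of multisets $\{\{n_{i_1}, \ldots, n_{i_r}\}\} = \{\{n_{j_1}, \ldots, n_{j_r}\}\}$ in $\mathbb{Z}_{(q^n-1)/(q-1)}$, which is the defining $B_r$ condition.

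The only mildly delicate point is the presence of the scalar $\lambda \in \fq^*$, which arises because congruence of exponents modulo $(q^n-1)/(q-1)$ is weaker than equality in $\fqn^*$; but this is precisely the situation the $r$-Sidon definition is designed for, and $\fq$-linearity of $V$ lets us absorb $\lambda$ harmlessly into one of the factors. All other steps are straightforward bookkeeping about primitive elements and cosets.
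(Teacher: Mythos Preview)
The paper does not supply its own proof of this statement; it merely quotes it from \cite[Theorem~48]{Roth}. Your argument is correct and is exactly the standard one: pass from the additive congruence of exponents modulo $(q^n-1)/(q-1)$ to a multiplicative relation in $\fqn^*$ up to an $\fq^*$-scalar, absorb that scalar into one factor using the $\fq$-linearity of $V$, apply the $r$-Sidon property, and translate the resulting equality of multisets of $\fq$-cosets back to exponents via the bijection $\gamma^i\fq \leftrightarrow i \bmod (q^n-1)/(q-1)$. The size count $|S|=(q^k-1)/(q-1)$ is likewise handled correctly.
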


Therefore, by applying the above results to our constructions of Section \ref{sec:constr}, we obtain examples of $B_r$-sets.
\begin{corollary}
Let $n=kt$ for some $k,t \in \mathbb{N}$ and let $V=\lbrace u+f(u)\gamma\colon u\in\fqk\rbrace$ where $f(x)\in\mathcal{L}_{k,q}$ is scattered and $\gamma\in\fqn$ is a primitive element of $\fqn$. Let consider 
the set $\Tilde(V)$ of nonzero representatives of all one-dimensional subspaces of $V$. Then the set
\[
S=\lbrace n_i\colon \gamma^i\in\Tilde{V}\rbrace
\]
is $B_r$-sets of size $(q^{n/t}-1)/(q-1)$, for any $r \leq t-1$, in $\mathbb{Z}_{(q^n-1)/(q-1)}$.
\end{corollary}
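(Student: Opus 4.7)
The plan is essentially to chain together two earlier results: Theorem \ref{thm:Vfgammarsidon} tells us that $V=\{u+f(u)\gamma:u\in\fqk\}$ is an $r$-Sidon space of dimension $k$ whenever $f$ is scattered and $\gamma$ is a root of an irreducible polynomial of degree $t\geq r+1$ over $\fqk$; Theorem \ref{thm:Bose} then converts any $r$-Sidon space of dimension $k$ in $\fqn$ into a $B_r$-set of size $(q^k-1)/(q-1)$ inside $\mathbb{Z}_{(q^n-1)/(q-1)}$. So the entire content of the corollary is verifying that the hypotheses of Theorem \ref{thm:Vfgammarsidon} are met under the stated assumptions, for every admissible $r$.

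First I would observe that primitivity of $\gamma$ in $\fqn$ implies $\fq(\gamma)=\fqn$, and therefore $\fqk(\gamma)=\fqn$, so that the minimal polynomial of $\gamma$ over $\fqk$ has degree $[\fqn:\fqk]=n/k=t$. Hence $\gamma$ is indeed a root of an irreducible polynomial of degree $t$ over $\fqk$, which is precisely the hypothesis of Theorem \ref{thm:Vfgammarsidon}. Applying that theorem with $r=t-1$ yields that $V$ is a $(t-1)$-Sidon space of dimension $k$ in $\fqn$. By Lemma \ref{lem:boxprop}, $V$ is then also an $r$-Sidon space for every $r$ with $2\leq r\leq t-1$.

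Next I would apply Theorem \ref{thm:Bose} to $V$ for each such $r$: since $\gamma$ is a primitive element of $\fqn$, the set
\[
S=\{n_i : \gamma^{n_i}\in\widetilde{V}\},
\]
where $\widetilde{V}$ denotes a set of nonzero representatives of the one-dimensional $\fq$-subspaces of $V$, is a $B_r$-set in $\mathbb{Z}_{(q^n-1)/(q-1)}$ of cardinality $|\widetilde{V}|=(q^k-1)/(q-1)$. Writing $k=n/t$ gives the size $(q^{n/t}-1)/(q-1)$ claimed in the statement.

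There is really no obstacle here beyond bookkeeping: the only subtle point is the translation between ``primitive element of $\fqn$'' (the form in which the hypothesis is phrased, needed to invoke Theorem \ref{thm:Bose}) and ``root of an irreducible polynomial of degree $t$ over $\fqk$'' (the form needed to invoke Theorem \ref{thm:Vfgammarsidon}); this translation is the short field-degree computation above. Nothing new must be proved about $r$-Sidon spaces or $B_r$-sets themselves.
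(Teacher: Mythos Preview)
Your proposal is correct and follows exactly the approach the paper intends: the corollary is stated without proof immediately after Theorem~\ref{thm:Bose}, with the paper simply remarking that it is obtained ``by applying the above results to our constructions of Section~\ref{sec:constr}'', i.e., combining Theorem~\ref{thm:Vfgammarsidon} with Theorem~\ref{thm:Bose}. Your only addition is making explicit the (easy) verification that a primitive element of $\fqn$ has minimal polynomial of degree $t$ over $\fqk$, which is precisely the bookkeeping needed to feed the hypothesis of the corollary into Theorem~\ref{thm:Vfgammarsidon}.
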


\begin{example}
Let $q=3$ and let consider $V\in\mathcal{G}(16,4)$ defined by $f(x)=x^q+\delta x^{q^{k-1}}$ as follows
\[
V:=\lbrace u+\gamma (u^3+\delta u^{27})\colon u\in\F_{3^4}\rbrace
\]
where $\gamma\in\F_{3^{16}}$ is such that $\F_{3^{16}}=\F_{3^4}(\gamma)$ and $\delta\in\fqn\setminus\lbrace 0 \rbrace$. By Theorem \ref{thm:Vfgammarsidon} we know that $V$ is a $3$-Sidon space. By following Theorem \ref{thm:Bose} and by using MAGMA computations, we have constructed a set $T$, obtained by considering all the linear combinations over $\F_q$ of a basis of $V$. Then we constructed the following set $S$ by considering all the exponents of the elements of $T$ with respect to $\gamma$, by taking them modulo $(3^{16}-1)/2=21523360$. Moreover, in order to have $0\in S$ we have subtracted the minimum element of $S$ to any element of $S$, obtaining that, by Theorem \ref{thm:Bose}, the set
\begin{equation}
\begin{aligned}
    S=&\lbrace 0, 264883, 810267, 2455111, 2462280, 3170728, 4135313, 4565718, 5110363, 6191430,\\
    &6840447, 8527892, 8892224, 10182757, 10447118, 10587200, 10636907, 10855603,\\
    &11398308, 11752618, 12064245, 12292344, 12368319, 12443431, 12473660, 13377589,\\
    &14811094, 14890807, 15040932, 15465744, 15806165, 16397984,16566024, 17550439, \\
    &18128170, 18144682, 18904208, 18967185, 20329901, 20791977 \rbrace 
\end{aligned}
\end{equation}
is a $B_3$-set of size $(3^4-1)/2=40$ in $\mathbb{Z}_{(3^{16}-1)/2}=\mathbb{Z}_{21523360}$.
\end{example}

\section*{Acknowledgements}

The research was supported by the project ``COMBINE'' of the University of Campania ``Luigi Vanvitelli'' and was partially supported by the Italian National Group for Algebraic and Geometric Structures and their Applications (GNSAGA - INdAM).


\begin{thebibliography}{8}

\bibitem{BSZ2015}
{\sc C. Bachoc, O. Serra and G. Z\'emor:}
An analogue of Vosper’s theorem for extension fields,
\emph{Math. Proc. Cambridge Philos. Soc.} {\bf 163(3)}, 423--452, 2017.

\bibitem{BSZ2018}
{\sc C. Bachoc, O. Serra and G. Z\'emor:}
Revisiting Kneser's theorem for field extensions,
\emph{Combinatorica} {\bf 38(4)}, 759--777, 2018.

\bibitem{newnew}
{\sc D. Bartoli, G. Longobardi, G. Marino, and M. Timpanella:} Scattered trinomials of $\mathbb {F} _ {q^ 6}[X] $ in even characteristic, arXiv:2307.12829.

\bibitem{BZZ}
{\sc D. Bartoli, C. Zanella and F. Zullo:}, A new family of maximum scattered linear sets in {$\mathrm{PG}(1,q^6)$}, \emph{Ars Math. Contemp.}, {\bf  19(1)}, 125--145, 2020.

\bibitem{bartoli2021conjecture}
{\sc D. Bartoli, B. Csajb\'ok, and M. Montanucci:} On a conjecture about
maximum scattered subspaces of $\mathbb{F}_{q^6}\times\mathbb{F}_{q^6}$, \emph{Linear Algebra Appl.} {\bf 631}, 111–135, 2021.

\bibitem{BEGR}
{\sc E. Ben-Sasson, T. Etzion, A. Gabizon and N. Raviv:}
Subspace polynomials and cyclic subspace codes, 
\emph{IEEE Trans. Inform. Theory} {\bf 62(3)}, 1157--1165, 2016.

\bibitem{blokhuis2000scattered}
{\sc A. Blokhuis, M. Lavrauw:}
Scattered spaces with respect to a spread in $\mathrm{PG}(n,q)$, \emph{Geom. Dedicata}, {\bf{81}}, 231–243, 2000.

\bibitem{MAGMA}
{\sc W. Bosma, J. Cannon and C. Playoust:} 
The Magma algebra system. I. The user language. \emph{J. Symbolic Comput.} {\bf 24(3-4)}, 235-265, 1997. Computational algebra and number theory (London, 1993).

\bibitem{CPSZSidon}
{\sc C. Castello, O. Polverino, P. Santonastaso and F. Zullo:}
Constructions and equivalence of Sidon spaces, \emph{J. Algebr. Comb.}, {\bf 58}, 1299--1329, (2023), \href{https://doi.org/10.1007/s10801-023-01275-x}{https://doi.org/10.1007/s10801-023-01275-x}.

\bibitem{CPZ202x}
{\sc C. Castello, O. Polverino, and F. Zullo:}
Classification of cyclic subspace codes with maximum weight spectrum, manuscript.

\bibitem{GenSS}
{\sc J. Cilleruelo, I. Ruzsa and C. Vinuesa:}
Generalized Sidon sets, \emph{Adv. Math.} {\bf 225(5)}, 2786--2807, 2010.

\bibitem{csajbok2018anewfamily}
{\sc B. Csajb\'ok, G. Marino, O. Polverino and C. Zanella:} 
A new family of
MRD-codes, \emph{Linear Algebra Appl.}, {\bf 548}, 203–220, 2018.

\bibitem{hscatt}
{\sc B. Csajb\'ok, G. Marino, O. Polverino and F. Zullo:}
Generalising the scattered property of subspaces, \emph{Combinatorica}, {\bf 41(2)} (2021), 237--262.

\bibitem{csajbok2018linearset}
{\sc B. Csajb\'ok, G. Marino and F. Zullo:} 
New maximum scattered linear
sets of the projective line, 
\emph{Finite Fields Appl.}, {\bf 54}, 133–150, 2018.


\bibitem{Erdos}
{\sc P. Erd\H{o}s:} 
Solved and unsolved problems in combinatorics and combinatorial number theory, 
\emph{Congr. Numer.} {\bf 32}, 49–62, 1981.

\bibitem{Etzion}
{\sc T. Etzion and A. Vardy:} 
Error-correcting codes in projective space,
\emph{IEEE Trans. Inform. Theory}  {\bf 57(2)}, 1165-1173, 2011.


\bibitem{code2}
{\sc T. Feng and Y. Wang:}
New constructions of large cyclic subspace codes and Sidon spaces, 
\emph{Discrete Math.} {\bf 344(4)}, 112273, 2021.

\bibitem{Heideequiv}
{\sc H. Gluesing-Luerssen and H. Lehmann:}
Automorphism groups and isometries for cyclic orbit codes, 
\emph{Adv. Math. Commun.}, {\bf 17(1)}, 119-138, (2023), \href{https://doi.org/10.3934/amc.2021040}{https://doi.org/10.3934/amc.2021040}.


\bibitem{gow}
{\sc R. Gow and R. Quinlan:} 
Galois theory and linear algebra, \emph{Linear Algebra Appl.} {\bf 430} (2009), 1778--1789.

\bibitem{HouLeungXiang2002}
{\sc X. Hou, K.H. Leung and Q. Xiang:}
A generalization of an addition theorem of Kneser,
\emph{J. Number Theory} {\bf 97}, 1--9, 2002.

\bibitem{newSidonsets}
{\sc G. Johnston, M. Tait and C. Timmons:} 
Upper and lower bounds on the size of $ B_k [g] $ sets, \emph{Australas. J. Combin.}, {\bf 83(1)}, 129-140, 2022.

\bibitem{KoetterK}
{\sc R. Koetter and F. R. Kschischang:} 
Coding for errors and erasures in random network coding, {\em IEEE Trans. Inform. Theory} {\bf 54}, 3579--3591, 2008.

\bibitem{lavrauw2015solution}
{\sc M. Lavrauw, G. Marino, O. Polverino, and R. Trombetti}, Solution to
an isotopism question concerning rank 2 semifields, \emph{J. Combin. Des.}, {\bf 23(2)}, 60–77, 2015.



\bibitem{lidl1997finite}
{\sc R.~Lidl and H.~Niederreiter:}
Finite fields, volume~20 of \emph{Encyclopedia Math. Appl.},
Cambridge University Press, Cambridge, second edition, 1997.

\bibitem{longobardi2021large}
{\sc G. Longobardi, G. Marino, R.Trombetti, Y. Zhou:} A large family of maximum scattered linear sets of ${\mathrm{PG}(1,q^n)}$ and their associated {MRD} codes, \emph{Combinatorica}, {\bf 43}, 681-716, 2023.

\bibitem{longobardizanellascatt}
{\sc G. Longobardi and C. Zanella}, Linear sets and MRD-codes arising
from a class of scattered linearized polynomials, \emph{J. Algebr. Comb.}, {\bf 53},639–661, 2021.



\bibitem{lunardon2001blocking}
{\sc G. Lunardon and O. Polverino:} Blocking sets and derivable partial
spreads, \emph{J. Algebraic Combin.},{\bf 14 }, 49–56, 2021.

\bibitem{MMZ}
{\sc G. Marino, M. Montanucci, and F. Zullo:} MRD-codes arising from the
trinomial ${x^q + x^{q^3}
+ cx^{q^5}\in\fq[x]}$, \emph{Linear Algebra Appl.} {\bf 591}, 99–114, 2020.

\bibitem{montanucci2022class}
{\sc M. Montanucci, C. Zanella:} A class of linear sets in {PG}$(1,q^5)$, \emph{Finite Fields Appl.},{\bf 78}, pp 101983, 2022.



\bibitem{neri2022extending}
{\sc A. Neri, P. Santonastaso, and F. Zullo:}, Extending two families of
maximum rank distance codes, \emph{Finite Fields Appl.}, 81:102045, 2022.

\bibitem{code1}
{\sc Y. Niu, Y. Qin and Y. Wu:}
Several kinds of large cyclic subspace codes via Sidon spaces, 
\emph{Discrete Math.} {\bf 343(5)}, 111788, 2020.

\bibitem{Sidonsurvey}
{\sc K. O’Bryant:} 
A complete annotated bibliography of work related to Sidon sequences,
\emph{Electron. J. Combin.}, 2012.

\bibitem{Otal}
{\sc K. Otal and F. \"Ozbudak:} 
Cyclic subspace codes via subspace polynomials,
\emph{Des. Codes Cryptogr.} {\bf 85(2)}, 191--204, 2017.

\bibitem{code3}
{\sc K. Otal and F. \"Ozbudak:}  Constructions of cyclic subspace codes and maximum rank distance codes, 
\emph{Network Coding and Subspace Designs}, Springer, Cham, 43--66, 2018.


\bibitem{divisible}
{O. Polverino, P. Santonastaso, J. Sheekey and F. Zullo:}
Divisible linear rank metric codes,
\emph{IEEE Trans. Inform. Theory} {\bf 69(7)} (2023), 4528-4536.

\bibitem{polverino2020number}
{\sc O. Polverino and F. Zullo:} On the number of roots of some linearized
polynomials, \emph{ Linear Algebra Appl.} {\bf 601}, 189–218, 2020.

\bibitem{CryptoSidon}
{\sc N. Raviv, B. Langton and I. Tamo:}
Multivariate public key cryptosystem from Sidon spaces, \emph{Public-Key Cryptography–PKC 2021: 24th IACR International Conference on Practice and Theory of Public Key Cryptography}, Virtual Event, May 10–13, 2021, Proceedings, Part I. Cham: Springer International Publishing, 2021.

\bibitem{Roth}
{\sc R. M. Roth, N. Raviv and I. Tamo:}
Construction of Sidon spaces with applications to coding, \emph{IEEE Trans. Inform. Theory} {\bf 64(6)}, 4412--4422, 2018.

\bibitem{SantZullo1}
{\sc P. Santonastaso and F. Zullo:}
Linearized trinomials with maximum kernel,
\emph{J. Pure Appl. Algebra} {\bf 226(3)}, 106842, 2022.

\bibitem{sheekey2016new}
{\sc J. Sheekey: } A new family of linear maximum rank distance codes, \emph{Adv. Math. Commun.}, {\bf 10(3)}, 2016.

\bibitem{singer}
{\sc J. Singer:}
A theorem in finite projective geometry and some applications to number theory, \emph{Trans. Amer. Math. Soc.} {\bf 43} (1938), 377--385.

\bibitem{Trautmann2}
{\sc A.-L. Trautmann:} 
Isometry and automorphisms of constant dimension codes, 
\emph{Adv. Math. Commun.} {\bf 7}, 147--160, 2013.

\bibitem{Trautmann}
{\sc A. L. Trautmann, F. Manganiello, M. Braun and J. Rosenthal:} 
Cyclic orbit codes, 
\emph{IEEE Trans. Inform. Theory} {\bf 59(11)}, 7386--7404, 2013.

\bibitem{wu2013linearized}
  {\sc B. Wu and Z. Liu:} Linearized polynomials over finite fields revisited, \emph{Finite Fields Appl.}, {\bf 22}, 79--100, 2013.

\bibitem{zanella2019condition}
{\sc C. Zanella:} A condition for scattered linearized polynomials involving {D}ickson matrices, \emph{J. Geom.}, {\bf 110(3)}, pp. 50, 2019.

\bibitem{ZZ}
{\sc C. Zanella, F. Zullo:} Vertex properties of maximum scattered linear sets of ${\mathrm{PG}(1,q^n)}$, \emph{Discrete Math.}, {\bf 343(5)}, 2020.

\bibitem{zhang2022constructions}
{\sc H. Zhang, X. Cao:} Constructions of {S}idon spaces and cyclic subspace codes, \emph{Front. Math. China}, {\bf 17(2)}, 275-288, 2022.

\bibitem{zhang2023constructions}
{\sc H. Zhang, C. Tang:} Constructions of large cyclic constant dimension codes via {S}idon spaces, \emph{Des. Codes Cryptogr.}, {\bf 91(1)}, 29--44, 2023.

\bibitem{zhang2023new}
{\sc H. Zhang, C. Tang, X. Hu:} New constructions of Sidon spaces and large cyclic constant dimension codes, \emph{Comput. Appl. Math.}, {\bf 42(5)}, 230, 2023.


\bibitem{zhang2023further}{\sc H. Zhang, C. Tang:} Further constructions of large cyclic subspace codes via {S}idon spaces, \emph{Linear Algebra Appl.}, {\bf 661}, 106--115, 2023.


\bibitem{Zullo}
{\sc F. Zullo:}
Multi--orbit cyclic subspace codes and linear sets, \emph{Finite Fields Appl.} {\bf 87}, 102153, 2023.

\end{thebibliography}
\end{document}